\documentclass{amsart}
\usepackage{amssymb,amsthm,amstext,amsmath,amscd,latexsym,amsfonts,amscd}
\usepackage[usenames]{color}
\usepackage{comment}
\usepackage{enumerate}
\usepackage{braket}
\usepackage[all]{xy}
\usepackage{ulem}%取り消し線
\usepackage{soul}
\definecolor{mildyellow}{cmyk}{0, 0, 0.5,0}
\sethlcolor{mildyellow}

%%%%%%%%%%%%%%%%%%%%%
%%%%%%%%%%%%%%%%%%%%%
\newtheorem{theorem}{Theorem}[section]
\newtheorem{proposition}[theorem]{Proposition}
\newtheorem{lemma}[theorem]{Lemma}
\newtheorem{corollary}[theorem]{Corollary}
\theoremstyle{definition}
\newtheorem{definition}[theorem]{Definition}
\newtheorem{remark}[theorem]{Remark}

\newtheorem{example}[theorem]{Example}
\theoremstyle{plain}

%%%%%%%%%%%%%%%%%%%%%
%%%%%%%%%%%%%%%%%%%%%
\newcommand{\disp}{\displaystyle}
\newcommand{\Loc}{\mathop{\mathrm{Loc}}\nolimits}
\newcommand{\id}{\mathop{\mathrm{id}}\nolimits}
\newcommand{\Ker}{\mathop{\mathrm{Ker}}\nolimits}

\newcommand{\Ext}{\mathop{\mathrm{Ext}}\nolimits}
\newcommand{\Spec}{\mathop{\mathrm{Spec}}\nolimits}
\newcommand{\Supp}{\mathop{\mathrm{Supp}}\nolimits}
\newcommand{\supp}{\mathop{\mathrm{supp}}\nolimits}

\newcommand{\depth}{\mathop{\mathrm{depth}}\nolimits}
\newcommand{\Hom}{\mathop{\mathrm{Hom}}\nolimits}
\newcommand{\RHom}{\mathop{\mathrm{RHom}}\nolimits}
\newcommand{\RGamma}{\mathop{\mathrm{R\Gamma}}\nolimits}
\newcommand{\LLambda}{\mathop{\mathrm{L\Lambda}}\nolimits}
\newcommand{\Mod}{\mathop{\mathrm{Mod}}\nolimits}
\newcommand{\Inj}{\mathop{\mathrm{Inj}}\nolimits}

\newcommand{\Image}{\mathop{\mathrm{Im}}\nolimits}

\newcommand{\Lotimes}{\mathop{\otimes_R^{\rm L}}\nolimits}

\newcommand{\fa}{\mathfrak{a}}

\newcommand{\fp}{\mathfrak{p}}

\newcommand{\fq}{\mathfrak{q}}
\newcommand{\fm}{\mathfrak{m}}
\newcommand{\cD}{\mathcal{D}}
\newcommand{\cL}{\mathcal{L}}

\newcommand{\cK}{\mathcal{K}}

\newcommand{\Z}{\mathbb{Z}}

\begin{document}

\title[A Local duality principle]{A Local duality principle in derived categories of commutative Noetherian rings}
\author[T. Nakamura]{Tsutomu Nakamura}
\address[T. Nakamura]{Graduate School of Natural Science and Technology Okayama University,
Okayama, 700-8530 Japan}
\email{t.nakamura@s.okayama-u.ac.jp}
\author[Y. Yoshino]{Yuji Yoshino}
\address[Y. Yoshino]{Graduate School of Natural Science and Technology Okayama University,
Okayama, 700-8530 Japan}
\email{yoshino@math.okayama-u.ac.jp}
\keywords{local cohomology, colocalization, localizing subcategory}
\subjclass[2010]{13D09, 13D45, 14B15}
\date{}
\begin{abstract}Let $R$ be a commutative Noetherian ring.
We introduce the notion of colocalization functors $\gamma_W$ with supports in arbitrary subsets $W$ of $\Spec R$. 
If W is a specialization-closed subset, then $\gamma_W$ coincides with the right derived functor $\RGamma_W$ of the section functor $\Gamma_W$ with support in $W$.
We prove that the local duality theorem and the vanishing theorem of Grothendieck type hold for $\gamma_W$ with $W$ being an arbitrary subset.
\end{abstract}
\maketitle

%%%%%%%%%%%%%%%%%%%%%%%%%%
%%%%%%%%%%%%%%%%%%%%%%%%%%
%%%%%%%%%%%%%%%%%%%%%%%%%%
%%%%%%%%%%%%%%%%%%%%%%%%%%
%%%%%%%%%%%%%%%%%%%%%%%%%%
%%%%%%%%%%%%%%%%%%%%%%%%%%
%%%%%%%%%%%%%%%%%%%%%%%%%%
%%%%%%%%%%%%%%%%%%%%%%%%%%

\section{Introduction}
\label{1}
Throughout this paper, we assume that $R$ is a commutative Noetherian ring.
We denote by $\cD=D(\Mod R)$ the derived category of complexes of $R$-modules, 
by which we mean that $\cD$ is 
the unbounded derived category. 
Neeman $\text{\cite{N}}$ proved that there is a natural one-one correspondence between the set of subsets of $\Spec R$ and the set of localizing subcategories  of $\cD$. 
We denote by $\cL_W$ the localizing subcategory  corresponding to a subset  $W$  of  $\Spec R$. 
The localization theory of triangulated categories $\text{\cite{K}}$ yields a right adjoint $\gamma_W$  to the inclusion functor $\cL_W \hookrightarrow \cD$, and such an adjoint is unique.
This functor  $\gamma_W : \cD  \to \cL_W (\hookrightarrow \cD)$ is our main target of this paper, and
we call it {\it the colocalization functor with support in  $W$}. 

If $V$ is a specialization-closed subset of $\Spec R$, then $\gamma_{V}$ is nothing but the right derived functor $\mathrm{R}\Gamma_{V}$ of the section functor  $\Gamma_V$ with support in $V$, whose $i$th right derived functor $H^i_V(-)=H^i(\mathrm{R}\Gamma_{V}(-))$ is known as the $i$th local cohomology functor. 
For a general subset $W$  of  $\Spec R$, the colocalization functor $\gamma _W$ is not necessarily a right derived functor of an additive functor defined on 
the category $\Mod R$ of $R$-modules. 

In this paper, we establish several results concerning the colocalization functor $\gamma_W$, 
where $W$ is an arbitrary subset of $\Spec R$. 
Notable are extensions of the local duality theorem and Grothendieck type vanishing theorem of local cohomology.
The local duality can be viewed as an isomorphism
$$\RGamma_{V} \RHom_R(X,Y)\cong  \RHom_R(X, \RGamma_{V}Y),$$
where $V$ is a specialization-closed subset of $\Spec R$, $X\in \cD^{-}_{\rm fg}$ and $Y\in \cD^{+}$; see \cite[Proposition 6.1]{F}. 
The following theorem generalizes this isomorphism to the case of colocalization functors $\gamma_W$.

\begin{theorem}[Theorem \ref{LD Principle}]\label{LDP Intro}
Let $W$ be a subset of $\Spec R$ and let  $X, Y \in \cD$.
We denote by $\dim W$ the supremum of the lengths of chains of prime ideals in $W$.
Suppose that one of the following conditions holds: 
\begin{enumerate}
\item[$(1)$] 
$X\in \cD^-_{\rm fg}$, $Y \in \cD^+$ and $\dim W$ is finite;
\item[$(2)$] $X\in \cD_{\rm fg}$, $Y$ is a bounded complex of injective $R$-modules and $\dim W$ is finite;
\item[$(3)$] 
$W$ is generalization-closed.
\end{enumerate}
Then there exist a natural isomorphism
$$
\gamma_W \RHom_R(X, Y)\cong \RHom_R(X,\gamma_W Y).$$
\end{theorem}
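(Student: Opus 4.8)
The plan is to build the natural transformation first and then check it is an isomorphism in each of the three cases by reducing to situations already understood. There is an obvious candidate for the map: since $\gamma_W$ is a colocalization functor (right adjoint to $\cL_W\hookrightarrow\cD$), there is a natural counit $\gamma_W Y\to Y$; applying $\RHom_R(X,-)$ gives $\RHom_R(X,\gamma_W Y)\to\RHom_R(X,Y)$. The key point is that $\RHom_R(X,\gamma_W Y)$ should lie in $\cL_W$ — equivalently, $\gamma_W$ preserves this object up to the counit — so that this map factors uniquely through $\gamma_W\RHom_R(X,Y)\to\RHom_R(X,Y)$, yielding the comparison morphism $\gamma_W\RHom_R(X,Y)\to\RHom_R(X,\gamma_W Y)$. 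To see $\RHom_R(X,\cL_W)\subseteq\cL_W$ when $X\in\cD_{\rm fg}$, I would use Neeman's classification: $\cL_W$ is generated by the residue fields $k(\fp)$, $\fp\in W$ (or, for specialization-closed pieces, by the Koszul/Čech objects), and $\RHom_R(X,k(\fp))$ is built from shifts of $k(\fp)$ because $X$ is finitely generated — its Ext-modules over $R_\fp$ are finite-dimensional $k(\fp)$-vector spaces. A limit/colimit argument then promotes this from the generators to all of $\cL_W$. In case (3), where $W$ is generalization-closed, $\gamma_W$ localizes rather than "sections" (it is a composite of localizations at the primes of $W$), and the statement is essentially the commutation of $\RHom_R(X,-)$ with localization, which needs no finiteness on $X$.

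After the morphism is constructed, checking it is an isomorphism is a local, pointwise question: by Neeman's correspondence a morphism in $\cD$ is an isomorphism iff it becomes one after applying $\gamma_{\{\fp\}}$ (colocalization at a point), for every $\fp\in\Spec R$, and for these residue-field–type tests one can pass to the complete local ring $\widehat{R_\fp}$. The strategy is to stratify $W$. The crucial decomposition is $W = V\cap U$ where $V=\overline{W}$ is the specialization closure (a specialization-closed set) and $U$ is generalization-closed, so that $\gamma_W\cong\gamma_V\gamma_U\cong\gamma_U\gamma_V$ — this kind of factorization of colocalization functors along such a decomposition should be available from the earlier sections. Granting it, case (1)/(2) reduces to the already-known specialization-closed local duality isomorphism $\RGamma_V\RHom_R(X,Y)\cong\RHom_R(X,\RGamma_V Y)$ of \cite[Proposition 6.1]{F} (that is where $X\in\cD^-_{\rm fg}$, $Y\in\cD^+$ or the bounded-injective hypothesis enters, to control boundedness and finiteness under $\RHom$), composed with the generalization-closed case (3) applied to $U$. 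The finiteness of $\dim W$ is used to guarantee that $\RGamma_V$ has finite cohomological dimension on the relevant subcategory (Grothendieck vanishing), so that $\RGamma_V$ of a bounded-below complex stays bounded below and the iterated functors are well-behaved; this is exactly the hypothesis that fails to be needed in case (3).

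For case (3) itself I would argue directly. A generalization-closed $W$ can be written as an increasing union of sets of the form $\{\fq : \fq\subseteq\fp\}=\Spec R_\fp$-images, and $\gamma_W$ on a complex is computed as a suitable homotopy colimit of localizations $(-)_\fp$; since $\RHom_R(X,-)$ commutes with filtered homotopy colimits when $X\in\cD^{-}_{\rm fg}$ — and, more to the point, with localization at a multiplicatively closed set for arbitrary $X$ because $R\to R_\fp$ is flat — one gets $\gamma_W\RHom_R(X,Y)\cong\RHom_R(X,-)$ applied to the same colimit, which is $\RHom_R(X,\gamma_W Y)$. Here essentially no finiteness is required, which is why (3) is stated without the $\dim W<\infty$ hypothesis; the only subtlety is identifying $\gamma_W$ with the expected (co)localization, which should follow from the uniqueness of right adjoints together with the explicit description of $\cL_W$ for generalization-closed $W$.

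I expect the main obstacle to be the stability statement $\RHom_R(\cD_{\rm fg},\cL_W)\subseteq\cL_W$ together with the clean factorization $\gamma_W\cong\gamma_V\gamma_U$: once those two structural facts are in hand, the three cases fall out by combining the known specialization-closed duality with the flat-base-change behaviour of $\RHom$ under localization. Controlling boundedness when one composes $\RGamma_V$ with $\RHom_R(X,-)$ — making sure the hypotheses in (1) and (2) are exactly strong enough to keep everything in $\cD^{+}$ or $\cD^{-}_{\rm fg}$ as needed so that \cite[Proposition 6.1]{F} applies — is the routine but delicate bookkeeping that the finiteness of $\dim W$ is designed to handle.
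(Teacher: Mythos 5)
Your overall frame---apply $\RHom_R(X,-)$ to the colocalization triangle for $Y$ and verify that $\RHom_R(X,\gamma_WY)$ lands in $\cL_W$ (the companion fact $\RHom_R(X,\lambda_WY)\in\cL_W^\perp$ is formal)---is the right one and matches the paper. But the way you propose to prove the key membership $\RHom_R(X,\gamma_WY)\in\cL_W$ has a genuine gap. You want to check it on the generators $\kappa(\fp)$, $\fp\in W$, and then ``promote'' to all of $\cL_W$ by a limit/colimit argument. The subcategory $\{Z\in\cL_W : \RHom_R(X,Z)\in\cL_W\}$ is triangulated, but it is \emph{not} closed under arbitrary direct sums unless $X$ is a perfect complex, since $\RHom_R(X,-)$ does not commute with coproducts for general $X\in\cD^-_{\rm fg}$. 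So it need not contain $\Loc\{\kappa(\fp)\mid\fp\in W\}=\cL_W$, and indeed the statement $\RHom_R(X,\cL_W)\subseteq\cL_W$ is false without hypotheses --- that is exactly why (1)--(3) are there. The paper instead proves $\supp\RHom_R(X,\gamma_WY)\subseteq W$ pointwise: for $\fp\in W^c$ one rewrites $\RHom_R(X,\gamma_WY)\Lotimes\kappa(\fp)$ as $\RHom_{R_\fp}(X_\fp,(\gamma_WY)_\fp)\otimes^{\rm L}_{R_\fp}\kappa(\fp)$ --- valid because $X\in\cD^-_{\rm fg}$ and $\gamma_WY\in\cD^+$ --- and then applies the Foxby--Iyengar lemma twice, with an adjunction swap in between, to reduce everything to $\fp\notin\supp\gamma_WY$. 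The finiteness of $\dim W$ is used precisely to guarantee $\gamma_WY\in\cD^+$ (Corollary \ref{[a,b]}), not, as you suggest, to bound the cohomological dimension of $\RGamma_V$; and case (2) is handled by truncating $X$ and reducing to case (1), not by a separate mechanism.

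Two further steps of your plan would fail as stated. First, the decomposition $W=V\cap U$ with $V=\overline{W}^s$ and $U$ generalization-closed does not exist for a general $W$ (take $W=\{\fp_0,\fm\}$ with $\fp_0\subsetneq\fp\subsetneq\fm$: any generalization-closed $U\ni\fm$ contains $\fp$, so $V\cap U\neq W$), and even when available the commutation $\gamma_V\gamma_U\cong\gamma_U\gamma_V$ is delicate --- Remark \ref{inclusion}(ii) warns that colocalization functors do not commute in general. The paper's actual structural tool is the triangle $\gamma_{W_0}\to\gamma_W\to\gamma_{W_1}\lambda_{W_0}$ of Theorem \ref{theorem1}, and it is used only to prove the boundedness of $\gamma_WY$, not to reduce the duality to the specialization-closed case. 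Second, in case (3) the functor $\gamma_W$ for generalization-closed $W$ is \emph{not} a (homotopy colimit of) localization(s): $\gamma_{U(\fp)}\cong\RHom_R(R_\fp,-)$, not $(-)_\fp$; and commuting $\RHom_R(X,-)$ past $(-)_\fp$ would itself require $X\in\cD^-_{\rm fg}$, defeating the point that (3) carries no finiteness hypotheses. The correct argument is purely formal: $\cL_W=\cL_{W^c}^\perp$ for generalization-closed $W$ (Lemma \ref{localizing and colocalizing}), and $\RHom_R(X,-)$ preserves right orthogonals of localizing subcategories by the swap $\RHom_R(Y',\RHom_R(X,Z))\cong\RHom_R(X,\RHom_R(Y',Z))$.
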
\vspace{2mm}

We shall call Theorem \ref{LDP Intro}  the {\it Local Duality Principle},  which naturally implies the following corollary. 
 
\begin{corollary}[Corollary \ref{LDT}]
\label{LDT Intro}
Assume that $R$ admits a dualizing complex $D_R$.
Let $W$ be an arbitrary subset of $\Spec R$ and 
$X\in \cD_{\rm fg}$. We write $X^\dagger =\RHom_R(X,D_R)$.
Then we have a natural isomorphism 
$$\gamma_WX\cong \RHom_R(X^\dagger, \gamma_WD_R).$$
\end{corollary}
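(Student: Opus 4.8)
The plan is to obtain the isomorphism as an immediate consequence of the Local Duality Principle (Theorem~\ref{LDP Intro}), applied to the dual complex $X^\dagger$ rather than to $X$ itself, combined with the biduality property of the dualizing complex $D_R$.

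First I would check that the hypotheses of Theorem~\ref{LDP Intro}(2) are available here for an \emph{arbitrary} subset $W$. Since $R$ admits a dualizing complex, $R$ has finite Krull dimension; hence $\dim W \le \dim R < \infty$ for every $W \subseteq \Spec R$, so the finiteness condition in clause $(2)$ is automatic. We may also take $D_R$ to be a bounded complex of injective $R$-modules, as is standard for a dualizing complex. Finally, because $X \in \cD_{\rm fg}$ and $D_R$ is dualizing, the dual $X^\dagger = \RHom_R(X, D_R)$ again has finitely generated cohomology, so $X^\dagger \in \cD_{\rm fg}$, and the canonical biduality morphism $X \to \RHom_R(X^\dagger, D_R) = (X^\dagger)^\dagger$ is an isomorphism, natural in $X$.

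Granting this, the proof is a short computation. Applying Theorem~\ref{LDP Intro}(2) with $X^\dagger$ in place of $X$ and with $Y = D_R$ yields a natural isomorphism
$$\gamma_W \RHom_R(X^\dagger, D_R) \;\cong\; \RHom_R\bigl(X^\dagger,\, \gamma_W D_R\bigr).$$
By biduality the left-hand side is naturally isomorphic to $\gamma_W X$, and composing the two natural isomorphisms gives $\gamma_W X \cong \RHom_R(X^\dagger, \gamma_W D_R)$, with naturality in $X$ inherited from the two isomorphisms used.

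Thus there is no genuine obstacle beyond bookkeeping: the work lies entirely in checking that the arbitrary subset $W$ in the statement still satisfies the hypotheses of the Principle — which it does, precisely because the existence of $D_R$ forces $\dim R$, and hence $\dim W$, to be finite — and in recalling the classical facts that $\RHom_R(-,D_R)$ preserves $\cD_{\rm fg}$ and that biduality holds on it. Once these are in place, the corollary follows at once.
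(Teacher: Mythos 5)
Your proposal is correct and is essentially the argument the paper intends: apply the Local Duality Principle in case (2) to $X^\dagger$ and $Y=D_R$ (legitimate since a ring with a dualizing complex has finite Krull dimension, so $\dim W<+\infty$ automatically, and $D_R$ may be taken to be a bounded complex of injectives), then invoke biduality $X\cong X^{\dagger\dagger}$ on $\cD_{\rm fg}$. No gaps.
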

\vspace{2mm}

The local duality theorem states the validity of this isomorphism in the case that $W$ is specialization-closed, see $\text{\cite[Chapter V; Theorem 6.2]{Ha}}$ and $\text{\cite[Corollary 6.2]{F}}$.

As an application of the Local Duality Principle, we can prove the vanishing theorem of Grothendieck type for the colocalization functor $\gamma_W$ with support in an arbitrary subset $W$.
Let $\fa$ be an ideal of $R$ and $X\in \cD$.
The $\fa$-depth of $X$, which we denote by $\depth(\fa, X)$, is  
the infimum of the set $\Set{i \in \mathbb{Z}| \Ext^i_R(R/\fa,X)\neq 0}$.
More generally, for a specialization-closed subset $W$, the $W$-depth of $X$, which we denote by $\depth (W, X)$, is defined as the infimum of the set of values $\depth(\fa, X)$ for all ideals $\fa$ with $V(\fa)\subseteq W$.
When $X\in \cD_{\rm fg}$, we denote by $\dim X$ the supremum of the set $\Set{\dim H^i(X)+i| i\in \mathbb{Z}}$.

For a finitely generated $R$-module $M$, the Grothendieck vanishing theorem says that the $i$th local cohomology module $H_W ^i (M)=H^i(\RGamma_WM)$ of $M$ with support in $W$ is zero for $i<\depth (W, M)$ and $i>\dim M$.
 
We are able to generalize this theorem to the following result in \S 6. 

\begin{theorem}[Theorem \ref{GVT}]\label{GVT Intro}
Assume that $R$ admits a dualizing complex. 
Let $W$ be an arbitrary subset of $\Spec R$ with the specialization closure $\overline{W}^s$.
If $X\in \cD_{\rm fg}$, then $ H^i(\gamma_WX)=0$ unless $\depth(\overline{W}^s, X) \leq i \leq \dim X$.
\label{GVT Intro}
\end{theorem}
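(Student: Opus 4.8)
The plan is to deduce this from the Local Duality Principle. Since $R$ has a dualizing complex $D_R$, Corollary \ref{LDT Intro} yields a natural isomorphism $\gamma_W X\cong\RHom_R(X^\dagger,\gamma_W D_R)$, where $X^\dagger=\RHom_R(X,D_R)$; replacing $X$ by a bounded complex (if $\dim X=\infty$ the upper bound is vacuous, and if $\depth(\overline W^s,X)=-\infty$ the lower bound is vacuous, so we may assume $X\in\cD^b_{\rm fg}$), $X^\dagger$ is bounded as well. One then estimates the amplitude of $\RHom_R(X^\dagger,\gamma_W D_R)$ by combining the amplitudes of $X^\dagger$ and of $\gamma_W D_R$ through the standard inequalities $\inf\RHom_R(A,B)\ge\inf B-\sup A$ and $\sup\RHom_R(A,B)\le\sup B-\inf A$, which are available here because $X^\dagger$ is bounded and $\gamma_W D_R$ is represented by a complex of injective modules.

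For the bound on the high side, I would use the standard equality $\inf X^\dagger=-\dim X$ (Grothendieck nonvanishing plus the structure of the minimal injective resolution $I^\bullet$ of $D_R$, whose $n$-th term is $\bigoplus_{\dim R/\fp=-n}E_R(R/\fp)$, so that $I^\bullet$ sits in cohomological degrees $\le0$), together with $\sup\gamma_W D_R\le0$. For the latter: from $\cL_W\subseteq\cL_{\overline W^s}$ one gets $\gamma_W\cong\gamma_W\circ\RGamma_{\overline W^s}$, hence $\gamma_W D_R\cong\gamma_W(\Gamma_{\overline W^s}I^\bullet)$, and $\Gamma_{\overline W^s}I^\bullet$ is a subcomplex of $I^\bullet$, again concentrated in degrees $\le0$; granting that $\gamma_W$ does not increase $\sup$ on complexes of this type (see below), $\sup\gamma_W D_R\le0$, and therefore $\sup\gamma_W X\le\sup\gamma_W D_R-\inf X^\dagger\le\dim X$.

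For the bound on the low side the crude $\RHom$-estimate is too weak, so I would instead use the companion isomorphism $\gamma_W X\cong\gamma_W\RGamma_{\overline W^s}X$ (same inclusion of localizing subcategories). Here $\inf\RGamma_{\overline W^s}X=\depth(\overline W^s,X)$ by the usual depth-sensitivity of local cohomology, so it remains to verify that $\gamma_W$ does not decrease $\inf$ on complexes lying in $\cL_{\overline W^s}$. (One can also re-prove the high-side bound this way, using classical Grothendieck vanishing $\sup\RGamma_{\overline W^s}X\le\dim X$, extended from modules to $X\in\cD^b_{\rm fg}$ by truncation and a hyperext spectral sequence.)

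The main obstacle is exactly the amplitude behaviour of $\gamma_W$: in contrast with $\RGamma_{\overline W^s}$, the functor $\gamma_W$ is not exact and does not act termwise on a complex of injectives---$\gamma_W E_R(R/\fp)$ is in general nonzero when $\fp\notin W$ (already when $W$ meets $\{\fq\mid\fq\subsetneq\fp\}$), so one cannot simply discard summands. The key lemma I would isolate is that, nonetheless, $\gamma_W E_R(R/\fp)$ is concentrated in cohomological degree $0$ for every prime $\fp$: it is $E_R(R/\fp)$ when $\fp\in W$, and otherwise it is computed by a Matlis-duality argument over $R_\fp$---writing $E_R(R/\fp)\cong\RGamma_{\fp R_\fp}(D_{R_\fp})$ and invoking the Local Duality Principle, in particular its generalization-closed case $(3)$. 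Granting this lemma, a dévissage along the minimal injective resolution (propagated through stupid truncations, the localization triangles attached to a stratification of $W$ furnished by Neeman's classification and the localization theory recalled in the introduction, and the requisite compatibility with possibly infinite coproducts of injective modules) shows that $\gamma_W$ preserves both $\inf$ and $\sup$ on bounded complexes of injectives supported in $\overline W^s$---which is what the two bounds above require. I expect the bookkeeping in this dévissage, rather than any single conceptual difficulty, to be the technical heart of the proof.
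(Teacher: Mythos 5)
Your upper bound is essentially the paper's argument: Corollary \ref{LDT} gives $\gamma_WX\cong\RHom_R(X^\dagger,\gamma_WD_R)$, the bound $\sup\gamma_WD_R\le\sup D_R$ is exactly Corollary \ref{[a,b]} (which the paper proves by induction on $\dim W$ via Theorems \ref{dimW=0 2} and \ref{theorem1}, not by a termwise d\'evissage), and combining this with $\inf X^\dagger\ge -\dim X$ (in your normalization) finishes that half. That part is fine.

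The lower bound, however, has a genuine gap, and it sits precisely at your ``key lemma'': the claim that $\gamma_WE_R(R/\fp)$ is concentrated in cohomological degree $0$ for every prime $\fp$ is \emph{false}, and the paper's Example \ref{negative cohomology} is a direct counterexample. Take $(R,\fm,k)$ local with $d=\dim R>1$ and $W=V(\fm)^c$ (which is even generalization-closed, so case (3) of the Local Duality Principle applies and still does not save the claim): then $\lambda_WE_R(k)\cong\LLambda^{V(\fm)}E_R(k)\cong D_{\widehat R}[d]$, so the triangle $\gamma_WE_R(k)\to E_R(k)\to\lambda_WE_R(k)$ forces $H^{-d+1}(\gamma_WE_R(k))\neq0$. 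Consequently your asserted conclusion of the d\'evissage --- that $\gamma_W$ preserves $\inf$ on bounded complexes of injectives supported in $\overline W^s$ --- is also false (here $\overline W{}^s=\Spec R$ and $E_R(k)$ is such a complex), and likewise the intermediate statement that $\gamma_W$ does not decrease $\inf$ on objects of $\cL_{\overline W^s}$ fails. Any reduction of the lower bound to the behaviour of $\gamma_W$ on single injective modules is therefore doomed: the inequality $\depth(\overline W^s,X)\le\inf\gamma_WX$ genuinely requires the finite generation of the cohomology of $X$. This is the actual heart of the theorem. The paper handles it in Proposition \ref{negative vanishing}: one first proves $H^i(\gamma_WM)=0$ for $i<0$ and $M$ finitely generated, by a filtration reducing to $M=R/\fp$, choosing $\fp$ maximal among counterexamples, showing the lowest nonvanishing cohomology $H^\ell(\gamma_WR/\fp)$ is a $\kappa(\fp)$-vector space, deducing $\fp\in W$ from the shape of the minimal injective resolution, and reaching a contradiction with $\Ext^\ell_R(\kappa(\fp),R/\fp)=0$ for $\ell<0$. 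Proposition \ref{depth corollary} then upgrades this to the depth bound using Koszul complexes, via $\depth(\fa,X)=\inf\Hom_R(K(\boldsymbol{x}),X)$ and Corollary \ref{negative vanishing 2} applied to $\Hom_R(K(\boldsymbol{x}),X)\in\cD_{\rm fg}$. You would need to replace your d\'evissage by an argument of this kind.
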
\vspace{2mm}

%%%%%%%%%%%%%%%%%%%%%%%
%%%%%%%%%%%%%%%%%%%%%%%

%%%%%%%%%%%%%%%%%%%%%%%
%%%%%%%%%%%%%%%%%%%%%%%

In \S 3,  we give an explicit description of $\gamma _W$  for subsets $W$ of certain special type, see Theorem \ref{dimW=0 2}. 
For example, if  $W$  is a one-point set $\{ \fp\}$, then it is proved the colocalization functor  $\gamma _{\{\fp\}}$  equals $\RGamma _{V(\fp)}\RHom _R (R_{\fp},-)$, see Corollary \ref{gamma p}.
This is one of the rare cases that we know the explicit form of $\gamma _W$, while for a general subset $W$ we give in Theorem \ref{theorem1} the way how we calculate $\gamma _W$ by the induction on $\dim W$.

In \S 4, we give a complete proof of the Local Duality Principle (Theorem \ref{LDP Intro}).

The subsequent section  \S 5 is devoted to the relationship between $\gamma_W$ and left derived functors of  completion functors. 
In particular, we see that there is a subset  $W$ such that $H^i(\gamma_WI)\neq 0$ for an injective module $I$ and some $i<0$.
This observation shows that  $\gamma_W$ is not a right derived functor of an additive functor defined on $\Mod R$ in general.

In the last section \S 6, we present a precise and complete proof for Theorem \ref{GVT Intro} above.

\color{black}
%%%%%%%%%%%%%%%%%%%%%%%%%%
\vspace{6pt}
{Acknowledgements.}  
This work was finished during the first author's visit to the Department of Mathematics at the University of Utah supported by a research grant from Research Institute for Interdisciplinary Science at Okayama University. We are grateful to Srikanth Iyengar for his helpful comments, which improve many parts of this paper.
We would also like to thank the referee for his/her careful reading.
The second author was supported by JSPS Grant-in-Aid for Scientific Research 26287008.

%%%%%%%%%%%%%%%%%%%%%%%%%%
%%%%%%%%%%%%%%%%%%%%%%%%%%
%%%%%%%%%%%%%%%%%%%%%%%%%%
%%%%%%%%%%%%%%%%%%%%%%%%%%
%%%%%%%%%%%%%%%%%%%%%%%%%%
%%%%%%%%%%%%%%%%%%%%%%%%%%

\color{black}
\section{Colocalization functors}\label{2}

In this section, we summarize some notion and basic facts used later in this paper. 
As in the introduction, $R$  denotes a commutative Noetherian ring and 
we work in the derived category  $\cD=D(\Mod R)$. 
Note that complexes $X$ are cohomologically indexed;
$$X=(\cdots \rightarrow X^{i-1}\rightarrow X^i\rightarrow X^{i+1}\rightarrow\cdots).$$

%\tr{\sout{We denote by $\cD^+$ (resp. $\cD^-$, $\cD^{\rm b}$) the full subcategory of $\cD$ consisting of left bounded (resp. right bounded, bounded) complexes.}}
We denote by $\cD^+$ (resp. $\cD^-$) the full subcategory of $\cD$ consisting of complexes $X$ such that $H^i(X)=0$ for $i\ll 0$ (resp. $i\gg 0$).
We write $\cD_{\rm fg}$ for the full subcategory of $\cD$ consisting of complexes with finitely generated cohomology modules.
Furthermore we write $\cD_{\rm fg}^{-}=\cD^-\cap \cD_{\rm fg}$.

For a complex $X$ in $\cD$, the (small) support of  $X$  is a subset of $\Spec R$ defined as 
$$
\supp X= \Set{ \fp \in \Spec R | X\Lotimes \kappa(\fp)\neq 0}, 
$$ 
where  
$\kappa(\fp)=R_\fp/\fp R_\fp$. 
It is well-known that for $X\in \cD$, $\supp X\neq \emptyset$ if and only if $X\neq 0$, see \cite[Lemma 2.6]{F} or \cite[Lemma 2.12]{N}. 
In order to compare with the ordinary support, recall that the (big) support  $\Supp X$  is the set of primes $\fp$  of $R$ satisfying  $X_{\fp} \neq 0$ in  $\cD$. 
In general, we have  $\supp X  \subseteq \Supp X$ and equality holds if $X \in \cD_{\rm fg}^-$, see \cite[p.\ 158]{F}.
 
A full subcategory $\cL$ of $\cD$ is said to be localizing  if $\cL$ is triangulated and  closed under arbitrary direct sums.
If we are given a subset $W$ of  $\Spec R$, then, since the tensor product commutes with taking direct sums, it is easy to see that 
the full subcategory  $\cL_W=\Set{X\in \cD | \supp X\subseteq W}$ is localizing.
A theorem of Neeman \cite[Theorem 2.8]{N} ensures that any localizing subcategory  of $\cD$ is obtained in this way from a subset $W$ of $\Spec R$.

If  $A$ is a set of objects in  $\cD$, then $\Loc A$ denotes the smallest localizing subcategory of  $\cD$  containing all objects of  $A$. 
We write $E_R(R/\fp)$ for the injective envelope of the $R$-module $R/\fp$  for  $\fp \in \Spec R$.
It is easy to see $\supp \kappa(\fp)=\supp E_R(R/\fp)=\{\fp\}$.
Moreover, Neeman \cite[Theorem 2.8]{N} shows the equalities 
$$
\cL_W =\Loc\Set{\kappa(\fp)|\fp\in W}
 =\Loc\Set{E_R(R/\fp)|\fp\in W}.
$$

For a localizing subcategory  $\cL$  of  $\cD$, its right orthogonal subcategory is defined as 
$$
\cL^\perp = \Set{Y \in \cD | \Hom_\cD(X,Y)=0\ \text{for all}\ X \in \cL}.
$$
Note that $\cL^\perp$ is a triangulated subcategory of $\cD$ that is closed under arbitrary direct products. 
In other words, $\cL ^\perp$ is a colocalizing subcategory of $\cD$.
The following equalities hold for any subset $W$ of $\Spec R$; 
$$
\begin{array}{ll}
\cL_W^\perp &= \Set{Y \in \cD | \Hom_\cD(\kappa(\fp)[i],Y)=0\ \text{for all}\ \fp \in W \ \text{and}\ i \in \Z} \vspace{3pt}\\
 &= \Set{Y \in \cD | \RHom_R(\kappa(\fp),Y)=0\ \text{for all}\ \fp \in W}. 
\end{array}
$$

Let  $W$ be an arbitrary subset of $\Spec R$. 
We denote by $i_W$ (resp. $j_W$) the natural inclusion functor $\cL_W\hookrightarrow \cD$ (resp. $\cL_W^\perp\hookrightarrow \cD$).
Then there exist a couple of adjoint pairs $(i_W, \gamma_W)$ and $(\lambda_W, j_W)$ as it is indicated in the following diagram:
$$
\xymatrix{
\cL_W\ \ar@<3.5pt>[rr]^*{i_W}&&\ar@<3.5pt>[ll]^*{\gamma_W}\ \cD\ \ar@<3.5pt>[rr]^*{\lambda_W}&&\ar@<3.5pt>[ll]^*{j_W}\ \cL_W^\perp}
$$
\color{red}
\color{black}
Moreover, it holds that $\Ker \gamma_W=\cL_W^\perp$ and $\Ker \lambda_W=\cL_W$.
See $\text{\cite[\S4.9, \S 5.1, \S 7.2]{K}}$ for details. See also Remark $\text{\ref{alternative proof}}$ (ii).

In the following lemma, we identify $\gamma_W$ and $\lambda_W$ with $i_W\cdot \gamma_W$ and $j_W\cdot \lambda_W$ respectively.

\begin{lemma} \label{LC-like}
Let  $W$  be any subset of  $\Spec R$. 
For any object $X$ of $\cD$, there is a triangle
$$
\begin{CD}
\gamma_W X @>>> X @>>> \lambda_W X @>>> \gamma _W X[1],
\end{CD}
$$
where $\gamma_W X\to X$ and $X\to \lambda_W X$ are the natural morphisms. 
Furthermore,
if 
$$
\begin{CD}
X' @>>> X @>>> X''@>>> X'[1]
\end{CD}
$$
is a triangle with $X' \in \cL_W$ and  $X'' \in \cL_W^{\perp}$, then there exist unique isomorphisms $a:\gamma _W X\to X'$ and  $b:\lambda _W X\to X''$ such that the following diagram is commutative:
$$
\begin{CD}
\gamma_W X @>>> X @>>> \lambda_W X @>>> \gamma _W X[1]\\
@VVaV  @| @VVbV@VVa[1]V \\
X' @>>> X @>>> X''@>>> X'[1]
\end{CD}
$$
\color{black}
\end{lemma}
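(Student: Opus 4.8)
The lemma is the Bousfield (co)localization package attached to the pair $(\cL_W,\cL_W^\perp)$, and the plan is to extract everything from the two adjunctions $(i_W,\gamma_W)$ and $(\lambda_W,j_W)$ together with the orthogonality $\Hom_\cD(A,B)=0$ for $A\in\cL_W$ and $B\in\cL_W^\perp$. To produce the triangle I would take the counit $\varepsilon\colon i_W\gamma_WX\to X$ of $(i_W,\gamma_W)$ and embed it in a triangle $\gamma_WX\xrightarrow{\varepsilon}X\to Z\to\gamma_WX[1]$ (suppressing $i_W$ as announced). The first point to check is $Z\in\cL_W^\perp$: for $A\in\cL_W$, full faithfulness of $i_W$ together with the adjunction identifies postcomposition with $\varepsilon$ with a bijection $\Hom_\cD(A[n],\gamma_WX)\xrightarrow{\sim}\Hom_\cD(A[n],X)$ for every $n\in\Z$, and the long exact sequence obtained by applying $\Hom_\cD(A,-)$ to the triangle then forces $\Hom_\cD(A[n],Z)=0$. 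Applying instead $\Hom_\cD(-,B)$ for $B\in\cL_W^\perp$ and using $\Hom_\cD(\gamma_WX[n],B)=0$ gives $\Hom_\cD(Z,B)\xrightarrow{\sim}\Hom_\cD(X,B)$, naturally in $B$; hence $Z$ together with $X\to Z$ represents on $\cL_W^\perp$ the same functor as the unit $X\to\lambda_WX$ does, and since both $Z$ and $\lambda_WX$ lie in $\cL_W^\perp$, Yoneda yields a unique isomorphism $Z\cong\lambda_WX$ compatible with the maps out of $X$. Transporting the triangle along this isomorphism gives the asserted triangle with its two natural morphisms.

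For the uniqueness statement, let $X'\xrightarrow{f}X\xrightarrow{p}X''\xrightarrow{\partial}X'[1]$ be a triangle with $X'\in\cL_W$ and $X''\in\cL_W^\perp$, and write $\eta\colon X\to\lambda_WX$ for the unit. Since $\gamma_WX\in\cL_W$ and $X''\in\cL_W^\perp$, the composite $p\varepsilon$ vanishes, so exactness of $\Hom_\cD(\gamma_WX,-)$ on the triangle of $X'$ produces $a\colon\gamma_WX\to X'$ with $fa=\varepsilon$, and $\Hom_\cD(\gamma_WX,X''[-1])=0$ makes such an $a$ unique. Symmetrically, the same vanishing of $p\varepsilon$ means $p$ lies in the image of $\eta^{*}$ in the long exact sequence of $\Hom_\cD(-,X'')$ applied to the triangle $\gamma_WX\xrightarrow{\varepsilon}X\xrightarrow{\eta}\lambda_WX\to\gamma_WX[1]$, so $p=b\eta$ for some $b\colon\lambda_WX\to X''$, unique because $\Hom_\cD(\gamma_WX[1],X'')=0$. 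Finally I would invoke the standard axiom that the commutative square $fa=\varepsilon$ extends to a morphism of the two triangles; its third arrow $c\colon\lambda_WX\to X''$ then satisfies $c\eta=p$, hence $c=b$, so the whole three-by-three diagram commutes and $a,b$ are the only morphisms making it do so.

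It then remains to see that $a$ and $b$ are isomorphisms. For $A\in\cL_W$, applying $\Hom_\cD(A,-)$ to the two triangles shows that $f$ induces a bijection onto $\Hom_\cD(A,X)$ (since $\Hom_\cD(A[n],X'')=0$) and $\varepsilon$ does as well; from $fa=\varepsilon$ it follows that $a$ induces a bijection $\Hom_\cD(A,\gamma_WX)\xrightarrow{\sim}\Hom_\cD(A,X')$ for every $A\in\cL_W$, so $a$ is an isomorphism by Yoneda inside $\cL_W$. Dually, applying $\Hom_\cD(-,B)$ for $B\in\cL_W^\perp$ shows $p$ and $\eta$ induce bijections onto $\Hom_\cD(X,B)$, hence so does $b$, and $b$ is an isomorphism by Yoneda inside $\cL_W^\perp$. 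I expect the only subtle points to be the two Yoneda identifications---that the connecting map $X\to Z$ of the abstractly completed triangle is genuinely the unit of $\lambda_W$, and that the completion of $fa=\varepsilon$ to a morphism of triangles is forced to be $b$---both of which rest on nothing beyond the orthogonality $\Hom_\cD(\cL_W,\cL_W^\perp)=0$ and the full faithfulness of $i_W$ and $j_W$; everything else is routine bookkeeping with long exact sequences.
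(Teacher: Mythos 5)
Your argument is correct and complete; it is the standard Bousfield (co)localization argument built from the two adjunctions and the orthogonality $\Hom_\cD(\cL_W,\cL_W^\perp)=0$. The paper does not prove the lemma itself but refers to Krause \cite[\S 4.11]{K}, where essentially this same argument is carried out, so there is no substantive difference in approach.
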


See $\text{\cite[\S 4.11]{K}}$ for the proof of this lemma.\\
 
Let $W$ be a subset of $\Spec R$. Then $\lambda_W$ is  a localization functor.
In other words, writing $\eta:\id_\cD\to \lambda_W$ for the natural morphism, it follows that $\lambda_W\eta:\lambda_W\to \lambda_W^{2}$ is invertible and $\lambda_W\eta=\eta\lambda_W$.
Conversely, $\gamma_W$ is
a colocalization functor, that is, for the natural morphism
 $\varepsilon:\gamma_W\to \id_\cD$, it follows that $\gamma_W\varepsilon:\gamma_W^2\to \gamma_W$ is invertible and $\gamma_W\varepsilon=\varepsilon\gamma_W$. 
Note that we uniquely obtain the localization (resp. colocalization) functor $\lambda_W$ (resp. $\gamma_W$) for a subset $W$ of $\Spec R$.

\color{black}
 
\begin{definition}
Let  $W$  be a subset of  $\Spec R$.
We call $\gamma_W$ the colocalization functor with support in $W$.
\end{definition}

Recall that a subset $W$ of  $\Spec R$ is called specialization-closed (resp. generalization-closed) if 
 the following condition holds:

\vspace{6pt}\noindent 
(*)\ Let $\fp, \fq \in \Spec R$. 
If $\fp \in W$ and $\fp\subseteq \fq$ (resp. $\fp\supseteq \fq$), then 
$\fq$ belongs to $W$. 
\vspace{6pt}

If $V$ is a specialization-closed subset, then the colocalization functor $\gamma_V$ coincides with the right derived functor $\mathrm{R}\Gamma_V$ of the section functor $\Gamma_V$ with support in $V$, see \cite[Appendix 3.5]{L}.

%%%%%%%%%%%%%%%%%%%%%%%%%%%%%%%%%%%%%%%%%%%%%%%%%%
%%%%%%%%%%%%%%%%%%%%%%%%%%%%%%%%%%%%%%%%%%%%%%%%%%
%%%%%%%%%%%%%%%%%%%%%%%%%%%%%%%%%%%%%%%%%%%%%%%%%%
%%%%%%%%%%%%%%%%%%%%%%%%%%%%%%%%%%%%%%%%%%%%%%%%%%
%%%%%%%%%%%%%%%%%%%%%%%%%%%%%%%%%%%%%%%%%%%%%%%%%%
%%%%%%%%%%%%%%%%%%%%%%%%%%%%%%%%%%%%%%%%%%%%%%%%%%
%%%%%%%%%%%%%%%%%%%%%%%%%%%%%%%%%%%%%%%%%%%%%%%%%%
%%%%%%%%%%%%%%%%%%%%%%%%%%%%%%%%%%%%%%%%%%%%%%%%%%

\section{Auxiliary results on colocalization functors}
\label{3}
Let $W$ be a subset of $\Spec R$ and let  $\gamma _W$  be the colocalization functor with support in $W$. 
In general, it is hard to describe the functor $\gamma _W$ explicitly. 
However there are some cases in which the colocalization functor $\gamma _W$  is the form of composition of known functors. 

Let $S$ be a multiplicatively closed subset of $R$.
We denote by $U_S$ the generalization-closed subset $\Set{\fq\in \Spec R | \fq  \cap S=\emptyset }$. 
Note that $U_S$ is naturally identified with $\Spec S^{-1}R$.  
Following \cite{BIK}, 
we also write $U(\fp)=\Set{\fq\in \Spec R | \fq \subseteq \fp}$ for a prime ideal $\fp$ of $R$. Then, setting $S=R\backslash \fp$, we have $U(\fp)=U_S$.

\begin{proposition}\label{generalize gamma p}
Let $S$ be a multiplicatively closed subset of $R$ and $V$ be a specialization-closed subset of $\Spec R$. We set  
$W=V\cap U_S$.
Then we have an isomorphism
$$
\gamma_W \cong \RGamma_V\RHom_R(S^{-1}R,-).
$$
\end{proposition}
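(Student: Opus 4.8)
The plan is to exhibit, for each $X\in\cD$, a triangle $F(X)\to X\to C(X)\to F(X)[1]$ with $F=\RGamma_V\RHom_R(S^{-1}R,-)$, $F(X)\in\cL_W$ and $C(X)\in\cL_W^{\perp}$, and then to invoke Lemma~\ref{LC-like}. The morphism $F(X)\to X$ will be the composite $\phi_X\colon\RGamma_V\RHom_R(S^{-1}R,X)\xrightarrow{\varepsilon}\RHom_R(S^{-1}R,X)\to X$, where $\varepsilon$ is the counit $\RGamma_V\to\id_\cD$ and the second arrow is induced by the canonical ring map $R\to S^{-1}R$; both are natural in $X$, so Lemma~\ref{LC-like} will produce isomorphisms $\gamma_WX\cong F(X)$, and the uniqueness clause of that lemma forces them to be natural in $X$.

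First I would analyse $\RHom_R(S^{-1}R,-)$ by itself. Since $R\to S^{-1}R$ is a flat ring epimorphism, $S^{-1}R\otimes_RS^{-1}R\cong S^{-1}R$, so $\RHom_R(S^{-1}R,X)$ carries a natural $S^{-1}R$-module structure; equivalently, for $\fp\notin U_S$ one has $S^{-1}R\otimes_R\kappa(\fp)=0$, whence $\RHom_R(S^{-1}R,X)\Lotimes\kappa(\fp)\cong\RHom_R(S^{-1}R,X)\Lotimes(S^{-1}R\Lotimes\kappa(\fp))=0$. Thus $\supp\RHom_R(S^{-1}R,X)\subseteq U_S$, i.e.\ $\RHom_R(S^{-1}R,X)\in\cL_{U_S}$. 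Moreover, if $K_S$ denotes the cofibre of $R\to S^{-1}R$, then applying $\RHom_R(-,X)$ to the triangle $R\to S^{-1}R\to K_S\to R[1]$ shows that the cone of $\RHom_R(S^{-1}R,X)\to X$ is $\RHom_R(K_S,X)[1]$; and for $\fp\in U_S$ the map $\kappa(\fp)\to S^{-1}R\Lotimes\kappa(\fp)$ is an isomorphism, so $K_S\Lotimes\kappa(\fp)=0$ and hence $\RHom_R(\kappa(\fp),\RHom_R(K_S,X))\cong\RHom_R(K_S\Lotimes\kappa(\fp),X)=0$. (This already gives $\RHom_R(S^{-1}R,-)\cong\gamma_{U_S}$, but only the two displayed facts are needed below.)

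Next I would assemble this with $\RGamma_V=\gamma_V$. Put $Y=\RHom_R(S^{-1}R,X)$. On one hand $F(X)=\RGamma_VY\in\cL_V$; on the other hand the classical smashing identity $\RGamma_VY\cong\RGamma_VR\Lotimes Y$ gives $\supp F(X)\subseteq\supp Y\subseteq U_S$. Hence $\supp F(X)\subseteq V\cap U_S=W$, so $F(X)\in\cL_W$. To see that the cone $C(X)$ of $\phi_X$ lies in $\cL_W^{\perp}$, it suffices, by the description of $\cL_W^{\perp}$ recalled in \S\ref{2}, to check that $\RHom_R(\kappa(\fp),\phi_X)$ is an isomorphism for every $\fp\in W$. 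Factor $\phi_X$ as $\varepsilon_Y\colon\RGamma_VY\to Y$ followed by $Y\to X$. Since $\fp\in V$, every $\kappa(\fp)[i]$ lies in $\cL_V$, so the adjunction $(i_V,\gamma_V)$ yields $\Hom_\cD(\kappa(\fp)[i],\RGamma_VY)\cong\Hom_\cD(\kappa(\fp)[i],Y)$ realised by $\varepsilon_Y$, whence $\RHom_R(\kappa(\fp),\varepsilon_Y)$ is a quasi-isomorphism. By tensor--hom adjunction, $\RHom_R(\kappa(\fp),-)$ applied to $Y\to X$ is the map $\RHom_R(\kappa(\fp)\Lotimes S^{-1}R,X)\to\RHom_R(\kappa(\fp),X)$ induced by $\kappa(\fp)\to\kappa(\fp)\Lotimes S^{-1}R$, which is an isomorphism because $\fp\in U_S$. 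Composing, $\RHom_R(\kappa(\fp),\phi_X)$ is an isomorphism, so $\RHom_R(\kappa(\fp),C(X))=0$ for all $\fp\in W$, i.e.\ $C(X)\in\cL_W^{\perp}$, and Lemma~\ref{LC-like} finishes the proof.

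The step I expect to need the most care is the support bookkeeping in the two middle paragraphs: showing $\supp\RHom_R(S^{-1}R,X)\subseteq U_S$ rests on $R\to S^{-1}R$ being a flat epimorphism (so that $\RHom_R(S^{-1}R,X)$ behaves like a complex of $S^{-1}R$-modules, rather than on any finiteness of $S^{-1}R$), and controlling $\supp\RGamma_VY$ uses the smashing formula $\RGamma_V\cong\RGamma_VR\Lotimes(-)$. Once these are in hand, the triangulated manipulations and the naturality of the resulting isomorphism are formal consequences of Lemma~\ref{LC-like}.
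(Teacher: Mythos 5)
Your proposal is correct and follows essentially the same route as the paper: the same composite morphism $\RGamma_V\RHom_R(S^{-1}R,X)\to X$, the same verification that the source lies in $\cL_W$ and the cone in $\cL_W^{\perp}$ via $\RHom_R(\kappa(\fp),-)$ for $\fp\in W$, and the same appeal to Lemma~\ref{LC-like}. The only (harmless) differences are in bookkeeping: the paper bounds the support by noting the complex is a complex of $S^{-1}R$-modules and invokes the definition of $\RGamma_V$, where you use the flat-epimorphism and smashing-formula justifications instead.
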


\begin{proof}
The ring homomorphism $R\rightarrow S^{-1}R$ induces a morphism $\RHom_R(S^{-1}R,X)\rightarrow X$ for $X \in \cD$.
Write $f:\RGamma_{V}\RHom_R(S^{-1}R,X)\rightarrow X$  for the composition of the natural morphism 
$\RGamma_{V}\RHom_R(S^{-1}R,X) \to  \RHom_R(S^{-1}R,X)$  with this morphism, and we consider 
the triangle
$$
\begin{CD}
\RGamma_{V}\RHom_R(S^{-1}R,X)@>{f}>> X@>>> C@>>> \RGamma_{V}\RHom_R(S^{-1}R,X)[1]
\end{CD}.
$$
Since the complex $\RGamma_{V} \RHom_R(S^{-1}R,X)$ can be regarded as a complex of  $S^{-1}R$-modules, 
it follows that $\supp \RGamma_{V}\RHom_R(S^{-1}R,X)\subseteq U_S$. 
At the same time,  it follows from the definition that $\supp \RGamma_{V}\RHom_R(S^{-1}R,X)\subseteq V$.
Therefore $\supp \RGamma_{V}\RHom_R(S^{-1}R,X)$ must be contained in $W$.

On the other hand, if $\fp\in W$, then there are isomorphisms 
\begin{align*}
\RHom_R(\kappa(\fp),\RGamma_{V}\RHom_R(S^{-1}R,X))&\cong \RHom_R(\kappa(\fp),\RHom_R(S^{-1}R,X))\\
&\cong \RHom_R(\kappa(\fp),X).
\end{align*}
It implies that $\RHom_R(\kappa(\fp),f)$  is an isomorphism, and thus  $\RHom_R(\kappa(\fp),C)=0$. 

Since we have shown that  $\RGamma_{V}\RHom_R(S^{-1}R,X) \in \cL_W$  and $C\in \cL_W^\perp$, 
we can use  Lemma \ref{LC-like} to deduce  $\gamma_WX\cong \RGamma_{V}\RHom_R(S^{-1}R,X)$. 
\end{proof}

In the following lemma we show that the colocalization functor considered in  Proposition \ref{generalize gamma p} is a right derived functor of a left exact functor defined on $\Mod R$.
We say that a complex $I$ of $R$-modules is $K$-injective if $\Hom_R(- ,I)$ preserves quasi-isomorphisms.

\begin{lemma}\label{gamma p 2}
Let $S, V$ and $W$ be the same as in Proposition \ref{generalize gamma p}. 
Then the colocalization functor $\gamma_{W}$ is the right derived functor of the functor $\Gamma_{V}\Hom_R(S^{-1}R,-)$  defined on $\Mod R$.
\end{lemma}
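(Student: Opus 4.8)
The plan is to show that $\Gamma_V\Hom_R(S^{-1}R,-)$ is a left exact functor on $\Mod R$ whose right derived functor, computed via $K$-injective resolutions, agrees with the composite $\RGamma_V\RHom_R(S^{-1}R,-)$ identified with $\gamma_W$ in Proposition~\ref{generalize gamma p}. First I would recall that $\Hom_R(S^{-1}R,-)$ is left exact on $\Mod R$, and that $\Gamma_V$ is left exact; hence the composite $\Gamma_V\Hom_R(S^{-1}R,-)$ is a left exact additive functor on $\Mod R$, and so it has a right derived functor $\mathrm{R}(\Gamma_V\Hom_R(S^{-1}R,-))\colon \cD \to \cD$ obtained by applying the functor term-wise to a $K$-injective resolution. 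The goal is then to identify this with $\RGamma_V\RHom_R(S^{-1}R,-)$.

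The key observation driving the identification is that an injective $R$-module $I$, when we form $\Hom_R(S^{-1}R,I)$, yields an injective $R$-module which is moreover an injective $S^{-1}R$-module (this is standard: $\Hom_R(S^{-1}R,I)$ is the "$S$-section" $\Gamma_{U_S}$-type piece, and localization-coinduction preserves injectivity). Consequently, for a $K$-injective complex $I$ quasi-isomorphic to $X$, the complex $\Hom_R(S^{-1}R,I)$ computes $\RHom_R(S^{-1}R,X)$ and is itself a complex of injective $R$-modules; I would check it is $K$-injective (a bounded-below or general $K$-injectivity argument: $\Hom_R(S^{-1}R,-)$ has an exact left adjoint, namely restriction of scalars along $R\to S^{-1}R$ — wait, more precisely $\Hom_R(S^{-1}R,-)$ is right adjoint to $S^{-1}R\otimes_R -$, which is exact, so $\Hom_R(S^{-1}R,-)$ preserves $K$-injectivity). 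Therefore $\Gamma_V$ applied term-wise to $\Hom_R(S^{-1}R,I)$ simultaneously computes $\RGamma_V$ of that complex (since $\Gamma_V$ of a $K$-injective complex computes $\RGamma_V$) and computes the derived functor of the composite $\Gamma_V\Hom_R(S^{-1}R,-)$ (since $I$ is $K$-injective and $\Hom_R(S^{-1}R,I)$ is the relevant resolution in the Grothendieck spectral sequence / composite-functor sense). This chain of identifications gives
$$
\mathrm{R}\bigl(\Gamma_V\Hom_R(S^{-1}R,-)\bigr)(X) \;\cong\; \RGamma_V\RHom_R(S^{-1}R,X) \;\cong\; \gamma_W X,
$$
the last isomorphism being Proposition~\ref{generalize gamma p}, and naturality is clear from the construction.

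The main obstacle I anticipate is the verification that $\Hom_R(S^{-1}R,I)$ is again injective (equivalently $K$-injective at the complex level) for injective $I$, and the clean bookkeeping needed to say that a single $K$-injective complex $I$ resolving $X$ can be used to compute \emph{both} derived functors at once — i.e.\ that no second resolution step is needed because $\Hom_R(S^{-1}R,I)$ already lands in injectives. Once that adjunction-and-injectivity point is pinned down (using that $S^{-1}R\otimes_R-$ is exact, hence its right adjoint preserves injectives and $K$-injectives), the rest is formal: left-exactness gives the existence of the right derived functor, and the composite computes as claimed. A minor point to address is that we are working with unbounded complexes, so one should invoke $K$-injective resolutions throughout rather than bounded-below injective resolutions, but this causes no difficulty given the preservation of $K$-injectivity.
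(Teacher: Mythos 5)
Your proof is correct and follows essentially the same route as the paper: take a $K$-injective resolution $I$ of $X$ consisting of injective $R$-modules and apply the functors termwise, using Proposition \ref{generalize gamma p} for the final identification with $\gamma_W$. The only difference is in how the step $\RGamma_V(\Hom_R(S^{-1}R,I))\cong \Gamma_V(\Hom_R(S^{-1}R,I))$ is justified: you prove that $\Hom_R(S^{-1}R,-)$ preserves $K$-injectivity via its exact left adjoint $-\otimes_R S^{-1}R$, whereas the paper invokes Lipman's result that $\RGamma_V J\cong\Gamma_V J$ for \emph{any} complex $J$ of injective $R$-modules, $K$-injective or not.
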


\begin{proof}
Let $X \in \cD$ and take a $K$-injective resolution $I$ of $X$ that consists of injective $R$-modules. 
Then $\RHom_R(S^{-1}R,X) \cong \Hom_R(S^{-1}R,I)$, and the right-hand complex consists of injective $R$-modules, too.
It is known by  \cite[Lemma 3.5.1]{L} that  for any complex $J$ of injective R-modules (that is not necessarily $K$-injective), $\RGamma_{V} J$ is naturally isomorphic to $\Gamma_{V}J$. 
Therefore we have 
$\RGamma_{V}\RHom_R(S^{-1}R,X)\cong \Gamma_{V}\Hom_R(S^{-1}R, I)\cong{\rm R}(\Gamma_{V}\Hom_R(S^{-1}R,-))(X)$.
\end{proof}

Henceforth, for a subset $W$ of $\Spec R$, we write $W^c=\Spec R\backslash W$. 
By Proposition $\text{\ref{generalize gamma p}}$, we have an isomorphism
$$
\gamma_{U_S}\cong \RHom_R(S^{-1}R,-).
$$
We should mention that the isomorphism $\gamma_{U(\fp)} \cong \RHom_R (R_\fp,-)$ already appeared in \cite[\S 4; P175]{BIK3}, in which the authors use the notation $V^{Z(\fp)}$, where $Z(\fp)=U(\fp)^c$, see also Remark \ref{RHom adjoint}.

\begin{corollary}\label{gamma p}
Let $\fp\in \Spec R$. 
Then we have an isomorphism  
$$
\gamma_{\{\fp\}} \cong \RGamma_{V(\fp)}\RHom_R(R_\fp,-),
$$
the right-hand side of  which is the right derived functor of  $\Gamma_{V(\fp)}\Hom_R(R_\fp,-)$ defined on $\Mod R$. 
\end{corollary}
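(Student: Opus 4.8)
The plan is to derive Corollary \ref{gamma p} as the special case of Proposition \ref{generalize gamma p} (and Lemma \ref{gamma p 2}) obtained by taking $S = R\setminus\fp$, so that $S^{-1}R = R_\fp$ and $U_S = U(\fp)$, together with the choice $V = V(\fp)$. First I would observe that the singleton $\{\fp\}$ decomposes as the intersection $V(\fp)\cap U(\fp)$: indeed, a prime $\fq$ lies in $V(\fp)$ precisely when $\fq\supseteq\fp$, and in $U(\fp)=U_S$ precisely when $\fq\subseteq\fp$, and the only prime satisfying both is $\fp$ itself. Here $V(\fp)$ is visibly specialization-closed and $S=R\setminus\fp$ is multiplicatively closed, so the hypotheses of Proposition \ref{generalize gamma p} are met with $W=\{\fp\}$.

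With this identification in hand, the isomorphism $\gamma_{\{\fp\}}\cong \RGamma_{V(\fp)}\RHom_R(R_\fp,-)$ is an immediate instance of the conclusion $\gamma_W\cong \RGamma_V\RHom_R(S^{-1}R,-)$ of Proposition \ref{generalize gamma p}. For the second assertion, that the right-hand side is the right derived functor of the ordinary left exact functor $\Gamma_{V(\fp)}\Hom_R(R_\fp,-)$ on $\Mod R$, I would simply invoke Lemma \ref{gamma p 2} with the same data $S=R\setminus\fp$, $V=V(\fp)$, $W=\{\fp\}$; that lemma already furnishes exactly this statement. So the entire proof is a one-line specialization of the two preceding results, and I would write it as such.

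There is essentially no obstacle here: the only thing requiring a word of justification is the set-theoretic equality $\{\fp\} = V(\fp)\cap U(\fp)$, and even that is transparent once the definitions of $V(\fp)$ and $U(\fp)$ are recalled (the latter being already introduced in the paragraph preceding Proposition \ref{generalize gamma p}, where it is noted that $U(\fp)=U_S$ for $S=R\setminus\fp$). If anything needs care, it is only to be sure the reader sees that $V=V(\fp)$ is the chosen specialization-closed set and not some larger one; but since $V(\fp)$ is the smallest specialization-closed set containing $\fp$, this is the natural and in fact forced choice. Accordingly I would present the proof in two sentences: first exhibit $\{\fp\}=V(\fp)\cap U(\fp)$, then cite Proposition \ref{generalize gamma p} and Lemma \ref{gamma p 2}.
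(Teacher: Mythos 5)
Your proposal is correct and is exactly the derivation the paper intends: the corollary is the special case of Proposition \ref{generalize gamma p} and Lemma \ref{gamma p 2} with $S=R\setminus\fp$, $V=V(\fp)$, and $W=V(\fp)\cap U(\fp)=\{\fp\}$. The paper gives no separate proof precisely because this one-line specialization suffices.
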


By the corollary, if $I$ is an injective $R$-module, then 
 $\gamma_{\{\fp\}}I \cong \Gamma_{V(\fp)}\Hom_R(R_\fp,I)$ is also 
 an injective $R$-module. We can describe how this injective $R$-module is decomposed into a sum of indecomposable ones.

\begin{corollary}\label{gamma p 3}
Let $\fp$ be a prime ideal of $R$ and $I$ be an injective $R$-module. Then $\gamma_{\{\fp\}} I$ is isomorphic to the direct sum $\bigoplus_{B}E_R(R/\fp)$ of $B$-copies of $E _R(R/\fp)$, where $B=\dim_{\kappa(\fp)} \Hom_R(\kappa(\fp),I)$.
\end{corollary}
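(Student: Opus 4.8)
The plan is to reduce the computation to a statement about injective modules over the ring $R_\fp$ and then invoke the classical structure theory of injective modules over a Noetherian ring (Matlis theory). By Corollary~\ref{gamma p}, we have $\gamma_{\{\fp\}} I \cong \Gamma_{V(\fp)}\Hom_R(R_\fp, I)$, and since $I$ is injective, the module $\Hom_R(R_\fp, I)$ is again an injective $R$-module (it is the right derived functor applied to an injective, or directly: $\Hom_R(R_\fp,-)$ is exact on injectives and preserves injectivity because $R_\fp$ is flat). Moreover $\Hom_R(R_\fp, I)$ carries a natural $R_\fp$-module structure, and as such it is an injective $R_\fp$-module. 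So the first step is to record that $\gamma_{\{\fp\}} I$ is a $\Gamma_{V(\fp)}$-section of an injective $R_\fp$-module.

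Next I would analyze $\Gamma_{V(\fp)}$ applied to an injective $R_\fp$-module $J$. By Matlis theory over $R_\fp$, write $J \cong \bigoplus_{\fq \in \Spec R_\fp} E_{R_\fp}(R_\fp/\fq)^{(B_\fq)}$ for suitable cardinals $B_\fq$. The submodule $\Gamma_{V(\fp)} J$ consists of elements annihilated by some power of $\fp R_\fp$ — equivalently, supported only at the maximal ideal $\fp R_\fp$ — so it picks out exactly the summand with $\fq = \fp R_\fp$, giving $\Gamma_{V(\fp)} J \cong E_{R_\fp}(R_\fp/\fp R_\fp)^{(B)} = E_{R_\fp}(\kappa(\fp))^{(B)}$ with $B = B_{\fp R_\fp}$. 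Finally one identifies $E_{R_\fp}(\kappa(\fp)) \cong E_R(R/\fp)$ as $R$-modules (a standard fact), so $\gamma_{\{\fp\}} I \cong \bigoplus_B E_R(R/\fp)$.

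It remains to identify the cardinal $B$. The number of copies of $E_{R_\fp}(\kappa(\fp))$ in the decomposition of an injective $R_\fp$-module $J$ equals $\dim_{\kappa(\fp)} \Hom_{R_\fp}(\kappa(\fp), J)$, since $\Hom_{R_\fp}(\kappa(\fp), E_{R_\fp}(R_\fp/\fq)) = 0$ for $\fq \neq \fp R_\fp$ and is one-dimensional for $\fq = \fp R_\fp$. Applying this with $J = \Hom_R(R_\fp, I)$ and using Hom-tensor adjunction together with $\kappa(\fp) \otimes_R R_\fp \cong \kappa(\fp)$, we get
$$
\Hom_{R_\fp}(\kappa(\fp), \Hom_R(R_\fp, I)) \cong \Hom_R(\kappa(\fp) \otimes_R R_\fp, I) \cong \Hom_R(\kappa(\fp), I),
$$
so $B = \dim_{\kappa(\fp)} \Hom_R(\kappa(\fp), I)$, as claimed.

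The only mildly delicate point — the main obstacle, such as it is — is bookkeeping with possibly infinite cardinals and making sure the direct-sum decompositions behave well: one must take $\Gamma_{V(\fp)}$ of an infinite direct sum of injectives (which is harmless since $\Gamma_{V(\fp)}$ commutes with direct sums) and confirm that $\Hom_{\kappa(\fp)}(\kappa(\fp), -)$ correctly counts multiplicities even in the non-finite case. All of this is standard Matlis theory, so no serious difficulty arises; the proof is essentially an assembly of known facts once Corollary~\ref{gamma p} is in hand.
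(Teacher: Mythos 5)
Your proof is correct, and it rests on the same two pillars as the paper's (the explicit formula $\gamma_{\{\fp\}} I \cong \Gamma_{V(\fp)}\Hom_R(R_\fp,I)$ from Corollary~\ref{gamma p}, plus Matlis theory), but the execution differs in both steps. For the shape of the module, the paper simply observes that $\gamma_{\{\fp\}}I$ is an injective $R$-module lying in $\cL_{\{\fp\}}$, i.e.\ with support in $\{\fp\}$, so Matlis theory forces it to be $\bigoplus_B E_R(R/\fp)$ with no need to decompose $\Hom_R(R_\fp,I)$ over $R_\fp$ and track which summands survive $\Gamma_{V(\fp)}$, as you do. For the multiplicity, the paper uses the defining property of the colocalization functor: since $\kappa(\fp)\in\cL_{\{\fp\}}$, the counit $\gamma_{\{\fp\}}I\to I$ induces an isomorphism $\Hom_R(\kappa(\fp),\gamma_{\{\fp\}}I)\cong\Hom_R(\kappa(\fp),I)$, and the left side is visibly $\bigoplus_B\kappa(\fp)$; you instead recover the same identity by Hom-tensor adjunction applied to the explicit formula. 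Your route is more computational but self-contained at the level of module theory; the paper's is shorter because it exploits the abstract properties of $\gamma_W$ (membership in $\cL_W$ and the adjunction) that are already set up in Section~2. One small point worth making explicit in your version: the vanishing $\Hom_{R_\fp}(\kappa(\fp),E_{R_\fp}(R_\fp/\fq))=0$ for $\fq\subsetneq\fp R_\fp$ needs the standard fact that every nonzero element of $E_{R_\fp}(R_\fp/\fq)$ has annihilator contained in $\fq$ (since $\fq$ is the unique associated prime); you assert it without justification, but it is indeed standard.
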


\begin{proof}
Since $\gamma_{\{\fp\}}I$ is an injective $R$-module with support in $\{ \fp\}$, there is a cardinal number $B$ with 
$\gamma_{\{\fp\}}I\cong \bigoplus_{B} E_R(R/\fp)$.
On the other hand, $\Hom_R(\kappa(\fp),I)\cong \Hom_R(\kappa(\fp),\gamma_{\{\fp\}}I)\cong \bigoplus_{B} \kappa(\fp)$, hence  $B=\dim_{\kappa(\fp)} \Hom_R(\kappa(\fp),I)$.
\end{proof}

\begin{remark}\label{non-zero gamma}
Let $I$ be an injective $R$-module such that $\supp I=\{\fq\}$ for $\fq\in \Spec R$, that is, $I$ is of the form $\bigoplus_{A}E_R(R/\fq)$ for some index set $A$.
Then it is easily seen that $\Hom_R(\kappa(\fp),I)\neq 0$ if and only if $\fp\subseteq \fq$.
Therefore, it follows from Corollary $\text{\ref{gamma p 3}}$ that 
$\gamma_{\{\fp\}}I\neq 0$ if and only if $\fp\subseteq \fq$.
\end{remark}

If $\fp$ is a prime ideal of $R$ which is not maximal, then 
the colocalization functor $\gamma_{\{\fp\}}$ is distinct from $\RGamma_{V(\fp)}((-)\otimes_RR_\fp) $, 
 which is written as  $\varGamma_\fp$ by Benson, Iyengar and Krause in \cite{BIK}.
In fact, for a prime ideal $\fq$ such that $\fp \subsetneq \fq$, it follows that $\varGamma _\fp E_R(R/\fq) = 0$, while $\gamma _{\{\fp\}} E_R(R/\fq) \not= 0$ by Reamrk \ref{non-zero gamma}.

\begin{definition}\label{def dimW}
For a subset $W$ of $\Spec R$,  we denote by $\dim W$ the supremum of the lengths of chains of prime ideals belonging to $W$, {\it i.e.}, 
$$
\dim W = \sup \Set{n | \text{there are} \ \fp_0, \ldots , \fp_n \ \text{in} \ W \ \text{with}\  \fp_0 \subsetneq \fp_1 \subsetneq \cdots \subsetneq \fp_n}.
$$
\end{definition}

Thus $\dim W=0$ means that two distinct prime ideals taken from $W$ have no inclusion relation. 
Moreover, if $W=\emptyset$, then $\dim W=-\infty$ by the definition.

We now want to extend Corollary \ref{gamma p} to the case where $\dim W=0$. 
For this purpose we need some preparatory observations. 
Compare the next remark with \cite[Lemma 3.4 (1)]{BIK}.

\begin{remark}\label{inclusion}
(i) Let $W_0$ and $W$ be subsets of $\Spec R$ with inclusion relation $W_0\subseteq W$. In this case, we should note that $\cL_{W_0}\subseteq \cL_{W}$ and $\cL_{W}^\perp \subseteq \cL_{W_0}^\perp$. 
Then it is clear from the uniqueness of adjoint functors that
$$
\gamma_{W_0}\gamma_{W} \cong  \gamma_{W_0} \cong  \gamma_{W}\gamma_{W_0}, \ \ \ 
\lambda_{W} \lambda_{W_0} \cong  \lambda_{W} \cong  \lambda_{W_0}\lambda_{W}.
$$
(ii) Let $W_1$ and $W_2$ be subsets of  $\Spec R$.
In general, $\gamma_{W_1} \gamma_{W_2}$ need not be isomorphic to $\gamma_{W_2} \gamma_{W_1}$.
For example, if $\fp, \fq\in \Spec R$ with $\fp\subsetneq \fq$, then it is seen from $\text{Corollary \ref{gamma p 3}}$ and Reamrk $\text{\ref{non-zero gamma}}$ that $\gamma_{\{\fp\}}\gamma_{\{\fq\}}E_R(R/\fq)\neq 0$ and $\gamma_{\{\fq\}}\gamma_{\{\fp\}}E_R(R/\fq)=0$.
Similarly, $\lambda_{W_1}\lambda_{W_2}$ need not be isomorphic to $\lambda_{W_2}\lambda_{W_1}$.
Moreover, for a general subset $W$, $\gamma_W$ dose not necessarily commute with the localization $(-)\otimes_RS^{-1}R$ with respect to a multiplicatively closed subset $S$.
\end{remark}\vspace{2mm}

The following lemma will be used in the later sections. 

\begin{lemma}[Foxby-Iyengar \cite{FI}]\label{Foxby-Iyengar}
Let $(R,\fm,k)$ be a commutative Noetherian local ring. 
Then the following conditions are equivalent for any  $X \in \cD$:
\begin{enumerate}[{\rm (1)}]
\item $X\Lotimes k  \neq 0$;
\item $\RHom_R(k,X) \neq 0$;
\item $\RGamma_{V(\fm)} X\neq 0$. 
\end{enumerate}
\end{lemma}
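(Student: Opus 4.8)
The plan is to prove the equivalences $(1)\Leftrightarrow(2)\Leftrightarrow(3)$ by exhibiting each condition as the statement that a certain derived functor applied to $X$ is nonzero, and then linking them through supports. First I would settle $(1)\Leftrightarrow(3)$: the key observation is that $\RGamma_{V(\fm)}X$ is built as a homotopy colimit of Koszul-type complexes $X\Lotimes K(\fm^n)$ on a generating sequence of $\fm$, so $\supp\RGamma_{V(\fm)}X\subseteq V(\fm)=\{\fm\}$; since $\RGamma_{V(\fm)}X$ is a complex over the local ring $R$ with support contained in the single closed point, it is nonzero if and only if its only possible support point $\fm$ actually lies in $\supp$, i.e. iff $X\Lotimes\kappa(\fm)=X\Lotimes k\neq 0$. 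Concretely, one checks $\supp\RGamma_{V(\fm)}X=\supp X\cap V(\fm)$ (a standard fact about the small support of local cohomology, or derive it from $\RGamma_{V(\fm)}X\otimes^{\rm L}\kappa(\fp)\cong\RGamma_{V(\fm)}(X\otimes^{\rm L}\kappa(\fp))$ and the fact that $\RGamma_{V(\fm)}$ kills complexes supported away from $\fm$ and is the identity up to support on $k$), and the right-hand side is nonempty iff $\fm\in\supp X$ iff $X\Lotimes k\neq 0$; meanwhile $\supp\RGamma_{V(\fm)}X\neq\emptyset\Leftrightarrow\RGamma_{V(\fm)}X\neq 0$ by the cited fact that a complex is zero iff its small support is empty.

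Next I would handle $(2)\Leftrightarrow(3)$ in the same spirit, using the triangle $\RGamma_{V(\fm)}X\to X\to\lambda_{V(\fm)}X\to$ from Lemma \ref{LC-like} together with the description $\lambda_{V(\fm)}X\in\cL_{V(\fm)}^\perp$, which by the displayed characterization of $\cL_W^\perp$ means $\RHom_R(\kappa(\fm),\lambda_{V(\fm)}X)=\RHom_R(k,\lambda_{V(\fm)}X)=0$. Applying $\RHom_R(k,-)$ to the triangle then gives $\RHom_R(k,\RGamma_{V(\fm)}X)\xrightarrow{\sim}\RHom_R(k,X)$. Now for a complex $Z$ with $\supp Z\subseteq\{\fm\}$ one has $\RHom_R(k,Z)=0\Rightarrow Z\in\cL_{\{\fm\}}^\perp$, but $\cL_{\{\fm\}}^\perp\cap\cL_{\{\fm\}}=0$ (the orthogonality is between a subcategory and its right orthogonal), and $Z\in\cL_{\{\fm\}}$ since $\supp Z\subseteq\{\fm\}$; hence $\RHom_R(k,Z)=0\Leftrightarrow Z=0$. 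Taking $Z=\RGamma_{V(\fm)}X$ yields $\RHom_R(k,X)\neq 0\Leftrightarrow\RHom_R(k,\RGamma_{V(\fm)}X)\neq 0\Leftrightarrow\RGamma_{V(\fm)}X\neq 0$, which is $(2)\Leftrightarrow(3)$.

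The main obstacle, and where I would spend the most care, is justifying the two ``support lemmas'' for $\RGamma_{V(\fm)}$ cleanly: namely that $\supp\RGamma_{V(\fm)}X\subseteq\{\fm\}$ and that $\fm\in\supp\RGamma_{V(\fm)}X$ precisely when $\fm\in\supp X$. The first is routine from the telescope construction of $\RGamma_{V(\fm)}$ and compatibility of $\supp$ with homotopy colimits and derived tensor, but the second requires knowing that $\RGamma_{V(\fm)}$ does not annihilate the ``$\fm$-part'' of $X$; the slickest route is the base-change isomorphism $\RGamma_{V(\fm)}X\Lotimes\kappa(\fm)\cong\RGamma_{V(\fm)}(X\Lotimes\kappa(\fm))$ combined with the fact that $X\Lotimes\kappa(\fm)$ is (a sum of shifts of) $k$-vector spaces, on which $\RGamma_{V(\fm)}$ acts as the identity up to isomorphism in $\cD$. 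Once these are in place, everything else is formal manipulation of the triangle and the orthogonality relations; alternatively, one may simply cite \cite{FI} directly since the lemma is attributed to Foxby--Iyengar, but I would prefer to include the short self-contained argument above for completeness.
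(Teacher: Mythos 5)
Your argument is correct, but it is a genuinely different route from the paper, which proves this lemma purely by citation to \cite[Theorems 2.1 and 4.1]{FI} (where the equivalences are established via Koszul complexes and depth/width computations). You instead stay inside the localization formalism of \S 2: for $(1)\Leftrightarrow(3)$ you derive $\supp\RGamma_{V(\fm)}X=\supp X\cap\{\fm\}$ from the telescope identity $\RGamma_{V(\fm)}(-)\cong(-)\Lotimes\RGamma_{V(\fm)}R$ together with $\RGamma_{V(\fm)}k\cong k$, and then invoke ``nonzero iff nonempty small support''; for $(2)\Leftrightarrow(3)$ you use the triangle $\RGamma_{V(\fm)}X\to X\to\lambda_{V(\fm)}X$, the vanishing $\RHom_R(k,\lambda_{V(\fm)}X)=0$, and the orthogonality $\cL_{\{\fm\}}\cap\cL_{\{\fm\}}^{\perp}=0$. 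All of these steps are sound and available from the background recalled in \S 2. One caution: do not lean on the parenthetical ``standard fact'' $\supp\gamma_VX=V\cap\supp X$ (Lemma \ref{BIK}) as a substitute for your base-change computation, since the paper explicitly notes that Lemma \ref{BIK} is itself proved using the present lemma; your direct derivation via $\RGamma_{V(\fm)}X\Lotimes k\cong X\Lotimes\RGamma_{V(\fm)}k\cong X\Lotimes k$ is what keeps the argument non-circular, so it should be presented as the primary justification rather than the fallback. What your approach buys is a short, self-contained proof tailored to this paper's machinery; what the citation buys is access to the sharper quantitative statements of \cite{FI} (the depth and width formulas), which the paper also uses later in \S 6.
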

\begin{proof}
See \cite[Theorem 2.1, Theorem 4.1]{FI}.
\end{proof}

This is implicitly used by Benson, Iyengar and Krause \cite{BIK}
to prove the following lemma.

\begin{lemma}[{\cite[Theorem 5.6]{BIK}}]\label{BIK}
Let $V$ be a specialization-closed subset of $\Spec R$. Then, for each $X$ in $\cD$,  the following equalities hold;
$$
\supp \gamma_V X =V \cap \supp X, \quad  \supp \lambda_V X = V^c \cap \supp X.
$$
\end{lemma}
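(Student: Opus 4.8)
The plan is to verify the two support equalities separately, using the defining triangle $\gamma_V X\to X\to \lambda_V X\to$ from Lemma \ref{LC-like} together with the characterization of $\cL_V$ and $\cL_V^\perp$ by vanishing of $\RHom_R(\kappa(\fp),-)$ (or equivalently, by Lemma \ref{Foxby-Iyengar}, of $(-)\Lotimes\kappa(\fp)$). First I would record the easy inclusions. Since $\gamma_V X\in\cL_V$ we have $\supp\gamma_V X\subseteq V$; and since $V$ is specialization-closed, $\RGamma_V$ is a right derived functor of the section functor $\Gamma_V$, so $\gamma_V X\cong\RGamma_V X$ and applying $-\Lotimes\kappa(\fp)$ for $\fp\in V$ one gets $\gamma_V X\Lotimes\kappa(\fp)\cong X\Lotimes\kappa(\fp)$ via the localization/colocalization at $\fp$ (concretely, $\RGamma_{V(\fp)}(\gamma_V X)_\fp\cong\RGamma_{V(\fp)}X_\fp$ because the cone lies in $\cL_V^\perp$ and is killed by $\RHom_R(\kappa(\fp),-)$, hence by Lemma \ref{Foxby-Iyengar} by $\RGamma_{V(\fp)}$). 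This gives $\supp\gamma_V X= V\cap\supp X$ provided I can show the reverse inclusion $V\cap\supp X\subseteq\supp\gamma_V X$, which is exactly the content just sketched: for $\fp\in V$, $\gamma_V X\Lotimes\kappa(\fp)\neq 0$ iff $X\Lotimes\kappa(\fp)\neq 0$.

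For the second equality, I would apply the long exact sequence in small support, or rather argue prime by prime. Fix $\fp\in\Spec R$. Smashing the triangle with $\kappa(\fp)$ gives a triangle $\gamma_V X\Lotimes\kappa(\fp)\to X\Lotimes\kappa(\fp)\to\lambda_V X\Lotimes\kappa(\fp)\to$. If $\fp\notin\supp X$ then $X\Lotimes\kappa(\fp)=0$, and since $\fp\notin V$ forces $\gamma_V X\Lotimes\kappa(\fp)=0$ (first equality), we get $\lambda_V X\Lotimes\kappa(\fp)=0$; so $\supp\lambda_V X\subseteq\supp X$ in any case, and clearly $\supp\lambda_V X\cap V=\supp\gamma_V(\lambda_V X)=0$ since $\gamma_V\lambda_V=0$ (as $\lambda_V X\in\cL_V^\perp=\Ker\gamma_V$)—wait, more carefully: $\gamma_V(\lambda_V X)\in\cL_V$ and is a summand-type object; better to note directly that $\lambda_V X\in\cL_V^\perp$ means $\RHom_R(\kappa(\fp),\lambda_V X)=0$ for all $\fp\in V$, hence by Lemma \ref{Foxby-Iyengar} $\lambda_V X\Lotimes\kappa(\fp)=0$ for $\fp\in V$, i.e.\ $\supp\lambda_V X\subseteq V^c$. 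Combining, $\supp\lambda_V X\subseteq V^c\cap\supp X$. Conversely, for $\fp\in V^c\cap\supp X$, we have $\gamma_V X\Lotimes\kappa(\fp)=0$ (as $\fp\notin V$), so the triangle yields $\lambda_V X\Lotimes\kappa(\fp)\cong X\Lotimes\kappa(\fp)\neq 0$, whence $\fp\in\supp\lambda_V X$. This establishes $\supp\lambda_V X=V^c\cap\supp X$.

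The one point that needs genuine care — and which I expect to be the main obstacle — is the claim that for $\fp\in V$ the map $\gamma_V X\Lotimes\kappa(\fp)\to X\Lotimes\kappa(\fp)$ is an isomorphism, equivalently that $\lambda_V X\Lotimes\kappa(\fp)=0$ for $\fp\in V$. The clean way is: $\lambda_V X\in\cL_V^\perp$, so by the displayed description of $\cL_V^\perp$ we have $\RHom_R(\kappa(\fp),\lambda_V X)=0$ for every $\fp\in V$; then Lemma \ref{Foxby-Iyengar}, applied over the local ring $R_\fp$ with residue field $\kappa(\fp)$ (using that $\RHom_{R}(\kappa(\fp),\lambda_V X)\cong\RHom_{R_\fp}(\kappa(\fp),(\lambda_V X)_\fp)$ and likewise for the tensor), upgrades this to $(\lambda_V X)_\fp\Lotimes_{R_\fp}\kappa(\fp)=0$, i.e.\ $\lambda_V X\Lotimes_R\kappa(\fp)=0$. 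Once this localization-compatibility of Lemma \ref{Foxby-Iyengar} is in hand, both support equalities drop out of the triangle as above; everything else is bookkeeping with the nonvanishing criterion $\supp Z\neq\emptyset\iff Z\neq 0$.
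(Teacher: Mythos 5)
The paper offers no proof of this lemma (it is quoted from [BIK, Theorem 5.6]), so the question is whether your argument stands on its own. It does not: the step you yourself single out as "the one point that needs genuine care" is exactly where it fails. The isomorphism $\RHom_{R}(\kappa(\fp),Z)\cong\RHom_{R_\fp}(\kappa(\fp),Z_\fp)$ is false for non-maximal $\fp$: since $\kappa(\fp)$ is not a finite $R$-module, $\RHom$ out of it does not commute with localization, and adjunction only gives $\RHom_R(\kappa(\fp),Z)\cong\RHom_{R_\fp}(\kappa(\fp),\RHom_R(R_\fp,Z))$, where $\RHom_R(R_\fp,Z)\not\cong Z_\fp$ in general. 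Consequently the implication you need, namely $\RHom_R(\kappa(\fp),Z)=0\Rightarrow Z\Lotimes\kappa(\fp)=0$, is false: take $R=\Z$, $\fp=(0)$ and $Z=\widehat{\Z}_p$ the $p$-adic integers. Then $\RHom_{\Z}(\mathbb{Q},\widehat{\Z}_p)=0$ (no divisible subgroup, and complete modules are cotorsion), while $\widehat{\Z}_p\otimes_{\Z}\mathbb{Q}=\mathbb{Q}_p\neq 0$. This is precisely the distinction between support and cosupport, and the example is not artificial: $\widehat{\Z}_p\cong\LLambda^{V(p\Z)}\Z\cong\lambda_{W}\Z$ for $W=V(p\Z)^c$ by Proposition \ref{left derived completion}, so objects of the form $\lambda_W X$ really do exhibit this failure. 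Since both of your support computations (the inclusion $V\cap\supp X\subseteq\supp\gamma_V X$ and the inclusion $\supp\lambda_V X\subseteq V^c$) are reduced to this one claim, the proof collapses.

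A useful sanity check: your argument never uses that $V$ is specialization-closed at the crucial step, so it would establish both equalities for an arbitrary subset $W$ --- contradicting the paper's remark immediately after the lemma that they fail in general (for $W=\{\fp\}$ and $X=E_R(R/\fq)$ with $\fp\subsetneq\fq$ one has $\supp\gamma_WX=\{\fp\}$ but $W\cap\supp X=\emptyset$, by Remark \ref{non-zero gamma}). The property of specialization-closed $V$ that the cited proof in [BIK] actually exploits is that $\RGamma_V$ is smashing: $\RGamma_VX\cong\RGamma_VR\Lotimes X$ (via \v{C}ech/stable Koszul complexes). Together with $\supp(A\Lotimes B)=\supp A\cap\supp B$ and $\supp\RGamma_VR=V$, this yields $\supp\gamma_VX=V\cap\supp X$ at once, and in particular $\gamma_VX\Lotimes\kappa(\fp)\to X\Lotimes\kappa(\fp)$ is an isomorphism for $\fp\in V$. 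Your outer scaffolding --- reducing everything to the triangle $\gamma_VX\to X\to\lambda_VX\to\gamma_VX[1]$ and treating primes in $V$ and $V^c$ separately --- is fine once that input is supplied by the tensor formula rather than by a pointwise appeal to Lemma \ref{Foxby-Iyengar}.
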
\vspace{1mm}

Notice that Lemma \ref{BIK} applies only to specialization-closed subsets, and the equalities do not necessarily hold for general subsets, see Corollary \ref{gamma p 3} and Reamrk \ref{non-zero gamma}.

\begin{definition}\label{closure}
Let $W_{0} \subseteq W$  be subsets of  $\Spec R$.
We say that $W_{0}$ is {\it specialization-closed in} $W$ 
if $V(\fp) \cap W \subseteq W_{0}$ for any $\fp \in W_0$. 

Moreover we denote by $\overline{W}^s$ the specialization closure of $W$, which is defined to be the smallest specialization-closed subset of $\Spec R$ containing $W$. 
\end{definition}

\begin{lemma}\label{perp}
Let ${W_{0}} \subseteq W\subseteq \Spec R$ be sets. 
Suppose ${W_{0}}$ is specialization-closed in $W$.
Setting  ${W_{1}} = W \ \backslash {W_0}$, we have $\cL_{{W_{1}}}\subseteq \cL_{{W_0}}^\perp$.
\end{lemma}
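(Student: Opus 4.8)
The plan is to unwind the description of $\cL_{W_0}^\perp$ recorded in Section \ref{2}, namely $\cL_{W_0}^\perp = \Set{Y \in \cD | \RHom_R(\kappa(\fp),Y) = 0 \text{ for all } \fp \in W_0}$, and to show that every object $X$ of $\cL_{W_1}$ belongs to it. So I fix $X$ with $\supp X \subseteq W_1$ together with a prime $\fp \in W_0$, and I aim to prove $\RHom_R(\kappa(\fp),X) = 0$.

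The first step is purely set-theoretic. Since $W_0$ is specialization-closed in $W$, the definition gives $V(\fp) \cap W \subseteq W_0$, and hence $V(\fp) \cap W_1 = V(\fp) \cap (W \setminus W_0) = \emptyset$. Consequently $V(\fp) \cap \supp X \subseteq V(\fp) \cap W_1 = \emptyset$. Because $V(\fp)$ is a specialization-closed subset of $\Spec R$, Lemma \ref{BIK} applies and yields $\supp \gamma_{V(\fp)} X = V(\fp) \cap \supp X = \emptyset$; since an object with empty support is zero, $\gamma_{V(\fp)} X = 0$.

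To conclude, I would use the equality $\Ker \gamma_{V(\fp)} = \cL_{V(\fp)}^\perp$ noted in Section \ref{2}, so that $X \in \cL_{V(\fp)}^\perp$. As $\fp \in V(\fp)$, the analogous description $\cL_{V(\fp)}^\perp = \Set{Y \in \cD | \RHom_R(\kappa(\fq),Y) = 0 \text{ for all } \fq \in V(\fp)}$ forces $\RHom_R(\kappa(\fp),X) = 0$, which is what we wanted. Since $\fp \in W_0$ was arbitrary we get $X \in \cL_{W_0}^\perp$, and since $X \in \cL_{W_1}$ was arbitrary we obtain $\cL_{W_1} \subseteq \cL_{W_0}^\perp$. (Alternatively, once $\gamma_{V(\fp)} X = 0$, the localization triangle of Lemma \ref{LC-like} shows $X \cong \lambda_{V(\fp)} X \in \cL_{V(\fp)}^\perp$ directly, which gives the same conclusion.)

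I do not expect a serious obstacle here. The one place where the hypothesis ``specialization-closed in $W$'' is genuinely used --- rather than the weaker fact that $W_0 \cap W_1 = \emptyset$ --- is in deducing $V(\fp) \cap \supp X = \emptyset$, and this is exactly where Lemma \ref{BIK} enters: the vanishing of $\RHom_R(\kappa(\fp),X)$ is \emph{not} detected by the condition $\fp \notin \supp X$ alone, so one really must know that no specialization of $\fp$ lies in $\supp X$, and this is what lets Lemma \ref{BIK} kill $\gamma_{V(\fp)} X$.
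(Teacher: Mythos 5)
Your proof is correct and follows essentially the same route as the paper's: both arguments reduce the claim to the disjointness of the specializations of $W_0$ from $\supp X \subseteq W_1$ and then invoke Lemma \ref{BIK} to show that the corresponding colocalization kills $X$. The only difference is that the paper applies Lemma \ref{BIK} once to the full specialization closure $\overline{W_0}^s$ and then uses $\gamma_{W_0}\gamma_{\overline{W_0}^s}\cong\gamma_{W_0}$ from Remark \ref{inclusion}, whereas you work prime by prime with $V(\fp)$ and the $\kappa(\fp)$-criterion for membership in $\cL_{W_0}^\perp$; both variants are equally valid.
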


\begin{proof}
It is obvious from the definition that 
 $\overline{{W_{0}}}^s \cap {W_{1}}= \emptyset$.
Assume that $X \in \cL_{{W_{1}}}$. 
Then $\supp \gamma_{\overline{{W_{0}}}^s} X = \overline{{W_{0}}}^s \cap \supp X = \emptyset $ by Lemma \ref{BIK}. 
Therefore we have  $\gamma_{\overline{{W_{0}}}^s}X=0$.
It then follows from Remark \ref{inclusion} (i) that $\gamma_{{W_{0}}} X \cong \gamma_{{W_{0}}} \gamma_{\overline{{W_{0}}}^s}X = 0$.
Thus we have $X \cong \lambda _{{W_{0}}}X  \in \cL_{{W_{0}}}^\perp$ as desired.
\end{proof}

The following theorem is one of the main results in this section; it extends Corollary \ref{gamma p}.

\begin{theorem}\label{dimW=0 2}
Let $W$ be a subset of $\Spec R$ with $\dim W=0$. 
Then we have the following isomorphisms of functors
$$
\gamma_W \cong \bigoplus_{\fp \in W}\gamma_{\{\fp\}}
\cong \bigoplus_{\fp \in W} \RGamma_{V(\fp)}\RHom_R(R_\fp,-).
$$
Furthermore, $\gamma_W$ is the right derived functor of the left exact functor 
$$\disp{\bigoplus_{\fp \in W}\Gamma_{V(\fp)}\Hom_R(R_\fp,-)}
$$
defined on $\Mod R$.
\end{theorem}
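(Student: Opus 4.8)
The plan is to deduce the two functor isomorphisms from the following claim: \emph{for every $Y\in\cL_W$, the natural morphism $\psi_Y\colon\bigoplus_{\fp\in W}\gamma_{\{\fp\}}Y\to Y$ assembled from the counits $\gamma_{\{\fp\}}Y\to Y$ is an isomorphism.} Granting the claim, one applies it to $Y=\gamma_WX$, which lies in $\cL_W$. Since $\{\fp\}\subseteq W$, applying $\gamma_{\{\fp\}}$ to the triangle $\gamma_WX\to X\to\lambda_WX\to\gamma_WX[1]$ of Lemma~\ref{LC-like} yields a triangle in which $\gamma_{\{\fp\}}\lambda_WX=0$ (as $\lambda_WX\in\cL_W^\perp\subseteq\cL_{\{\fp\}}^\perp=\Ker\gamma_{\{\fp\}}$), so the counit induces an isomorphism $\gamma_{\{\fp\}}\gamma_WX\xrightarrow{\sim}\gamma_{\{\fp\}}X$. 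Composing the inverse of $\psi_{\gamma_WX}$ with the direct sum over $\fp\in W$ of these isomorphisms gives a natural isomorphism $\gamma_WX\cong\bigoplus_{\fp\in W}\gamma_{\{\fp\}}X$; the second isomorphism of the theorem is then immediate from Corollary~\ref{gamma p}.

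To prove the claim, first observe that $\bigoplus_{\fp\in W}\gamma_{\{\fp\}}Y$ lies in $\cL_W$ — each summand lies in $\cL_{\{\fp\}}\subseteq\cL_W$, and $\cL_W$ is closed under coproducts — so the mapping cone $C$ of $\psi_Y$ again lies in $\cL_W$; it therefore suffices to show $C\Lotimes\kappa(\fq)=0$ for every $\fq\in W$. Now $-\Lotimes\kappa(\fq)$ commutes with direct sums and annihilates $\gamma_{\{\fp\}}Y$ for $\fp\ne\fq$ (because $\supp\gamma_{\{\fp\}}Y\subseteq\{\fp\}$); hence the source $\bigoplus_{\fp\in W}(\gamma_{\{\fp\}}Y\Lotimes\kappa(\fq))$ of $\psi_Y\Lotimes\kappa(\fq)$ reduces to $\gamma_{\{\fq\}}Y\Lotimes\kappa(\fq)$, and $\psi_Y\Lotimes\kappa(\fq)$ becomes the map induced by the counit $\gamma_{\{\fq\}}Y\to Y$. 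By Lemma~\ref{LC-like} the mapping cone of this map is $\lambda_{\{\fq\}}Y\Lotimes\kappa(\fq)$, so the claim comes down to proving $\fq\notin\supp\lambda_{\{\fq\}}Y$ for every $\fq\in W$.

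This is where the hypothesis $\dim W=0$ is used, and it is the crux of the whole argument: one cannot instead verify $C\in\cL_W^\perp$ summand by summand, since $\gamma_W$ does not commute with coproducts and $\cL_W^\perp$ is not closed under them — the device is precisely to trade that contravariant membership test for the coproduct-compatible vanishing $C\Lotimes\kappa(\fq)=0$. Since $Y\in\cL_W$ we have $\supp Y\subseteq W$, whence $V(\fq)\cap\supp Y\subseteq V(\fq)\cap W=\{\fq\}$, the last equality because $\dim W=0$. Applying Lemma~\ref{BIK} to the specialization-closed set $V(\fq)$, this reads $\supp\RGamma_{V(\fq)}Y\subseteq\{\fq\}$, i.e.\ $\RGamma_{V(\fq)}Y=\gamma_{V(\fq)}Y\in\cL_{\{\fq\}}$. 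Hence the counit $\gamma_{\{\fq\}}(\gamma_{V(\fq)}Y)\to\gamma_{V(\fq)}Y$ is an isomorphism; combining it with the isomorphism $\gamma_{\{\fq\}}\gamma_{V(\fq)}\cong\gamma_{\{\fq\}}$ of Remark~\ref{inclusion}(i) (valid since $\{\fq\}\subseteq V(\fq)$) identifies $\gamma_{\{\fq\}}Y$ with $\gamma_{V(\fq)}Y$ compatibly with the counits into $Y$, so comparison of the triangles $\gamma_{\{\fq\}}Y\to Y\to\lambda_{\{\fq\}}Y\to\gamma_{\{\fq\}}Y[1]$ and $\gamma_{V(\fq)}Y\to Y\to\lambda_{V(\fq)}Y\to\gamma_{V(\fq)}Y[1]$ of Lemma~\ref{LC-like} gives $\lambda_{\{\fq\}}Y\cong\lambda_{V(\fq)}Y$. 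By Lemma~\ref{BIK} once more, $\supp\lambda_{V(\fq)}Y=V(\fq)^c\cap\supp Y$, which does not contain $\fq$. This proves the claim and with it the two displayed isomorphisms; naturality is automatic, since only counits and the canonical isomorphisms of Remark~\ref{inclusion}(i) are involved.

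For the last clause, recall from Corollary~\ref{gamma p 3} that $\gamma_{\{\fp\}}$ carries injective $R$-modules to injective $R$-modules; as $R$ is Noetherian, arbitrary direct sums of injectives are injective, so the functor $G=\bigoplus_{\fp\in W}\Gamma_{V(\fp)}\Hom_R(R_\fp,-)$ on $\Mod R$ is left exact and preserves injectives. Since the direct sum functor is exact, the argument in the proof of Lemma~\ref{gamma p 2} applies summand by summand: for a $K$-injective resolution $J$ of $X$ consisting of injective modules one obtains $\gamma_WX\cong\bigoplus_{\fp\in W}\Gamma_{V(\fp)}\Hom_R(R_\fp,J)=G(J)$, and $G(J)$ computes $(\mathrm{R}G)(X)$. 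Thus $\gamma_W\cong\mathrm{R}G$, which is the assertion.
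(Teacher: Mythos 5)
Your proof is correct, but it takes a genuinely different route from the paper's. The paper works with an arbitrary $X\in\cD$, forms the same morphism $f\colon\bigoplus_{\fp\in W}\gamma_{\{\fp\}}X\to X$, and verifies the two hypotheses of Lemma~\ref{LC-like} directly: the source lies in $\cL_W$, and the cone lies in $\cL_W^{\perp}$ because $\RHom_R(\kappa(\fp),f)$ is an isomorphism for each $\fp\in W$ --- the point being that the off-diagonal part $\bigoplus_{\fq\in W\setminus\{\fp\}}\gamma_{\{\fq\}}X$ lies in $\cL_{W\setminus\{\fp\}}$, which is closed under coproducts and contained in $\cL_{\{\fp\}}^{\perp}$ by Lemma~\ref{perp} (this is where $\dim W=0$ enters). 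So your parenthetical remark that one ``cannot verify $C\in\cL_W^{\perp}$'' along these lines is not quite accurate: the coproduct obstruction you worry about is exactly what Lemma~\ref{perp} circumvents. Your argument instead first reduces to $Y\in\cL_W$, where the cone automatically lies in $\cL_W$, and then kills it with the coproduct-friendly test $-\Lotimes\kappa(\fq)$; the crux becomes $\fq\notin\supp\lambda_{\{\fq\}}Y$, which you obtain from the identification $\gamma_{\{\fq\}}Y\cong\RGamma_{V(\fq)}Y$ for $Y\in\cL_W$ --- again Lemma~\ref{BIK} plus $V(\fq)\cap W=\{\fq\}$. Both proofs thus draw the $\dim W=0$ hypothesis from Lemma~\ref{BIK}, but yours trades the orthogonality test for a support test and, as a pleasant byproduct, makes explicit that $\gamma_{\{\fq\}}$ and $\RGamma_{V(\fq)}$ agree on $\cL_W$; the paper's version is shorter and avoids the intermediate reduction to $Y\in\cL_W$. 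Your treatment of the final clause (the right derived functor statement) matches the paper's, which likewise reduces to Lemma~\ref{gamma p 2} and Corollary~\ref{gamma p}.
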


\begin{proof}
Let $X\in \cD$. 
Summing up all the natural morphisms $\gamma_{\{\fp\}} X\rightarrow X$ for  $\fp\in W$, 
we obtain a morphism $f:\bigoplus_{\fp \in W} \gamma_{\{\fp\}}X\rightarrow X$, 
from which we obtain a triangle
$$
\begin{CD}
{\displaystyle \bigoplus_{\fp \in W}} \gamma_{\{\fp\}}X @>{f}>>  X 
@>>> C @>>> {\displaystyle \bigoplus_{\fp \in W}} \gamma_{\{\fp\}}X[1].
\end{CD}
$$
It is clear that $\bigoplus_{\fp \in W} \gamma_{\{\fp\}}X\in \cL_W$. 

Now let  $\fp$  be a prime in $W$. 
Since $\{\fp\}$ is specialization-closed in $W$, it follows from Lemma \ref{perp} that 
$\bigoplus_{\fq \in W\backslash \{\fp\}}\gamma_{\{\fq\}}X\in \cL_{\{\fp\}}^\perp$.
Hence we have
$$
\RHom_R(\kappa(\fp),\bigoplus_{\fq \in W} \gamma_{\{\fq\}}X)\cong \RHom_R(\kappa(\fp), \gamma_{\{\fp\}}X)\cong \RHom_R(\kappa(\fp), X).
$$
This implies that $\RHom_R(\kappa(\fp),f)$ is an isomorphism for $\fp\in W$. 
Therefore it follows that $\RHom_R(\kappa(\fp), C)=0$ for all $\fp\in W$, equivalently $C\in \cL_{W}^\perp$.
Hence we conclude by Lemma \ref{LC-like} that $\gamma_WX\cong \bigoplus_{\fp \in W}\gamma_{\{\fp\}}X$ as desired.
The rest of the theorem follows from Lemma \ref{gamma p 2} and Corollary \ref{gamma p}. 
\end{proof}\vspace{2mm}

We denote by $\Inj R$ the full subcategory of $\Mod R$ consisting of all injective $R$-modules.
When $\dim W>0$, even for $I\in \Inj R$, it may happen that there is a negative integer $i$ with  $H^i(\gamma_WI)\neq 0$,
see Example \ref{negative cohomology}. Therefore, for a general subset $W$ of $\Spec R$, $\gamma_W$ is not necessarily a right derived functor of an additive functors defined on $\Mod R$.

The following theorem enables us to compute $\gamma _W X$ by using induction on $\dim W$.

\begin{theorem}\label{theorem1}
 Let ${W_{0}} \subseteq  W \subseteq \Spec R$  be sets. 
 Assume that ${W_{0}}$ is specialization-closed in $W$, and set ${W_{1}}=W \ \backslash \ {W_{0}}$.
For any $X\in \cD$, there is a triangle of  the following form;
$$
\begin{CD}
\gamma_{{W_{0}}} X @>>>  \gamma_W X @>>> \gamma_{{W_{1}}} \lambda_{{W_{0}}} X @>>> 
\gamma_{{W_{0}}} X[1].
\end{CD}
$$
\end{theorem}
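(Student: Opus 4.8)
The plan is to obtain the triangle by applying the exact functor $\gamma_W$ to the canonical triangle attached to $\gamma_{W_0}$ and then identifying the third term.

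First I would start from the triangle $\gamma_{W_0}X\to X\to\lambda_{W_0}X\to\gamma_{W_0}X[1]$ of Lemma \ref{LC-like} and apply $\gamma_W$ to it. Since $\gamma_W$ is exact (its corestriction $\cD\to\cL_W$ is right adjoint to the exact inclusion $i_W$) and $\gamma_W\gamma_{W_0}X\cong\gamma_{W_0}X$ by Remark \ref{inclusion} (i) because $W_0\subseteq W$, this produces a triangle
$$
\gamma_{W_0}X\to\gamma_W X\to\gamma_W\lambda_{W_0}X\to\gamma_{W_0}X[1].
$$
Thus the theorem reduces to a natural isomorphism $\gamma_W\lambda_{W_0}X\cong\gamma_{W_1}\lambda_{W_0}X$.

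Next I would prove the slightly more general statement that $\gamma_W Y\cong\gamma_{W_1}Y$ for every $Y\in\cL_{W_0}^\perp$; since $\lambda_{W_0}X\in\cL_{W_0}^\perp=\Ker\gamma_{W_0}$, taking $Y=\lambda_{W_0}X$ then finishes the argument. For the general statement, consider the triangle $\gamma_{W_1}Y\to Y\to\lambda_{W_1}Y\to\gamma_{W_1}Y[1]$ of Lemma \ref{LC-like}. By Lemma \ref{perp} we have $\cL_{W_1}\subseteq\cL_{W_0}^\perp$, so $\gamma_{W_1}Y\in\cL_{W_0}^\perp$; as $Y\in\cL_{W_0}^\perp$ and $\cL_{W_0}^\perp$ is a triangulated subcategory of $\cD$, it follows that $\lambda_{W_1}Y\in\cL_{W_0}^\perp$ as well. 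On the other hand $\lambda_{W_1}Y\in\cL_{W_1}^\perp$, and from the description of $\cL_W^\perp$ via $\RHom_R(\kappa(\fp),-)$ together with $W=W_0\cup W_1$ one has $\cL_W^\perp=\cL_{W_0}^\perp\cap\cL_{W_1}^\perp$; hence $\lambda_{W_1}Y\in\cL_W^\perp$. Since moreover $\gamma_{W_1}Y\in\cL_{W_1}\subseteq\cL_W$, Lemma \ref{LC-like} identifies the triangle above with the canonical triangle for $\gamma_W$, which gives $\gamma_W Y\cong\gamma_{W_1}Y$ (and incidentally $\lambda_W Y\cong\lambda_{W_1}Y$).

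The proof is short and essentially formal, so the main obstacle is not a hard computation but spotting the right intermediate claim: that $\lambda_{W_1}$ sends $\cL_{W_0}^\perp$ into itself. This is exactly the point at which the hypothesis that $W_0$ is specialization-closed in $W$ enters, through Lemma \ref{perp}; without it the argument breaks down. The one remaining loose end worth checking is functoriality in $X$: the isomorphisms supplied by Lemma \ref{LC-like} are the unique ones compatible with the structure maps, hence natural, so the triangle produced is genuinely functorial in $X$.
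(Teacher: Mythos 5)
Your proof is correct. It rests on the same two pillars as the paper's own argument: Lemma \ref{perp}, which gives $\cL_{W_1}\subseteq\cL_{W_0}^\perp$ and is precisely where the hypothesis that $W_0$ is specialization-closed in $W$ enters, and the recognition principle of Lemma \ref{LC-like} combined with the equality $\cL_W^\perp=\cL_{W_0}^\perp\cap\cL_{W_1}^\perp$. The difference lies in how the triangle is assembled. The paper starts from the two canonical triangles for $\gamma_{W_0}X$ and for $\gamma_{W_1}\lambda_{W_0}X$, glues them by the octahedron axiom to produce a candidate object $C$ sitting in a triangle $C\to X\to\lambda_{W_1}\lambda_{W_0}X\to C[1]$, and identifies $C\cong\gamma_WX$ by checking $C\in\cL_W$ and $\lambda_{W_1}\lambda_{W_0}X\in\cL_W^\perp$. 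You instead apply the exact functor $\gamma_W$ to the $\gamma_{W_0}$-triangle, use $\gamma_W\gamma_{W_0}\cong\gamma_{W_0}$ from Remark \ref{inclusion} (i), and reduce everything to the isomorphism $\gamma_WY\cong\gamma_{W_1}Y$ for $Y\in\cL_{W_0}^\perp$, which you verify by essentially the same membership checks the paper performs inside its octahedral diagram. Your route trades the octahedron axiom for the exactness of $\gamma_W$ as an endofunctor of $\cD$ (which does hold, being the right adjoint of the exact inclusion $i_W$, composed with $i_W$); isolating the statement ``$\gamma_W$ and $\gamma_{W_1}$ agree on $\cL_{W_0}^\perp$'' is a slightly cleaner packaging, and both proofs yield the same byproduct $\lambda_W\cong\lambda_{W_1}\lambda_{W_0}$ recorded in Remark \ref{alternative proof} (i).
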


\begin{proof} 
By virtue of Lemma \ref{LC-like}, we have triangles;
$$\begin{CD}
\gamma_{{W_{0}}} X @>>>  X @>>> \lambda_{{W_{0}}} X @>>>  \gamma_{{W_{0}}} X [1],
\end{CD}
$$
$$
\begin{CD}
\gamma_{{W_{1}}} \lambda_{{W_{0}}} X @>>> \lambda_{{W_{0}}}X @>>> \lambda_{{W_{1}}} \lambda_{{W_{0}}} X @>>> 
\gamma_{{W_{1}}} \lambda_{{W_{0}}} X[1].
\end{CD}
$$
It then follows from the octahedron axiom that there is a commutative diagram whose rows and columns are triangles: 
$$
\begin{CD}
@. X[-1]  @=  X[-1]    @. \\
@.     @VVV    @VVV     @. \\
\gamma_{{W_{1}}}\lambda_{{W_{0}}}X[-1] @>>> \lambda_{{W_{0}}} X [-1] @>>> \lambda_{{W_{1}}}\lambda_{{W_{0}}} X [-1] @>>>  \gamma_{{W_{1}}}\lambda_{{W_{0}}}X \\
@|     @VVV     @VVV     @| \\
\gamma_{{W_{1}}}\lambda_{{W_{0}}}X[-1] @>>> \gamma_{{W_{0}}} X @>>>  C @>>> \gamma_{{W_{1}}}\lambda_{{W_{0}}}X \\
@.     @VVV    @VVV     @. \\
@. X  @=  X    @. \\
\end{CD}
$$

Focusing on the triangle in the second row,  
we notice from Lemma \ref{perp} that both $\gamma_{{W_{1}}}\lambda_{{W_{0}}}X$ and  $\lambda _{{W_{0}}} X$  belong to  $\cL_{{W_{0}}}^{\perp}$.
Hence we have  $\lambda_{{W_{1}}}\lambda_{{W_{0}}}X\in \cL_{{W_{0}}}^\perp$. 
Then it follows that 
$\lambda_{{W_{1}}} \lambda_{{W_{0}}} X   \in  \cL_{{W_{1}}}^\perp  \cap \cL_{{W_{0}}}^{\perp} = \cL_{W}^\perp$.

On the other hand, in the third row above,  
we know  $\gamma_{{W_{1}}} \lambda_{{W_{0}}} X \in \cL_{{W_{1}}} \subseteq \cL_W$  and 
 $\gamma_{{W_{0}}} X \in \cL_{{W_{0}}} \subseteq \cL_W$. 
Consequently we have  $C \in \cL_W$. 
 
Taking a look at the third column above,  
since  $C \in  \cL_W$ and $\lambda_{{W_{1}}}\lambda_{{W_{0}}} X \in  \cL_{W}^\perp$, 
we deduce from Lemma \ref{LC-like}  that $\gamma_W X \cong C$.
Thus the third row is a required triangle. 
\end{proof}

The reader should compare Theorem \ref{theorem1} with \cite[Lemma 3.4 (4)]{BIK}.

\begin{remark}\label{alternative proof}
(i) Let $W$, $W_0$ and $W_1$ be as in the theorem. In its proof, we have shown an isomorphism  $\lambda_{{W_{1}}}\lambda_{{W_{0}}}\cong \lambda_W$.

(ii) Let $W$ be a subset of $\Spec R$, and assume that $\dim W=n$ is finite. 
Then we can give an alternative proof of the existence of $\gamma_W$ and $\lambda_W$.
In fact, in the case that $n=0$, $\gamma_W$ can be given explicitly by Theorem \ref{dimW=0 2}. 
Note that $\lambda_W$ exists at the same time.
Furthermore, if $n>0$, we can show the existence of $\gamma_W$ and $\lambda_W$ inductively by the formula in (i).
\end{remark}

Let $X$ be a complex of $R$-modules.
We say that $X$ is left (resp. right) bounded if $X^i=0$ for $i\ll 0$ (resp. $i\gg 0$). When $X$ is left and rignt bounded, $X$ is called bounded.
We denote by $\cK=K(\Inj R)$  the homotopy category of complexes of injective $R$-modules.
Moreover, we write $\cK^+$ for the full subcategory of $\cK$ consisting of left bounded complexes.
Let $a$ and $b$ be taken from $\Z \cup \{+\infty\}$, and assume that 
$a \leq b$. 
We denote by $\cK ^{[a,b]}$ the full subcategory of $\cK^+$ consisting of complexes $I$ such that $I^i=0$ for $i\notin [a,b]$ (cf. \cite[Notations 11.3.7 (ii)]{KS}).

Recall that the canonical functor $\cK\to \cD$ induces an equivalence $\cK^+\xrightarrow{\sim} \cD^+$ of triangulated categories, whose quasi-inverse sends a complex $X\in \cD^+$ to its minimal injective resolution $I\in \cK^+$.
In the following corollary, we identify $\cK^+$ with $\cD^+$ in this way.

\begin{corollary}\label{[a,b]}
Let $W$ be a subset of $\Spec R$, and 
assume that $n = \dim W$  is finite. 
Let $a,b \in \mathbb{Z} \cup \{+\infty \}$ with $a\leq b$ and $I\in \cK^{[a,b]}$. 
Then $\gamma_WI$ is belongs to $\cK^{[a-n,b]}$ under the equivalence $\cK^+\cong\cD^+$.
Therefore, $\gamma _W$ maps objects of $\cD^+$ to objects of $\cD^+$.
\end{corollary}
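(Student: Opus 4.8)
The plan is to prove Corollary~\ref{[a,b]} by induction on $n=\dim W$, using Theorem~\ref{theorem1} to reduce the dimension one step at a time. First I would treat the base case $n=0$. By Theorem~\ref{dimW=0 2} we have $\gamma_W I\cong\bigoplus_{\fp\in W}\RGamma_{V(\fp)}\RHom_R(R_\fp,I)$. Since $I\in\cK^{[a,b]}$ consists of injective $R$-modules concentrated in degrees $[a,b]$, each $\Hom_R(R_\fp,I)=\RHom_R(R_\fp,I)$ is again a complex of injective $R$-modules in degrees $[a,b]$, and then $\Gamma_{V(\fp)}$ of it (which computes $\RGamma_{V(\fp)}$ on complexes of injectives, by \cite[Lemma 3.5.1]{L}) is still a complex of injectives in degrees $[a,b]$. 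Taking the direct sum over $\fp\in W$ stays in $\cK^{[a,b]}=\cK^{[a-0,b]}$. In particular $\gamma_W I\in\cD^+$.

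For the inductive step, assume the statement holds for all subsets of dimension $<n$, and let $\dim W=n>0$. I would pick a one-step stratification: take $W_1$ to be the set of minimal primes of $W$ (primes $\fp\in W$ such that no $\fq\in W$ satisfies $\fq\subsetneq\fp$) and $W_0=W\setminus W_1$. Then $W_0$ is specialization-closed in $W$ — indeed if $\fp\in W_0$ and $\fq\in V(\fp)\cap W$ with $\fq\ne\fp$, then $\fq\supsetneq\fp$, so $\fq$ is not minimal in $W$, hence $\fq\in W_0$ — and $\dim W_0\le n-1$, while $\dim W_1=0$. Theorem~\ref{theorem1} gives a triangle
$$
\begin{CD}
\gamma_{W_0}X@>>>\gamma_W X@>>>\gamma_{W_1}\lambda_{W_0}X@>>>\gamma_{W_0}X[1]
\end{CD}
$$
for every $X$. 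Applying this with $X=I\in\cK^{[a,b]}$, I need to bound the outer two terms and then conclude by a standard triangle argument.

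The term $\gamma_{W_0}I$: by the induction hypothesis, since $\dim W_0\le n-1$, we get $\gamma_{W_0}I\in\cK^{[a-(n-1),b]}$. The term $\gamma_{W_1}\lambda_{W_0}I$: here the point is to first understand $\lambda_{W_0}I$. From the triangle $\gamma_{W_0}I\to I\to\lambda_{W_0}I\to\gamma_{W_0}I[1]$ and the bounds $I\in\cK^{[a,b]}\subseteq\cK^{[a-(n-1)-1,b+1]}$ and $\gamma_{W_0}I\in\cK^{[a-(n-1),b]}$, rotating gives that $\lambda_{W_0}I$ (a priori only in $\cD^+$, hence identified with a complex in $\cK^+$) has cohomology vanishing outside degrees $[a-(n-1),b+1]$, i.e.\ its minimal injective resolution lies in $\cK^{[a-n+1,b+1]}$; one has to be a little careful extracting the correct interval from the long exact cohomology sequence, but it is routine. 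Wait — I should be more careful here, as a crude estimate would lose too much. Let me instead note $H^i(\lambda_{W_0}I)$ fits in $H^i(I)\to H^i(\lambda_{W_0}I)\to H^{i+1}(\gamma_{W_0}I)$, so $H^i(\lambda_{W_0}I)=0$ for $i<a-(n-1)-1=a-n$ and for $i>b$; thus $\lambda_{W_0}I\in\cK^{[a-n,b]}$ as an object of $\cD^+\simeq\cK^+$. Then applying the $n=0$ case (Theorem~\ref{dimW=0 2}) to the set $W_1$ and the complex $\lambda_{W_0}I$, whose injective resolution is in $\cK^{[a-n,b]}$, yields $\gamma_{W_1}\lambda_{W_0}I\in\cK^{[a-n,b]}$. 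Finally, from the triangle above, $H^i(\gamma_W I)$ sits between $H^i(\gamma_{W_0}I)$ and $H^i(\gamma_{W_1}\lambda_{W_0}I)$ (long exact sequence), both of which vanish for $i<a-n$ and for $i>b$; hence $\gamma_W I\in\cD^+$ with cohomology in $[a-n,b]$, i.e.\ $\gamma_W I\in\cK^{[a-n,b]}$ under the equivalence. Since every $X\in\cD^+$ has a minimal injective resolution in some $\cK^{[a,\infty]}$, this also shows $\gamma_W$ preserves $\cD^+$.

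The main obstacle I anticipate is the bookkeeping in the inductive step: one must apply the $n=0$ case not to $I$ itself but to $\lambda_{W_0}I$, which is only known a posteriori to have an injective resolution in a controlled range, and one has to track exactly how the interval degrades — it is tempting to lose an extra degree. The cleanest way around this is to phrase everything in terms of cohomological vanishing ($H^i(-)=0$ for $i$ outside $[a-n,b]$) and use the long exact sequences of the two triangles, rather than trying to manipulate honest complexes of injectives directly; the identification $\gamma_W X\in\cD^+$ is then immediate, and the refined bound $\cK^{[a-n,b]}$ falls out of the same sequences. A secondary point worth checking is that $W_1$ (the minimal primes of $W$) indeed has $\dim W_1=0$, which is immediate from its definition, and that $W_0$ is genuinely specialization-closed in $W$, verified above.
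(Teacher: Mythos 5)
Your inductive skeleton is sound, and your stratification (splitting off the \emph{minimal} primes $W_1$ with $\dim W_1=0$ and putting the rest in $W_0$) is a legitimate mirror image of the paper's (which splits off the \emph{maximal} primes as $W_0$, so that the base case applies to $\gamma_{W_0}$ and the inductive hypothesis to $\gamma_{W_1}$). But there is a genuine gap in your decision to ``phrase everything in terms of cohomological vanishing.'' Membership in $\cK^{[a-n,b]}$ under the equivalence $\cK^+\cong\cD^+$ means the \emph{minimal injective resolution} has zero terms outside $[a-n,b]$; this is strictly stronger than having cohomology concentrated there. The lower bound does transfer (a minimal resolution starts in degree $\inf$ of the cohomology), but the upper bound does not: a complex with cohomology in $[a-n,b]$ can have injective resolution extending arbitrarily far to the right (e.g.\ a module of infinite injective dimension). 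Concretely, after you establish only that $H^i(\lambda_{W_0}I)=0$ for $i\notin[a-n,b]$, you may apply the base case to $\lambda_{W_0}I$ only with the interval $[a-n,+\infty]$, so you lose the bound $i\le b$ on $\gamma_{W_1}\lambda_{W_0}I$, and your final ``i.e.\ $\gamma_WI\in\cK^{[a-n,b]}$'' is a non sequitur. The upper bound is not cosmetic: it is exactly what is used later (case (2) of Theorem \ref{LD Principle} and Proposition \ref{vanishing dim M} need $\gamma_WY$ to be a \emph{bounded} complex of injectives with a known top degree).

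The repair is the route you abandoned as ``too crude,'' carried out at the level of complexes of injectives rather than cohomology. In the base case, Theorem \ref{dimW=0 2} computes $\gamma_{W_1}$ by applying the left exact functor $\bigoplus_{\fp}\Gamma_{V(\fp)}\Hom_R(R_\fp,-)$ termwise to a left-bounded complex of injectives, so it preserves the degree range of the \emph{terms}. In the inductive step, represent $\lambda_{W_0}I$ by the mapping cone of $\gamma_{W_0}I\to I$: with $\gamma_{W_0}I\in\cK^{[a-n+1,b]}$ (inductive hypothesis) and $I\in\cK^{[a,b]}$, the cone is a complex of injectives with terms in $[a-n,b]$, so $\lambda_{W_0}I\in\cK^{[a-n,b]}$ on the nose; then the base case gives $\gamma_{W_1}\lambda_{W_0}I\in\cK^{[a-n,b]}$, and the cone description of the triangle from Theorem \ref{theorem1} puts $\gamma_WI$ in $\cK^{[a-n,b]}$. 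This is exactly how the paper argues (with its $W_0$/$W_1$ swapped), and with this change your choice of stratification works equally well.
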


\begin{proof}
We prove the corollary by induction on $n$.
If $n=0$, then it follows from Theorem \ref{dimW=0 2} that $\gamma_WI\in \cK^{[a,b]}$. 

Suppose that  $n>0$. 
Let ${W_{0}}$ be the set of all prime ideals in  $W$ that are maximal with respect to the inclusion relation in $W$, and we set ${W_{1}} = W  \ \backslash \ {W_{0}}$.
Notice that  $\dim {W_{0}} = 0$ and $\dim {W_{1}} = n-1$.
Since there is a triangle  $\gamma _{{W_{0}}} I  \to  I  \to \lambda _{{W_{0}}} I \to \gamma _{{W_{0}}} I [1]$, and since 
both $I$ and  $\gamma _{{W_{0}}} I$ belong to $\cK ^{[a, b]}$, we see that 
$\lambda _{{W_{0}}} I \in \cK^{[a-1, b]}$. 
Hence  $\gamma _{{W_{1}}} \lambda _{{W_{0}}} I \in \cK^{[a-n, b]}$ by the inductive hypothesis. 
  
On the other hand, we have from Theorem \ref{theorem1} a triangle
$$
\gamma_{{W_{0}}} I \to \gamma_W I \to \gamma_{{W_{1}}}\lambda_{{W_{0}}} I \to \gamma _{{W_{0}}} I[1].
$$
Since $\gamma_{{W_{0}}} I \in \cK^{[a,b]}$  by Theorem \ref{dimW=0 2}, and  since  $\gamma_{{W_{1}}}\lambda_{{W_{0}}} I \in \cK^{[a-n,b]}$ as shown in above, it follows that $\gamma_W I \in \cK^{[a-n,b]}$ as desired. 
\end{proof}

If $\dim W$ is infinite, then it may happen that $\gamma_WX\notin \cD^+$ for a complex $X\in \cD^+$, see Example \ref{dim W infinite}.

%%%%%%%%%%%%%%%%%%%%%%%%%%%%%%
%%%%%%%%%%%%%%%%%%%%%%%%%%%%%%
%%%%%%%%%%%%%%%%%%%%%%%%%%%%%%
%%%%%%%%%%%%%%%%%%%%%%%%%%%%%%
%%%%%%%%%%%%%%%%%%%%%%%%%%%%%%
%%%%%%%%%%%%%%%%%%%%%%%%%%%%%%
%%%%%%%%%%%%%%%%%%%%%%%%%%%%%%
%%%%%%%%%%%%%%%%%%%%%%%%%%%%%%
%%%%%%%%%%%%%%%%%%%%%%%%%%%%%%
%%%%%%%%%%%%%%%%%%%%%%%%%%%%%%

\section{Local Duality Principle}
\label{4}
Local duality theorem is a duality concerning local cohomology modules with supports in closed subsets in schemes, which was presented in \cite{Ha} and \cite{Ha2}.  
Dualizing complexes or dualizing modules play a significant role there. 
However, Foxby $\text{\cite[Proposition 6.1]{F}}$ discovered a general principle that underlies local duality, which does not require dualizing complexes.
He considered such duality only for the right derived functor $\RGamma_V$ of the section functor $\Gamma_{V}$ with support in a specialization-closed subset $V$ of $\Spec R$. 
We propose the local duality principle as generalization of Foxby's theorem.

\begin{theorem}[Local Duality Principle]\label{LD Principle}
Let $W$ be a subset of $\Spec R$ and let  $X, Y \in \cD$.
Suppose that one of the following conditions holds: 
\begin{enumerate}
\item[$(1)$] $X\in \cD^-_{\rm fg}$, $Y \in \cD^+$ and $\dim W <+\infty$;
\item[$(2)$] $X\in \cD_{\rm fg}$, $Y$ is a bounded complex of injective $R$-modules and $\dim W<+\infty$;
\item[$(3)$] $W$ is generalization-closed. 
\end{enumerate}
Then there exist natural isomorphisms
$$
\gamma_W \RHom_R(X, Y)\cong \RHom_R(X,\gamma_W Y),$$
$$\lambda_W\RHom_R(X,Y)\cong \RHom_R(X,\lambda_WY).$$
\end{theorem}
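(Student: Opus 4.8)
The plan is to reduce everything to the already-established facts about $\gamma_W$ and $\lambda_W$ via the characterizing triangle of Lemma~\ref{LC-like}. Fix $X$ and $Y$ satisfying one of the three hypotheses, and write $Z=\RHom_R(X,Y)$. Apply $\RHom_R(X,-)$ to the triangle $\gamma_WY\to Y\to\lambda_WY\to\gamma_WY[1]$ to obtain a triangle
$$
\RHom_R(X,\gamma_WY)\to \RHom_R(X,Y)\to \RHom_R(X,\lambda_WY)\to \RHom_R(X,\gamma_WY)[1].
$$
By Lemma~\ref{LC-like}, it suffices to check that $\RHom_R(X,\gamma_WY)\in\cL_W$ and $\RHom_R(X,\lambda_WY)\in\cL_W^\perp$; then the unique isomorphisms furnished by that lemma give both desired natural isomorphisms simultaneously, and naturality follows from the uniqueness clause applied to morphisms in $X$ and $Y$.

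First I would verify the membership $\RHom_R(X,\gamma_WY)\in\cL_W$, i.e. $\supp\RHom_R(X,\gamma_WY)\subseteq W$. The key point is the standard support formula $\supp\RHom_R(X,Z)\subseteq\Supp X\cap\supp Z$ (valid when $X\in\cD_{\rm fg}$, and one needs $X$ or $Z$ suitably bounded so the $\RHom$ is computed correctly); combined with Corollary~\ref{[a,b]} (which requires $\dim W$ finite) to guarantee $\gamma_WY\in\cD^+$ under hypotheses (1),(2), and with the explicit form $\gamma_W\cong\RHom_R(S^{-1}R,-)$ from Proposition~\ref{generalize gamma p} under hypothesis (3), this gives $\supp\RHom_R(X,\gamma_WY)\subseteq\supp\gamma_WY\subseteq\overline W{}^s\cap\cdots$. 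The genuine subtlety is that $\supp\gamma_WY$ need not lie in $W$ when $W$ is not specialization-closed (Remark~\ref{non-zero gamma}), so one cannot conclude directly from a support estimate on $\gamma_WY$ alone; instead I expect the argument must run by induction on $\dim W$ using the triangle of Theorem~\ref{theorem1} (writing $W=W_0\sqcup W_1$ with $W_0$ the maximal primes of $W$), reducing the bad behaviour to the case $\dim W=0$, where Theorem~\ref{dimW=0 2} gives $\gamma_W\cong\bigoplus_{\fp}\RGamma_{V(\fp)}\RHom_R(R_\fp,-)$ and one checks the support claim prime-by-prime. In the generalization-closed case (3) the inductive scaffolding is unnecessary since $\gamma_W\cong\RHom_R(S^{-1}R,-)$ directly and $\RHom_R(X,\RHom_R(S^{-1}R,\gamma_WY))\cong\RHom_R(X\otimes_R S^{-1}R,\ldots)$ visibly has support in $W=U_S$.

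Second, for $\RHom_R(X,\lambda_WY)\in\cL_W^\perp$ I would show $\RHom_R(\kappa(\fp),\RHom_R(X,\lambda_WY))=0$ for every $\fp\in W$. By tensor–hom adjunction this is $\RHom_R(\RHom_R(\kappa(\fp),X)^{\vee}\otimes\cdots,\lambda_WY)$-type manipulation; more cleanly, using $X\in\cD_{\rm fg}$ one writes $\RHom_R(\kappa(\fp),\RHom_R(X,\lambda_WY))\cong\RHom_R(\kappa(\fp)\Lotimes X^{*},\lambda_WY)$ after passing to $R_\fp$ and replacing $X$ by a bounded complex of finitely generated free $R_\fp$-modules in the relevant range — here the boundedness hypotheses on $X$ (and the finiteness of $\dim W$, to keep $\lambda_WY$ cohomologically bounded below via Corollary~\ref{[a,b]}) are exactly what make the truncation harmless. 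Since $\kappa(\fp)\Lotimes X^{*}$ is a finite direct sum of shifts of copies of $\kappa(\fp)$ over $R_\fp$ and $\kappa(\fp)\in\cL_W$ (as $\fp\in W$), while $\lambda_WY\in\cL_W^\perp$ by construction, each summand contributes zero, giving the claim. Again hypothesis (3) can be handled more directly through the adjunction $(-\otimes_RS^{-1}R,\RHom_R(S^{-1}R,-))$ and the fact $\lambda_{U_S}$ kills everything supported in $U_S$.

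The main obstacle I anticipate is precisely the first membership under hypotheses (1) and (2): controlling $\supp\RHom_R(X,\gamma_WY)$ when $W$ is merely a finite-dimensional subset, where $\gamma_WY$ itself can have support strictly larger than $W$. Handling this cleanly essentially forces the induction on $\dim W$ via Theorem~\ref{theorem1}, and one must be careful that at each inductive step the complex $\lambda_{W_0}X$ (or $\lambda_{W_0}Y$) retains enough boundedness for the support formula and the $\cL_{W_1}^\perp$-argument to apply — this is where Corollary~\ref{[a,b]} and the case hypotheses are used in an essential, non-cosmetic way.
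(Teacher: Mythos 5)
Your reduction is the right one and matches the paper: apply $\RHom_R(X,-)$ to the triangle $\gamma_WY\to Y\to\lambda_WY\to\gamma_WY[1]$ and invoke Lemma~\ref{LC-like}, so that everything comes down to $\RHom_R(X,\gamma_WY)\in\cL_W$ and $\RHom_R(X,\lambda_WY)\in\cL_W^\perp$. For the second membership, however, you are working much too hard: for any $Z'\in\cL_W$ one has $\RHom_R(Z',\RHom_R(X,\lambda_WY))\cong\RHom_R(X,\RHom_R(Z',\lambda_WY))=0$, with no finiteness or boundedness hypothesis on $X$ whatsoever (this is the paper's Lemma~\ref{LWperp}); your detour through a dual $X^{*}$ and $\kappa(\fp)\Lotimes X^{*}$ is both unnecessary and unjustified for unbounded $X$.

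The genuine gap is in the first membership, and it stems from a misreading: you assert that $\supp\gamma_WY$ need not lie in $W$, but $\gamma_WY\in\cL_W=\{Z:\supp Z\subseteq W\}$ by the very definition of the colocalization functor, so $\supp\gamma_WY\subseteq W$ always. What Remark~\ref{non-zero gamma} and Remark~\ref{cohomology remark} show is that the supports of the individual cohomology modules $H^i(\gamma_WY)$ can escape $W$ --- a different statement. Consequently the induction on $\dim W$ via Theorem~\ref{theorem1} that you build around this false obstacle is not the mechanism, and you never supply the step that is actually needed, namely passing from $\supp\gamma_WY\subseteq W$ to $\supp\RHom_R(X,\gamma_WY)\subseteq W$. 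The paper does this by fixing $\fp\in W^c$, using Corollary~\ref{[a,b]} (this is the only place $\dim W<+\infty$ enters) to get $\gamma_WY\in\cD^+$ so that $\RHom_R(X,\gamma_WY)\Lotimes\kappa(\fp)\cong\RHom_{R_\fp}(X_\fp,(\gamma_WY)_\fp)\otimes^{\rm L}_{R_\fp}\kappa(\fp)$, and then applying Lemma~\ref{Foxby-Iyengar} twice together with an adjunction swap to reduce to $\fp\notin\supp\gamma_WY$. Case (2) of the first membership (which you do not address) is handled by truncating $X$ and a degree count against the bounded injective complex representing $\gamma_WY$. Finally, your treatment of case (3) relies on writing $W=U_S$, but an arbitrary generalization-closed set is not of the form $U_S$ (prime avoidance fails for infinite unions); the paper instead uses the identity $\cL_W=\cL_{W^c}^\perp$ of Lemma~\ref{localizing and colocalizing} and then Lemma~\ref{LWperp}, which works for every generalization-closed $W$.
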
\vspace{2mm}

Note that Foxby's theorem states the validity of the first isomorphism when, 
added to the condition (1), $W$  is a specialization-closed subset of $\Spec R$.

\begin{lemma}\label{LWperp}
Let $W$ be a subset of $\Spec R$.
If  $Z\in \cL_W^\perp$, then $\RHom _R (X,Z)\in \cL_W^\perp$ for any $X \in \cD$.
\end{lemma}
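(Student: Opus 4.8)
The plan is to verify the defining orthogonality condition directly. Recall from the excerpt that
$$\cL_W^\perp = \Set{Z \in \cD | \RHom_R(\kappa(\fp),Z)=0\ \text{for all}\ \fp \in W}.$$
So, given $Z\in \cL_W^\perp$ and $X\in \cD$, I must show $\RHom_R(\kappa(\fp),\RHom_R(X,Z))=0$ for every $\fp\in W$. The key is the standard adjunction (tensor–hom) isomorphism in the derived category,
$$\RHom_R(\kappa(\fp),\RHom_R(X,Z))\cong \RHom_R(\kappa(\fp)\Lotimes X, Z),$$
which holds for arbitrary objects of the unbounded derived category $\cD$ (no boundedness or finiteness hypotheses are needed, since $\kappa(\fp)=R/\fp$ localized, or one may replace $X$ by a $K$-projective resolution and $Z$ by a $K$-injective one). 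Thus it suffices to show the right-hand side vanishes.

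The crucial observation is that $\kappa(\fp)\Lotimes X$ lies in $\cL_{\{\fp\}}$, hence in $\cL_W$. Indeed, $\supp(\kappa(\fp)\Lotimes X)\subseteq \supp\kappa(\fp)=\{\fp\}$, since $\supp$ of a derived tensor product is contained in the intersection of the supports (or simply: $(\kappa(\fp)\Lotimes X)\Lotimes\kappa(\fq)\cong \kappa(\fp)\Lotimes(X\Lotimes\kappa(\fq))$, and $\kappa(\fp)\Lotimes\kappa(\fq)=0$ for $\fq\neq\fp$ because $\kappa(\fp)$ is a complex of $R_\fp$-modules annihilated by $\fp$). Therefore $\kappa(\fp)\Lotimes X\in \cL_W$, and since $Z\in\cL_W^\perp$ we get $\RHom_R(\kappa(\fp)\Lotimes X,Z)=0$. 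Combining with the adjunction isomorphism gives $\RHom_R(\kappa(\fp),\RHom_R(X,Z))=0$ for all $\fp\in W$, which is exactly the condition for $\RHom_R(X,Z)\in\cL_W^\perp$.

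I do not anticipate a genuine obstacle here; the proof is short. The only point requiring a little care is making sure the adjunction isomorphism is invoked at the right level of generality (unbounded complexes, arbitrary $X$ and $Z$) — this is classical, e.g. via $K$-projective/$K$-injective resolutions — and that the containment $\supp(\kappa(\fp)\Lotimes X)\subseteq\{\fp\}$ is justified cleanly, which follows from the behavior of small support under derived tensor product already used implicitly in the excerpt (cf. the proof of Proposition \ref{generalize gamma p}). If one prefers to avoid citing a support formula for tensor products, one can instead note directly that $\kappa(\fp)\Lotimes X\in\Loc\{\kappa(\fp)\}=\cL_{\{\fp\}}$: the full subcategory of $X$ with $\kappa(\fp)\Lotimes X\in\cL_{\{\fp\}}$ is localizing and contains $R$, hence is all of $\cD$.
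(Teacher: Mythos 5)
Your proof is correct, and the core mechanism (the derived tensor--hom adjunction) is the same one the paper exploits, but you route it differently. The paper tests orthogonality against an \emph{arbitrary} $Y\in\cL_W$ and simply swaps the two Hom's:
$$\RHom_R(Y,\RHom_R(X,Z))\cong\RHom_R(X,\RHom_R(Y,Z))=0,$$
the vanishing being immediate from $Y\in\cL_W$, $Z\in\cL_W^\perp$ (using that $\cL_W$ is closed under shifts, so $\RHom_R(Y,Z)=0$ as an object of $\cD$). This needs no support computation at all. You instead test only against the generators $\kappa(\fp)$, $\fp\in W$, which forces you to verify the auxiliary fact that $\kappa(\fp)\Lotimes X\in\cL_{\{\fp\}}$; your two justifications of that fact (the vanishing of $\kappa(\fp)\Lotimes\kappa(\fq)$ for $\fq\neq\fp$, or the observation that $\{X:\kappa(\fp)\Lotimes X\in\cL_{\{\fp\}}\}$ is a localizing subcategory containing $R$) are both sound. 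The trade-off: your version makes explicit \emph{why} the statement holds in terms of the generators of $\cL_W$, while the paper's version is shorter and avoids any appeal to supports or to the $\kappa(\fp)$-characterization of $\cL_W^\perp$. Both are complete proofs.
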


\begin{proof}
The lemma is clear from 
$$
\RHom _R(Y, \RHom_R (X,Z)) \cong  \RHom_R (X, \RHom_R (Y, Z)) = 0
$$
for  $Y \in \cL_W$. 
\end{proof}

\begin{lemma}\label{localizing and colocalizing}
Let $W$ be a generalization-closed subset of $\Spec R$.
Then it holds that
$$
\cL_W=\cL_{W^c}^\perp. 
$$
\end{lemma}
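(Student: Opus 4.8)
The plan is to reduce the statement to Lemma~\ref{BIK} together with the identity $\Ker\gamma_W=\cL_W^\perp$ recorded in \S\ref{2}. Put $V=W^c$; since $W$ is generalization-closed, $V$ is specialization-closed, so $\gamma_V$ coincides with $\RGamma_V$ and, crucially, Lemma~\ref{BIK} is available for $V$. The whole argument is then a short chain of equivalences.

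First I would recall that $\cL_V^\perp=\Ker\gamma_V$, so that for an object $Y\in\cD$ the condition $Y\in\cL_{W^c}^\perp$ is equivalent to $\gamma_V Y=0$. Next, since an object of $\cD$ vanishes precisely when its (small) support is empty, $\gamma_V Y=0$ is in turn equivalent to $\supp\gamma_V Y=\emptyset$.

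Then I would apply Lemma~\ref{BIK} to the specialization-closed set $V$, which gives $\supp\gamma_V Y=V\cap\supp Y$. Hence $\gamma_V Y=0$ if and only if $V\cap\supp Y=\emptyset$, i.e. if and only if $\supp Y\subseteq V^c=W$, i.e. if and only if $Y\in\cL_W$. Chaining these equivalences yields $\cL_{W^c}^\perp=\cL_W$.

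I expect no serious obstacle here; the only point to keep in view is that Lemma~\ref{BIK} applies only to specialization-closed subsets, which is exactly why the generalization-closedness of $W$ enters (it makes $W^c$ specialization-closed). One could alternatively obtain the inclusion $\cL_W\subseteq\cL_{W^c}^\perp$ directly from Lemma~\ref{perp} applied with $W_0=W^c$ inside $\Spec R$, but routing both inclusions through the support formula of Lemma~\ref{BIK} as above is the most economical presentation.
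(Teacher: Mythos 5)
Your proof is correct and follows essentially the same route as the paper's: both reduce the statement to the support formula of Lemma~\ref{BIK} applied to the specialization-closed set $W^c$, together with the identity $\Ker\gamma_{W^c}=\cL_{W^c}^\perp$ (the paper phrases the reverse inclusion via $\supp\lambda_{W^c}X=W\cap\supp X$, but this is the same computation run in the other direction).
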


\begin{proof}
We note that $W^c$ is specialization-closed.
The equality $
\cL_W=\cL_{W^c}^\perp$ is deduced from Lemma \ref{BIK} as follows:
If $X \in \cL_W$, then 
$\supp \gamma_{W^c}X = W^c\cap \supp X = \emptyset$ hence $\gamma _{W^c}X=0$ equivalently  $X = \lambda _{W^c}X \in \cL_{W^c}^\perp$.
On the contrary, if  $X \in \cL_{W^c}^\perp$ then 
$\supp \lambda _{W^c} X = W\cap \supp X$ therefore $X = \lambda _{W^c}X$ has small support in $W$ thus  $X \in \cL_W$. 
\end{proof}

Let $X$ be a complex of $R$-modules and  $n$ be an integer.
The cohomological truncations $\sigma_{\leq n}X$ and $\sigma_{> n}X$ are defined as follows: 
\begin{align*}\sigma_{\leq n}X&=(\cdots \to X^{n-2}\to X^{n-1}\to \Ker d^{n}_X\to 0\to\cdots )\\
\sigma_{> n}X&=(\cdots\to 0\to \Image d_X^{n}\to X^{n+1}\to X^{n+2}\to\cdots )
\end{align*}
See \cite[Chapter I; \S 7]{Ha} for details.

\begin{proof}[Proof of Theorem \ref{LD Principle}]
Applying the functor  $\RHom _R (X,-)$ to the triangle 
$\gamma_W Y \to Y \to \lambda_W Y \to \gamma _W Y [1]$, 
we obtain a triangle of the form; 
$$
\RHom _R (X, \gamma_W Y) \to \RHom _R (X,Y) \to \RHom_R (X, \lambda_W Y) \to \RHom _R (X, \gamma_W Y) [1] .
$$
It follows from  Lemma \ref{LC-like} that, to prove the desired isomorphisms, 
we only have to show that 
$$
\RHom_R(X,\gamma_WY)\in \cL_W
 \ \ \text{and}  \ \ 
\RHom _R (X,\lambda_W Y) \in \cL_W^\perp. 
$$
Since  $\lambda _W Y \in \cL_W ^{\perp}$, we see from Lemma \ref{LWperp} that  $\RHom_R(X,\lambda_WY)\in \cL_W^\perp$.
Thus it remains to show that $\RHom_R(X,\gamma_WY)\in \cL_W$. 

Case (1): Let $\fp  \in W^c$.
We want to show that  $\RHom_R (X, \gamma_W Y) \Lotimes  \kappa ( \fp ) = 0$. 
Since $X \in \cD^-_{\rm fg}$ and $Y \in \cD^+$,  Corollary $\text{\ref{[a,b]}}$ implies $\gamma_WY\in \cD^+$,
so it follows that
$$
 \RHom_R (X, \gamma_W Y) \Lotimes \kappa(\fp) \cong \RHom_{R_\fp} ( X_\fp ,(\gamma_WY)_\fp) \otimes^{\rm L}_{R_\fp}\kappa(\fp).
 $$ 
Now thanks to Lemma \ref{Foxby-Iyengar} together with this isomorphism, it is sufficient to show that 
$\RHom_{R_\fp} \left(\kappa(\fp), \RHom_{R_\fp} (X_\fp, (\gamma_WY)_\fp)\right)=0$. 
Noting that 
$$\RHom_{R_\fp}\left(\kappa(\fp), \RHom_{R_\fp}(X_\fp,(\gamma_WY)_\fp)\right)\cong 
\RHom_{R_\fp}\left(X_\fp, \RHom_{R_\fp}(\kappa(\fp),(\gamma_WY)_\fp)\right),
$$ 
we have sufficiently to show that $\RHom _{R_\fp} ( \kappa(\fp), ( \gamma_W Y )_\fp ) =0$. 
However, by using Lemma \ref{Foxby-Iyengar} again, we see that this is equivalent to show that  $(\gamma_W Y) \otimes^{\rm L}_R \kappa(\fp)=0$, {\it i.e.}, $\fp \not\in  \supp \gamma  _W Y$.
The last is clear, since  $\supp \gamma _W Y \subseteq W$ and $\fp \not\in W$.

Case (2): As in the case (1), taking $\fp\in W^c$, we show $\RHom_R (X, \gamma_W Y) \Lotimes  \kappa ( \fp ) = 0$.
We consider the triangle
$\sigma_{\leq n}X\to X\to \sigma_{> n}X\to (\sigma_{\leq n}X)[1]$ for an integer $n$.
Since $\sigma_{\leq n}X\in \cD^{-}_{\rm fg}$, we have $ \RHom_R(\sigma_{\leq n}X,\gamma_WY)\in \cL_W$ by the case (1).
Hence, applying $\RHom_R(-,\gamma_WY)\Lotimes\kappa(\fp)$ to the triangle, 
we obtain an isomorphism
$$\RHom_R(X,\gamma_WY)\Lotimes\kappa(\fp)\cong \RHom_R(\sigma_{>n}X,\gamma_WY)\Lotimes\kappa(\fp).$$
Let $i$ be any integer.  It suffices to show that $$H^0\left(\RHom_R(\sigma_{>n}X,\gamma_WY[i])\Lotimes\kappa(\fp)\right)=0$$
for some $n$.
By Corollary \ref{[a,b]}, $\gamma_WY$ is isomorphic to a bounded complex $I$ of injective $R$-modules, so that there is an integer $m$ with $I^j=0$ for $j>m$.
Moreover, any element of $H^0(\RHom_R(\sigma_{>n}X,\gamma_WY[i]))\cong \Hom_{\cD}(\sigma_{>n}X,I[i])$ is represented by a chain map $\sigma_{>n}X\to I[i]$.
Thus, taking $n$ with $n>m-i$, we see that $H^j(\RHom_R(\sigma_{>n}X,\gamma_WY[i]))\cong\Hom_{\cD}(\sigma_{>n}X,I[i+j])\big)=0$ for all $j\geq 0$. 
Then it is easily seen that 
$H^0\left(\RHom_R(\sigma_{>n}X,\gamma_WY[i])\Lotimes\kappa(\fp)\right)=0$.

Case (3): By Lemma \ref{localizing and colocalizing}, we have $\gamma _W Y \in \cL_W = \cL_{W^c}^{\perp}$, thus it follows from Lemma \ref{LWperp} that 
$\RHom _R (X, \gamma _W Y) \in  \cL_{W^c}^{\perp} = \cL_W$  as desired. 
\end{proof}

\begin{remark}\label{LDP remark}
In the case (3), the isomorphisms in the theorem are also proved by \cite[Theorem 5.14]{AJS}.
\end{remark}

When $R$ admits a dualizing complex $D_R$,  we write $X^\dagger=\RHom_R(X,D_R)$ for $X\in \cD$. Then we have $X\cong X^{\dagger\dagger}$ for $X\in \cD_{\rm fg}$, see 
\cite[Chapter V; \S 2]{Ha}. The following result is the generalized form of the local duality theorem \cite[Chapter V; Theorem 6.2]{Ha}. 

\begin{corollary}
\label{LDT}
Assume that $R$ admits a dualizing complex $D_R$.
Let $W$ be a subset of $\Spec R$ and $X\in \cD_{\rm fg}$.
Then we have a natural isomorphism 
$$\gamma_WX\cong \RHom_R(X^\dagger, \gamma_WD_R).$$
\end{corollary}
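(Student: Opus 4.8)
The plan is to apply the Local Duality Principle (Theorem \ref{LD Principle}) with the dualizing complex $D_R$ playing the role of $Y$, together with the reflexivity $X \cong X^{\dagger\dagger}$ for $X \in \cD_{\rm fg}$. Concretely, I would start from $\gamma_W X$ and rewrite $X$ as $X^{\dagger\dagger} = \RHom_R(X^\dagger, D_R)$, so that
$$
\gamma_W X \cong \gamma_W \RHom_R(X^\dagger, D_R).
$$
Now I want to pull $\gamma_W$ inside to obtain $\RHom_R(X^\dagger, \gamma_W D_R)$. For this I need to check that the hypotheses of one of the three cases of Theorem \ref{LD Principle} are met with the complex $X^\dagger$ in the role of the first argument and $D_R$ in the role of the second.

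The key point is that $D_R$ is (isomorphic to) a bounded complex of injective $R$-modules — this is part of the definition of a dualizing complex — and that $X^\dagger = \RHom_R(X, D_R) \in \cD_{\rm fg}$ whenever $X \in \cD_{\rm fg}$, again a standard property of dualizing complexes (cf. \cite[Chapter V; \S 2]{Ha}). There is, however, a subtlety: case (2) of Theorem \ref{LD Principle} additionally requires $\dim W < +\infty$, whereas Corollary \ref{LDT} as stated imposes no such restriction on $W$. I would resolve this by observing that a ring with a dualizing complex has finite Krull dimension, so $\dim W \leq \dim R < +\infty$ automatically for every subset $W \subseteq \Spec R$. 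With this remark in hand, case (2) applies: taking $X' = X^\dagger \in \cD_{\rm fg}$ and $Y = D_R$ a bounded complex of injectives, Theorem \ref{LD Principle} yields the natural isomorphism
$$
\gamma_W \RHom_R(X^\dagger, D_R) \cong \RHom_R(X^\dagger, \gamma_W D_R).
$$
Composing this with the reflexivity isomorphism $\gamma_W X \cong \gamma_W X^{\dagger\dagger}$ gives the claimed $\gamma_W X \cong \RHom_R(X^\dagger, \gamma_W D_R)$.

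The only real work is bookkeeping: I should confirm that all the isomorphisms involved are natural in $X$ (the reflexivity map $X \to X^{\dagger\dagger}$ is a natural isomorphism on $\cD_{\rm fg}$, and the isomorphism of Theorem \ref{LD Principle} is natural by construction), and that $\gamma_W$ being applied to $\RHom_R(X^\dagger, D_R)$ rather than literally to $X$ causes no issue since the two are identified via a natural isomorphism. I do not expect any genuine obstacle here; the proof is essentially a one-line deduction from Theorem \ref{LD Principle} once the finite-dimensionality remark is made explicit. If one prefers to avoid even that remark, an alternative is to note that $D_R \in \cD^+$ and $X^\dagger \in \cD_{\rm fg}$, but $X^\dagger$ need not lie in $\cD^-$, so case (1) does not directly apply; hence invoking case (2) together with $\dim R < \infty$ is the cleanest route.
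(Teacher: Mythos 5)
Your proof is correct and is exactly the argument the paper intends: the corollary is deduced from case (2) of Theorem \ref{LD Principle} applied to $X^\dagger\in\cD_{\rm fg}$ and the bounded complex of injectives $D_R$, combined with the reflexivity $X\cong X^{\dagger\dagger}$. Your remark that the existence of a dualizing complex forces $\dim R<+\infty$ (so the $\dim W<+\infty$ hypothesis is automatic) is a worthwhile point that the paper leaves implicit.
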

\vspace{1mm}

The second author and Maiko Ono had proved the following theorem when $W$ is specialization-closed, and had asked if it holds for an arbitrary subset $W$, see \cite[Theorem 2.3, Question 2.5]{OY}.

\color{black}
\begin{theorem}[AR Principle]\label{ARprinciple}
Let $X,I \in \cD$. 
Suppose that a subset  $W$ of $\Spec R$ satisfies the condition $\dim W <  +\infty$. 
We assume furthermore that the following conditions hold for an integer $n$:
\begin{enumerate}[{\rm (1)}]
\item $I$ is a bounded complex of injective $R$-modules with $I^i=0$ for $i>n$;
\item $\sigma_{\leq -1} X \in \cL_W$.
\end{enumerate}
Then there exists a natural isomorphism
$$
\sigma_{> n}\RHom_R(X,\gamma_W I)\cong \sigma_{> n}\RHom_R(X,I).
$$
\end{theorem}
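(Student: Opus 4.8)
The plan is to reduce the claimed isomorphism to the Local Duality Principle (Theorem~\ref{LD Principle}) applied to a suitably truncated complex, using that the statement is only asserting agreement in cohomological degrees $>n$. First I would observe that the natural morphism $\varepsilon_Y\colon\gamma_WI\to I$ fits into a triangle $\gamma_WI\to I\to\lambda_WI\to\gamma_WI[1]$, and applying $\RHom_R(X,-)$ gives a triangle
$$
\RHom_R(X,\gamma_WI)\to\RHom_R(X,I)\to\RHom_R(X,\lambda_WI)\to\RHom_R(X,\gamma_WI)[1].
$$
So it suffices to show that $\sigma_{>n}\RHom_R(X,\lambda_WI)=0$, i.e.\ that $H^j\RHom_R(X,\lambda_WI)=0$ for all $j>n$. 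The key point is that the hypothesis $\sigma_{\leq-1}X\in\cL_W$ is exactly what is needed to control $\RHom_R(X,\lambda_WI)$ in high degrees, since $\lambda_WI\in\cL_W^\perp$ kills all of $\cL_W$.

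The main step is therefore the degreewise vanishing. Write $X$ as an extension: for any integer $m$ there is a triangle $\sigma_{\leq m}X\to X\to\sigma_{>m}X\to(\sigma_{\leq m}X)[1]$. For $m=-1$ we have $\sigma_{\leq-1}X\in\cL_W$, and since $\lambda_WI\in\cL_W^\perp$ we get $\RHom_R(\sigma_{\leq-1}X,\lambda_WI)=0$; hence $\RHom_R(X,\lambda_WI)\cong\RHom_R(\sigma_{>-1}X,\lambda_WI)$. Now I replace $X$ by $\sigma_{>-1}X$, which is concentrated in nonnegative degrees, and use the hypothesis that $I$ (hence also $\gamma_WI$, by Corollary~\ref{[a,b]}, and hence $\lambda_WI$ by the triangle) is isomorphic to a bounded-above complex of injectives vanishing in degrees $>n$ resp.\ in degrees $>n+\dim W$ and $>n$ respectively. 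Computing $\RHom_R$ from a complex concentrated in degrees $\geq0$ into a complex of injectives concentrated in degrees $\leq n$ shows that the $\Hom$-complex vanishes in degrees $>n$, giving $H^j\RHom_R(\sigma_{>-1}X,\lambda_WI)=0$ for $j>n$ — indeed $\lambda_WI$ lives in degrees $[a-1,n]$ if $I\in\cK^{[a,n]}$, but more carefully one uses that $\lambda_WI\in\cK^{[?,n]}$: the triangle $\gamma_WI\to I\to\lambda_WI$ with $I,\gamma_WI$ having top degree $\leq n$ forces $\lambda_WI$ to have top degree $\leq n$ as well. Since $\sigma_{>-1}X$ is concentrated in degrees $\geq0$, any chain map $\sigma_{>-1}X\to\lambda_WI[j]$ with $j>n$ must be zero up to homotopy, which is the desired vanishing.

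I expect the main obstacle to be the bookkeeping around whether $\RHom_R(X,\lambda_WI)$ can really be computed naively from a $\Hom$-complex of the truncation $\sigma_{>-1}X$ into a bounded complex of injectives: one must be careful that $\sigma_{>-1}X$ need not be bounded above, so the $\Hom$-complex in each fixed degree is still a product and one is invoking that the degree-$j$ term of $\Hom^\bullet(\sigma_{>-1}X,\lambda_WI)$ involves only finitely many factors once $\lambda_WI$ is bounded. This is precisely the kind of argument carried out in Case~(2) of the proof of Theorem~\ref{LD Principle}, so I would import that technique: represent a class in $H^j\Hom_{\cD}(\sigma_{>-1}X,\lambda_WI[j])$ by an honest chain map and observe it is forced to be null-homotopic when $j>n$ because the target is concentrated in degrees $\leq n$ while the source sits in degrees $\geq0$. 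Once the vanishing $\sigma_{>n}\RHom_R(X,\lambda_WI)=0$ is established, the long exact cohomology sequence of the triangle above shows $\sigma_{>n}\RHom_R(X,\gamma_WI)\xrightarrow{\ \sim\ }\sigma_{>n}\RHom_R(X,I)$, and naturality is inherited from naturality of $\varepsilon$ and of $\RHom_R(X,-)$.
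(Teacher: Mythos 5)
Your reduction to the vanishing of $\sigma_{>n}\RHom_R(X,\lambda_WI)$ is not sufficient, and this is a genuine gap. From the long exact cohomology sequence of the triangle $\RHom_R(X,\gamma_WI)\to\RHom_R(X,I)\to\RHom_R(X,\lambda_WI)\to\RHom_R(X,\gamma_WI)[1]$, the vanishing $H^j\RHom_R(X,\lambda_WI)=0$ for $j>n$ only yields that $H^j\RHom_R(X,\gamma_WI)\to H^j\RHom_R(X,I)$ is surjective for $j>n$ and bijective for $j>n+1$. Injectivity in the boundary degree $j=n+1$ requires the connecting map $H^{n}\RHom_R(X,\lambda_WI)\to H^{n+1}\RHom_R(X,\gamma_WI)$ to vanish, and your degree count gives no control of $H^{n}\RHom_R(X,\lambda_WI)$: it is nonzero already for $X=R$ and an injective module $I$ in degree $0$ with $\gamma_WI=0\neq I$ (e.g.\ $W=\{\fm\}$ over a DVR and $I=E_R(R/(0))$). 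So the final step ``once the vanishing is established, the long exact sequence shows the isomorphism'' fails at $j=n+1$. A minor additional slip: with the paper's convention $\sigma_{>-1}X$ has the term $\Image d_X^{-1}$ in degree $-1$, so it is concentrated in degrees $\geq -1$; one must first pass to the quasi-isomorphic truncation concentrated in degrees $\geq 0$ before running the degree count.

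The repair — and this is exactly the argument the paper invokes when it says that the proof of \cite[Theorem 2.3]{OY} goes through once Corollary \ref{[a,b]} guarantees that $\gamma_WI$ is a bounded complex of injectives vanishing in degrees $>n$ — is to truncate $X$ rather than $I$. Apply $\RHom_R(-,\gamma_WI)$ and $\RHom_R(-,I)$ to the triangle $\sigma_{\leq -1}X\to X\to \sigma_{>-1}X\to(\sigma_{\leq-1}X)[1]$ and compare the two resulting triangles via $\gamma_WI\to I$. On the terms $\RHom_R(\sigma_{\leq-1}X,-)$ the comparison map is an isomorphism in every degree, because $\sigma_{\leq-1}X\in\cL_W$ and $\gamma_W$ is right adjoint to the inclusion $\cL_W\hookrightarrow\cD$. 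On the terms $\RHom_R(\sigma_{>-1}X,-)$ both sides have vanishing cohomology in all degrees $>n$, since the source is quasi-isomorphic to a complex concentrated in degrees $\geq0$ while both $I$ and $\gamma_WI$ are bounded complexes of injectives vanishing in degrees $>n$. The long exact sequences then give $H^j\RHom_R(X,Y)\cong H^j\RHom_R(\sigma_{\leq-1}X,Y)$ for $j>n$ and $Y\in\{\gamma_WI,\,I\}$, and the isomorphism on $H^j$ for every $j>n$ follows with no boundary problem. Your proposal correctly identifies the two essential inputs (Corollary \ref{[a,b]} and condition (2)), but they must be combined through the truncation of $X$, not through the cone of $\gamma_WI\to I$.
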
\vspace{2mm}

We are now able to prove that this theorem holds in general.
As we have shown in Corollary \ref{[a,b]},  $\gamma _W I$ is isomorphic to a bounded complex $J$ of injective $R$-modules with $J^i=0$ for $i>n$.
Then one can observe that the proof of \cite[Theorem 2.3]{OY} works well. 

The AR Principle is a version of classical Auslander-Reiten duality theorem in terms of complexes, see \cite[Corollary 3.2]{OY}.

%%%%%%%%%%%%%%%%%%%%%%%%
%%%%%%%%%%%%%%%%%%%%%%%%
%%%%%%%%%%%%%%%%%%%%%%%%
%%%%%%%%%%%%%%%%%%%%%%%%
%%%%%%%%%%%%%%%%%%%%%%%%
%%%%%%%%%%%%%%%%%%%%%%%%
%%%%%%%%%%%%%%%%%%%%%%%%
%%%%%%%%%%%%%%%%%%%%%%%%
%%%%%%%%%%%%%%%%%%%%%%%%
%%%%%%%%%%%%%%%%%%%%%%%%

\section{Relation with completion}
\label{5}

In this section, we shall explain the relationship between $\lambda_W$ and left derived functors of completion functors. Furthermore, we give some nontrivial examples of colocalization functors $\gamma_W$, for which $\gamma_W I$ has a non-zero negative cohomology module even for an injective $R$-module $I$. 

Let $\fa$ be an ideal of $R$ which defines a closed subset  $V(\fa)$ of $\Spec R$. 
We denote by $\Lambda^{V(\fa)}$ the $\fa$-adic completion functor $\varprojlim (-\otimes _RR/\fa ^n)$ defined on $\Mod R$. 
It is known that the left derived functor $\LLambda ^{V(\fa)}$ of $\Lambda^{V(\fa)}$ is a right adjoint to $\RGamma _{V(\fa)}$ by Greenlees and May \cite{GM} and  Alonso Tarr\' io, Jerem\' ias L\' opez and  Lipman \cite{AJL}.
\color{black}  
One also finds an outline of the proof of this fact in \cite[\S 4; p.\ 69]{L}. 

Recall that $\lambda_{V(\fa)^c}$ is a left adjoint to the inclusion functor $j_{V(\fa)^c}:\cL_{V(\fa)^c}\hookrightarrow \cD$.
Now we shall prove that $\LLambda ^{V(\fa)}$ coincides with $\lambda_{V(\fa)^c}$.
By the universality of derived functors, there is a natural morphism $X \to \LLambda ^{V(\fa)} X$ for any $X \in \cD$
, from which we have a triangle of the form
$$\begin{CD}
C @>>> X @>>> \LLambda ^{V(\fa)}X @>>> C [1]. 
\end{CD}$$
Applying $\RGamma_{V(\fa)}$ to this triangle,
we have the following triangle
$$\begin{CD}
\RGamma_{V(\fa)} C @>>> \RGamma_{V(\fa)} X @>>> \RGamma_{V(\fa)} \LLambda ^{V(\fa)}X @>>>\RGamma_{V(\fa)} C [1]. 
\end{CD}$$
By \cite[Corollary 5.1.1 (ii)]{AJL}, 
$\RGamma_{V(\fa)} X \rightarrow \RGamma_{V(\fa)} \LLambda ^{V(\fa)}X$ is an isomorphism.
Hence 
$\RGamma_{V(\fa)} C=0$, and thus 
we have $C\in \cL_{V(\fa)}^\perp=\cL_{V(\fa)^c}$ by Lemma \ref{localizing and colocalizing}.
On the other hand, since $\LLambda^{V(\fa)}$ is a right adjoint to $\RGamma_{V(\fa)}$, we have $$\RHom_R(\kappa(\fp), \LLambda ^{V(\fa)}X)\cong \RHom_R(\RGamma_{V(\fa)}\kappa(\fp), X)=0$$ for any $\fp \in V(\fa)^c$.
This implies $\LLambda ^{V(\fa)}X\in \cL_{V(\fa)^c}^\perp$.
Thus it follows that $\gamma_{V(\fa)^c} X\cong C$ and $\lambda_{V(\fa)^c}X\cong \LLambda^{V(\fa)} X$ by Lemma \ref{LC-like}. 

We summarize this fact in the following. 

\begin{proposition}\label{left derived completion}
Let $\fa$ be an ideal of $R$.
Then $\lambda_{V(\fa)^c}$ is isomorphic to the left derived functor $\LLambda^{V(\fa)}$ of the $\fa$-adic completion functor $\Lambda^{V(\fa)}$.
\end{proposition}

\begin{remark}\label{RHom adjoint}
Let $\fa$ be an ideal of $R$ and $W$ be  a specialization-closed subset of $\Spec R$.
Note that it is also proved that $\LLambda^{V(\fa)}$ is isomorphic to $\RHom_R(\RGamma_{V(\fa)} R,-)$ in \cite{GM} and \cite{AJL}. 
More generally, we see that $\RHom_R(\RGamma_{W} R,-)$ is a right adjoint to 
$\RGamma_W$. 
Furthermore, by using Lemma \ref{localizing and colocalizing}, it is not hard to see that $\lambda_{W^c}$ is a right adjoint to $\gamma_W$.
Thus it follows from the uniqueness of adjoint functors  that there is an isomorphism 
$$\lambda_{W^c}\cong \RHom_R(\RGamma_{W} R,-).$$
This fact and Proposition \ref{left derived completion} is essentially stated in \cite{BIK3}, where
$\gamma_{W^c}$ and $\lambda_{W^c}$ appear as $V^{W}$ and $\Lambda^{W}$ respectively. 
\end{remark}

Now we are ready to give an example as we have previously announced. 
 
\begin{example}\label{negative cohomology}
Let $(R,\fm,k)$ be a local ring of dimension $d$ and $\widehat{R}$ be the $\fm$-adic completion of $R$.
Let $D_{\widehat{R}}$ be a dualizing complex of $\widehat{R}$ with $\Ext^d_{\widehat{R}}(k,D_{\widehat{R}})\cong k$. 
We regard $D_{\widehat{R}}$ as a complex of $R$-modules in a natural way.
Using the isomorphism $\LLambda^{V(\fm)}\RGamma_{V(\fm)}\cong \LLambda^{V(\fm)}$ by $\text{\cite[Corollary 5.1.1 (i)]{AJL}}$, we can show that $\LLambda^{V(\fm)}E_R(k)\cong D_{\widehat{R}}[d]$.
Hence it follows from Proposition $\text{\ref{left derived completion}}$ that $\lambda_{V(\fm)^c}E_R(k)\cong D_{\widehat{R}}[d]$.
Now we suppose that $d>1$.
Then, considering the triangle
$$
\begin{CD}
\gamma_{V(\fm)^c}E_R(k)  @>>>  E_R(k)  @>>>  \lambda_{V(\fm)^c}E_R(k) @>>> \gamma_{V(\fm)^c}E_R(k)[1],
\end{CD}
$$
we have $H^{-d+1}(\gamma_{V(\fm)^c} E_R(k))\neq 0$ and $-d+1<0$.
\qed
 \end{example}

\begin{remark}\label{cohomology remark}
Let $X\in \cD$.
If $W$ is specialization-closed, then it holds that $\supp H^i(\gamma_W X)\subseteq W$ for all $i\in \mathbb{Z}$, since $\gamma_W\cong \RGamma_W$.
However we see from Example \ref{negative cohomology} that the inclusion relation $\supp H^i(\gamma_WX)\subseteq W$ dose not necessarily hold in general. 
\end{remark}

\color{black}

We now give an example such that $\gamma_WI\notin \cD^+$ even for an injective $R$-module $I$.

\begin{example}\label{dim W infinite}
We assume that $\dim R=+\infty$. 
Let $W$ be the set of maximal ideals of $R$.
Then we can show that $\gamma_{W^c}(\bigoplus_{\fm\in W}E_R(R/\fm))\notin \cD^+$. 
In fact,  it is clear that $\RGamma_W\cong \bigoplus_{\fm\in W}\RGamma_{V(\fm)}$. 
Thus it follows from Remark \ref{RHom adjoint} that 
$$\lambda_{W^c}\cong \RHom_R(\RGamma_WR,-)\cong \prod_{\fm\in W}\RHom_R(\RGamma_{V(\fm)}R,-)\cong \prod_{\fm\in W}\LLambda^{V(\fm)}.$$
Then, by Example \ref{negative cohomology}, we see that  $\lambda_{W^c}(\bigoplus_{\fm\in W}E_R(R/\fm))\notin\cD^+$. Hence we have $\gamma_{W^c}(\bigoplus_{\fm\in W}E_R(R/\fm))\notin\cD^+$.
\qed
\end{example}

\begin{remark}\label{subsequent}
More generally, we can prove that $\lambda_{W^c}$ is isomorphic to 
$$\prod_{\fp\in W}\LLambda^{V(\fp)}(-\otimes_R R_\fp)$$
 for an arbitrary  subset $W$ of $\Spec R$ with $\dim W=0$. The proof of this fact will be given in our subsequent work \cite{NY}.
\end{remark}

%%%%%%%%%%%%%%%%%%%%%%%%%%%%%%%%%
%%%%%%%%%%%%%%%%%%%%%%%%%%%%%%%%%
%%%%%%%%%%%%%%%%%%%%%%%%%%%%%%%%%
%%%%%%%%%%%%%%%%%%%%%%%%%%%%%%%%%
%%%%%%%%%%%%%%%%%%%%%%%%%%%%%%%%%
%%%%%%%%%%%%%%%%%%%%%%%%%%%%%%%%%
%%%%%%%%%%%%%%%%%%%%%%%%%%%%%%%%%
%%%%%%%%%%%%%%%%%%%%%%%%%%%%%%%%%

\section{Vanishing for cohomology modules}
\label{6}

Let  $V$  be a specialization-closed subset of $\Spec R$. 
In such a classical case, Grothendieck showed that if  $M$ is a finitely generated $R$-module, then $H^i_V (M)=H^i(\RGamma_VM)=0$ for $i<0$ and $i>\dim M$.

Our aim in this section is to prove the same type of vanishing theorem holds  for the colocalization functor $\gamma_W$ with support in $W$,  where $W$ is not necessarily specialization-closed.

Notice from Example \ref{negative cohomology} that it is truly nontrivial even to prove that $H^i(\gamma_W M)=0$ for $i<0$.

\begin{proposition}\label{negative vanishing}
Let $W$ be a subset of  $\Spec R$ and suppose that $\dim W $ is finite. 
Then we have $H^i (\gamma_WM) =0$ for any $i<0$ and for any finitely generated $R$-module $M$.
\end{proposition}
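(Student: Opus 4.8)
The plan is to exhibit a finite filtration of $\gamma_W M$ whose graded pieces are visibly concentrated in non-negative degrees; morally this is an induction on $n=\dim W$, but it is cleanest to set it up directly. Write $W=W_0^{(0)}\sqcup W_0^{(1)}\sqcup\cdots\sqcup W_0^{(n)}$, where $W_0^{(i)}$ is the set of primes maximal (under inclusion) in $W^{(i)}:=W\setminus(W_0^{(0)}\cup\cdots\cup W_0^{(i-1)})$; each $W_0^{(i)}$ has dimension $0$ and is specialization-closed in $W^{(i)}$, and $W^{(i+1)}=W^{(i)}\setminus W_0^{(i)}$ has dimension $n-i$. Applying Theorem~\ref{theorem1} to the decomposition $W^{(i)}=W_0^{(i)}\sqcup W^{(i+1)}$ and to the object $Z_i$ (with $Z_0=M$ and $Z_{i+1}:=\lambda_{W_0^{(i)}}(Z_i)$) produces, at each stage, a triangle $\gamma_{W_0^{(i)}}Z_i\to\gamma_{W^{(i)}}Z_i\to\gamma_{W^{(i+1)}}Z_{i+1}\to\gamma_{W_0^{(i)}}Z_i[1]$; chaining these from $i=0$ (where $\gamma_{W^{(0)}}Z_0=\gamma_W M$) down to $i=n$ (where $\gamma_{W^{(n)}}Z_n=\gamma_{W_0^{(n)}}Z_n$) exhibits $\gamma_W M$ as an iterated extension of the objects $P_i:=\gamma_{W_0^{(i)}}(Z_i)$. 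Since $\cD^{\geq 0}$ is closed under extensions, it suffices to show $P_i\in\cD^{\geq 0}$ for every $i$.

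Because $\dim W_0^{(i)}=0$, Theorem~\ref{dimW=0 2} (together with Corollary~\ref{gamma p}) identifies $\gamma_{W_0^{(i)}}$ with the right derived functor of a left exact functor on $\Mod R$ which carries injective modules to injective modules; hence $\gamma_{W_0^{(i)}}$ sends $\cD^{\geq 0}$ into $\cD^{\geq 0}$, and it is enough to prove $Z_i\in\cD^{\geq 0}$ for all $i$. I would do this by induction on $i$: $Z_0=M\in\cD^{\geq 0}$, and given $Z_i\in\cD^{\geq 0}$, the triangle $\gamma_{W_0^{(i)}}Z_i\to Z_i\to Z_{i+1}\to\gamma_{W_0^{(i)}}Z_i[1]$ of Lemma~\ref{LC-like} shows $Z_{i+1}\in\cD^{\geq -1}$ with $H^{-1}(Z_{i+1})\cong\Ker\big(H^0(\gamma_{W_0^{(i)}}Z_i)\to H^0(Z_i)\big)$, so $Z_{i+1}\in\cD^{\geq 0}$ exactly when this kernel vanishes. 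By Theorem~\ref{dimW=0 2} and $Z_i\in\cD^{\geq 0}$ one has $H^0(\gamma_{W_0^{(i)}}Z_i)=\bigoplus_{\fp\in W_0^{(i)}}\Gamma_{V(\fp)}\Hom_R(R_\fp,H^0(Z_i))$, the map to $H^0(Z_i)$ being the sum of the component maps $\phi\mapsto\phi(1)$. So everything comes down to: this sum is injective.

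For a single $\fp\in W_0^{(i)}$ (put $N=H^0(Z_i)$): if $\fp$ is a maximal ideal then $\Gamma_{V(\fp)}\Hom_R(R_\fp,N)$ is just the $\fp$-power-torsion submodule of $N$ included into $N$, so there is nothing to do. If $\fp$ is not maximal, the kernel of $\phi\mapsto\phi(1)$ is the $\fp$-power-torsion part of $\Hom_R(R_\fp/R,N)$, which equals $\bigcup_k\Hom_R\big((R_\fp/R)\otimes_R R/\fp^k,(0:_N\fp^k)\big)$; as $(R_\fp/R)\otimes_R R/\fp^k\cong(R/\fp^k)_\fp/\Image(R/\fp^k)$ is a finite-length $(R/\fp^k)_\fp$-module, hence an iterated extension of copies of $\kappa(\fp)$, this kernel vanishes once $\Hom_R\big(\kappa(\fp),(0:_N\fp^k)\big)=0$ for all $k$. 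Granting injectivity of each component map, injectivity of the sum is formal: if $\sum_\fp\phi_\fp(1)=0$ with $\phi_{\fp_0}\neq 0$, then since $\fp\not\subseteq\fp_0$ for the other $\fp\in W_0^{(i)}$ each $\phi_\fp(1)$ ($\fp\ne\fp_0$) becomes $0$ after localizing at $\fp_0$, so some $s\notin\fp_0$ kills $\phi_{\fp_0}(1)$; then $s\phi_{\fp_0}$ has value $0$ at $1$, hence $s\phi_{\fp_0}=0$ by injectivity of the $\fp_0$-component, hence $\phi_{\fp_0}$ vanishes on $sR_{\fp_0}=R_{\fp_0}$, a contradiction.

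Thus the whole proof rests on the vanishing $\Hom_R\big(\kappa(\fp),(0:_N\fp^k)\big)=0$ for $N=H^0(Z_i)$ and every non-maximal $\fp\in W_0^{(i)}$, and this is the step I expect to be the real work. When $N$ is finitely generated it is standard: $(0:_N\fp^k)$ is finitely generated with support in $V(\fp)$, and a finitely generated module supported in $V(\fp)$ admits no nonzero map from $\kappa(\fp)=\mathrm{Frac}(R/\fp)$ when $\fp$ is not maximal (any such map would have finitely generated, divisible, hence zero, image over the domain $R/\fp$, by the determinant trick). But $N=H^0(Z_i)$ is not finitely generated, so one must track how it is assembled along the filtration: from the triangles, $H^0(Z_{i+1})$ is built by extensions out of a quotient of $H^0(Z_i)$ and the module $H^1(\gamma_{W_0^{(i)}}Z_i)=\bigoplus_{\fq\in W_0^{(i)}}H^1\big(\RGamma_{V(\fq)}\RHom_R(R_\fq,Z_i)\big)$, whose support lies in $\bigcup_{\fq\in W_0^{(i)}}V(\fq)$. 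The combinatorial crux is that a prime maximal in $W^{(i+1)}=W\setminus(W_0^{(0)}\cup\cdots\cup W_0^{(i)})$ contains no prime of any earlier layer $W_0^{(0)},\dots,W_0^{(i)}$; using this one propagates along the filtration the statement ``$\Hom_R(\kappa(\fp),H^0(Z_i))=0$ for every prime $\fp$ that is non-maximal and contains no prime of $W_0^{(0)}\cup\cdots\cup W_0^{(i-1)}$'', which covers exactly the primes of $W_0^{(i)}$. Making this support bookkeeping precise — i.e.\ checking that the quotients, submodules and extensions appearing in the filtration remain in the relevant class of modules at the relevant primes, using that the layers $W_0^{(j)}$ are pairwise disjoint — is the main obstacle; once it is in hand, the induction closes and $H^i(\gamma_W M)=0$ for all $i<0$.
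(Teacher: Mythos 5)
Your overall architecture is sound and genuinely different from the paper's: the paper reduces to $M=R/\fp$ by a prime filtration, runs a Noetherian induction on $\fp$, uses the minimal injective resolution of $\gamma_WR/\fp$ to force $\fp\in\supp\gamma_WR/\fp\subseteq W$, and then derives a contradiction from $\Hom_\cD(\kappa(\fp),\gamma_WR/\fp[\ell])\cong\Ext^\ell_R(\kappa(\fp),R/\fp)=0$ for $\ell<0$; it never attempts to control $\lambda_{W_0}M$ module-theoretically. Your layer decomposition, the chaining of the triangles from Theorem~\ref{theorem1}, the reduction to $Z_i\in\cD^{[0,+\infty]}$, and the identification of the obstruction with $\Ker\bigl(H^0(\gamma_{W_0^{(i)}}Z_i)\to H^0(Z_i)\bigr)$ are all correct. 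But the argument does not close, for two reasons.

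First, a local error: $(R_\fp/R)\otimes_RR/\fp^k\cong R_\fp/(R+\fp^kR_\fp)$ is not an $(R/\fp^k)_\fp$-module ($R+\fp^kR_\fp$ is not an $R_\fp$-submodule), and for $k=1$ it is $\kappa(\fp)/(R/\fp)$, a torsion divisible $R/\fp$-module, hence not a finite iterated extension of copies of $\kappa(\fp)$. This particular step is repairable: filtering $R_\fp/(R+\fp^kR_\fp)$ by the images of $\fp^jR_\fp$ exhibits it as an iterated extension of \emph{quotients} of finite-dimensional $\kappa(\fp)$-vector spaces, and since $\Hom_R(-,M')$ applied to a surjection is injective, the vanishing of $\Hom_R(\kappa(\fp),M')$ still kills all the relevant Homs. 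So your criterion for injectivity of the evaluation map is true, just not for the reason given.

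Second, and decisively: the induction hinges on propagating ``$\Hom_R(\kappa(\fp),H^0(Z_i))=0$ for the non-maximal $\fp\in W_0^{(i)}$'' from $i$ to $i+1$, and this is exactly the step you leave as ``the main obstacle.'' It is not a formality. From the long exact sequence of $\gamma_{W_0^{(i)}}Z_i\to Z_i\to Z_{i+1}\to\gamma_{W_0^{(i)}}Z_i[1]$, the module $H^0(Z_{i+1})$ is an extension of a submodule of $H^1(\gamma_{W_0^{(i)}}Z_i)=\bigoplus_{\fq}H^1\bigl(\RGamma_{V(\fq)}\RHom_R(R_\fq,Z_i)\bigr)$ by the \emph{cokernel} of $H^0(\gamma_{W_0^{(i)}}Z_i)\to H^0(Z_i)$. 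The condition $\Hom_R(\kappa(\fp),-)=0$ passes to submodules and extensions but \emph{not} to quotients (every module, $\kappa(\fp)$ included, is a quotient of a free module on which the condition holds), so your hypothesis on $H^0(Z_i)$ gives you nothing about the cokernel piece; and the $H^1$ piece is a local cohomology module of a completion-type, non-finitely-generated object for which no control is offered (Example~\ref{negative cohomology} shows such modules are genuinely wild). Your combinatorial observation that a prime maximal in $W^{(i+1)}$ contains no prime of the earlier layers is correct, but it is not by itself enough to produce the required $\Hom$-vanishing for either piece. Until that is supplied, the induction does not close, and the statement you would need is in fact stronger than the one you set up (one needs, at minimum, injectivity of $H^0(\gamma_{W_0^{(i)}}Z_i)\to H^0(Z_i)$ for the non-finitely-generated modules $H^0(Z_i)$, $i\geq 1$). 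I would recommend abandoning the module-level bookkeeping in favour of the paper's argument, whose key trick is that once $\fp\in W$ is known, adjunction gives $\RHom_R(\kappa(\fp),\gamma_WY)\cong\RHom_R(\kappa(\fp),Y)$ and the negative Ext of finitely generated modules vanishes for free.
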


\begin{proof}
First of all we note the following: 
Let   $0 \to N' \to N \to N'' \to 0$  be an exact sequence of finitely generated $R$-modules. 
If  $N$ is a counterexample to the theorem, then so is one of  $N'$  and $N''$.
   
Secondly we note that  a finitely generated $R$-module $M$ has a filtration
$$
0=M_0\subsetneq M_1\subsetneq \cdots \subsetneq M_n=M
$$
such that $M_{i+1}/M_i\cong R/\fp_i$  and  $\fp_i \in \Spec R$ for $0 \leq i < n$. 
It thus follows that we have sufficiently to prove that $H^i(\gamma_WR/\fp)=0$  for $i < 0$ and $\fp\in \Spec R$.

Now supposed  that there would exist $\fp \in \Spec R$ with $H^i(\gamma_W R/ \fp) \neq 0$ for some $i<0$, and take a maximal   $\fp$  among such prime ideals.
Then it is easy to see from the remark made in the first part of this proof that the theorem is true for  $M = R/I$ for all $I\supsetneq \fp$. 
 
The key for the proof is Corollary \ref{[a,b]} which states that  $\gamma_W R/ \fp \in \cD^+$. 
This exactly means there is the least integer $\ell < 0$ with  $H^\ell(\gamma_W R / \fp ) \neq 0$.
Let $x$  be an arbitrary element of $R$ with  $x \not\in \fp$. 
Apply the functor $\gamma_W$ to the exact sequence
$$0 \to R/  \fp  \xrightarrow{x} R/ \fp \to R / (\fp+(x)) \to 0,$$
and we obtain an isomorphism  $H^{\ell}(\gamma_W R / \fp ) \xrightarrow{x} H^{\ell} (\gamma_W R / \fp)$.
Since every element from  $R \ \backslash \ \fp$ acts bijectively on  $H^\ell(\gamma_WR/ \fp)$,  consequently $H^\ell(\gamma_WR / \fp)$ is a $\kappa(\fp)$-vector space.
Now let  $I$  be a minimal injective resolution of $\gamma _W R/ \fp$. 
By our choice of integer  $\ell$, we see that $I$ starts from the $\ell$th term, being of the form 
$$
\begin{CD} 
0 @>>> I^{\ell}  @>{\partial^{\ell}}>> I^{\ell+1} @>{\partial^{\ell+1}}>> I^{\ell+2} @>{\partial^{\ell+2}}>> \cdots.
\end{CD}
$$
Note that  $\Ker  \partial^{\ell} = H^\ell(\gamma_WR/\fp)$. 
Since  $I$ is a minimal injective resolution, $I^{\ell}$  is an essential extension of $H^{\ell} (\gamma_WR/ \fp)$ that is a $\kappa (\fp)$-module. 
Therefore $I^{\ell}$ must be isomorphic to a direct sum of copies of $E_R(R / \fp)$.
This forces that  $\Gamma _{V(\fp)}(I _{\fp})$ has a nontrivial cohomology in degree $\ell$, thus it follows from 
 Lemma \ref{Foxby-Iyengar} that $\fp \in \supp I = \supp \gamma _W R/\fp \subseteq W$. 
 Since we have shown that $\fp \in W$, the following isomorphisms hold;  
$$
0 \neq \Hom_\cD(\kappa(\fp), \gamma_WR/\fp [\ell]) \cong \Hom_\cD (\kappa(\fp), R/\fp[\ell]) \cong \Ext^{\ell}_R (\kappa (\fp),R / \fp),
$$
the last of which must be zero since $\ell <0$. 
By this contradiction we complete the proof.
\end{proof}

Let $a, b\in \mathbb{Z}\cup \{ \pm\infty\}$ with $a\leq b$.  We denote by $\cD^{[a,b]}$ for the full subcateogy of $\cD$ consisting of complexes 
$X$ such that $H^i(X)=0$ for $i\notin [a, b]$ (cf. \cite[Notation 13.1.11]{KS}). Moreover, we write $\cD^{[a,b]}_{\rm fg}=\cD^{[a,b]}\cap \cD_{\rm fg}$. 
For $X\in \cD$, we denote by $\inf X$ the infimum of the set $\Set{i\in \mathbb{Z} | H^i(X)\neq 0}$.

\begin{corollary}\label{negative vanishing 2}
Let $X\in \cD_{\rm fg}$ and $W$ be a subset of $\Spec R$. We assume that $\dim W$ is finite. Then we have $\inf X\leq \inf \gamma_WX$.
\end{corollary}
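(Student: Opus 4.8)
The plan is to prove the equivalent statement that, writing $n=\inf X$, one has $H^i(\gamma_WX)=0$ for every $i<n$. If $X\cong 0$ or $\inf X=-\infty$ there is nothing to show, so I may assume $n\in\Z$, i.e. $X\in\cD^{[n,+\infty]}_{\rm fg}$. The argument proceeds in two stages: first I would reduce to the case of a bounded complex by truncating $X$ from above and bounding the truncated tail by means of Corollary \ref{[a,b]}; then I would treat a bounded complex by d\'evissage along its cohomology modules, where the module case is precisely Proposition \ref{negative vanishing}.

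For the first stage, set $n'=\dim W$, which is finite by hypothesis, and for an integer $m\ge n$ consider the truncation triangle
$$
\sigma_{\le m}X\longrightarrow X\longrightarrow \sigma_{>m}X\longrightarrow(\sigma_{\le m}X)[1].
$$
Since $\sigma_{>m}X\in\cD^{[m+1,+\infty]}\subseteq\cD^+$, Corollary \ref{[a,b]} gives $\gamma_W\sigma_{>m}X\in\cK^{[m+1-n',+\infty]}$, so that $H^j(\gamma_W\sigma_{>m}X)=0$ whenever $j<m+1-n'$. Applying $\gamma_W$ to the triangle and reading the long exact cohomology sequence, I obtain $H^i(\gamma_W\sigma_{\le m}X)\cong H^i(\gamma_WX)$ for all $i<m+1-n'$. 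Choosing $m$ sufficiently large, this isomorphism holds for every $i<n$, and since $\sigma_{\le m}X\in\cD^{[n,m]}_{\rm fg}$, it suffices to prove the corollary for such bounded complexes.

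For the second stage I would show, by induction on the amplitude $b-a$, that $H^i(\gamma_WZ)=0$ for all $i<a$ whenever $Z\in\cD^{[a,b]}_{\rm fg}$ with $a\le b$ in $\Z$. When $a=b$, $Z$ is isomorphic to $H^a(Z)[-a]$ with $H^a(Z)$ a finitely generated module, and Proposition \ref{negative vanishing} gives $H^i(\gamma_WZ)\cong H^{i-a}(\gamma_WH^a(Z))=0$ for $i<a$. When $a<b$, I would use the truncation triangle $H^a(Z)[-a]\to Z\to\sigma_{>a}Z\to H^a(Z)[-a+1]$: Proposition \ref{negative vanishing} kills the cohomology of $\gamma_WH^a(Z)[-a]$ in degrees $<a$, the inductive hypothesis applied to $\sigma_{>a}Z\in\cD^{[a+1,b]}_{\rm fg}$ kills that of $\gamma_W\sigma_{>a}Z$ in degrees $<a+1$, and the long exact sequence then forces $H^i(\gamma_WZ)=0$ for $i<a$. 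Taking $Z=\sigma_{\le m}X$ and $a=n$ finishes the proof.

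I do not expect a genuine obstacle in this argument: all the substance is already contained in Proposition \ref{negative vanishing} (the module case) and in Corollary \ref{[a,b]} (finiteness of $\dim W$ controlling how far downward $\gamma_W$ can move cohomology), and what remains is the routine propagation of these facts through truncation triangles. The only point requiring a little care is keeping the numerical bounds consistent when choosing $m$ in the first stage, so that the comparison isomorphism $H^i(\gamma_W\sigma_{\le m}X)\cong H^i(\gamma_WX)$ is available in the whole range $i<n$.
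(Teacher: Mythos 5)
Your proposal is correct and follows essentially the same route as the paper: truncate $X$ using Corollary \ref{[a,b]} to control the tail $\sigma_{>m}X$, and handle the bounded piece via Proposition \ref{negative vanishing}. The only difference is that you spell out explicitly the d\'evissage along cohomology modules needed to pass from the module case of Proposition \ref{negative vanishing} to a bounded complex, a step the paper leaves implicit.
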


\begin{proof}
We may assume that $\inf X>-\infty$.
Thus it is enough to show that if $X\in \cD^{[0,+\infty]}_{\rm fg}$, then $\gamma_WX\in \cD^{[0,+\infty]}$. To see this, setting $n=\dim W$, we consider the triangle $\sigma_{\leq n}X\to X\to \sigma_{> n}X\to (\sigma_{\leq n}X)[1]$.
Since $\sigma_{\leq n}X\in \cD^{[0,n]}_{\rm fg}$, by Proposition \ref{negative vanishing}, we can show that $\gamma_W (\sigma_{\leq n}X)\in \cD^{[0,+\infty]}$. Furthermore, it follows from Corollary \ref{[a,b]} that $\gamma_W (\sigma_{> n}X)\in \cD^{[0,+\infty]}$.
Thus we can conclude that $\gamma_WX\in \cD^{[0,+\infty]}$ by the triangle.
\end{proof}

Let $X\in \cD$. For an ideal $\fa$ of $R$, 
we define the $\fa$-depth of $X$ as
$\depth(\fa, X)=\inf \RHom_R(R/\fa,X)$.
Let $\boldsymbol{x}=\{x_1,\ldots,x_n\}$ be a system of generators of $\fa$.
For each $x_i$, 
$K(x_i)$ denotes the complex $(0\to R\xrightarrow{x_i}R\to 0)$ concentrated in degrees $-1$ and $0$.
The Koszul complex with respect to $\boldsymbol{x}$ is the complex $K(\boldsymbol{x})=K(x_1)\otimes_R\cdots\otimes_RK(x_n)$.
By \cite[Theorem 2.1]{FI}, it holds that
$\depth(\fa, X) =\inf\Hom_R(K(\boldsymbol{x}),X)=\inf \RGamma_{V(\fa)}X$.

If $W$ is a specialization-closed subset of $\Spec R$, 
then we define the $W$-depth of $X$, which we denote by $\depth(W, X)$, as the infimum of the set of values $\depth(\fa, X)$ for all ideals $\fa$ with $V(\fa)\subseteq W$. It is easily seen that $\depth(W, X)=\inf \RGamma_WX$.

\begin{proposition}\label{depth corollary}
Let $W$ be a subset of $\Spec R$, and assume that $\dim W$ is finite.
Let $X\in \cD_{\rm fg}$.
Then we have 
$\depth(\overline{W}^s, X)\leq \inf \gamma_W X$.
% that is, $\inf\RGamma_{\overline{W}^s}X\leq \inf \gamma_W X$.
\end{proposition}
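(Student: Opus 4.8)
The plan is to reduce the statement to a finiteness/duality argument via the Local Duality Principle, by first handling $X\in\cD_{\mathrm{fg}}$ through a truncation, and then treating the crucial case $X=R/\fp$ for a prime $\fp$. First I would reduce to showing that if $X\in\cD^{[0,+\infty]}_{\mathrm{fg}}$ and $\depth(\overline{W}^s,X)\geq 0$, then $\gamma_WX\in\cD^{[0,+\infty]}$; the general statement follows by shifting and by the usual truncation triangle $\sigma_{\leq n}X\to X\to\sigma_{>n}X\to(\sigma_{\leq n}X)[1]$ with $n=\dim W$, using Corollary~\ref{[a,b]} to control $\gamma_W(\sigma_{>n}X)$ and Proposition~\ref{negative vanishing} together with the hypothesis on the bounded piece $\sigma_{\leq n}X$. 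Note $\depth(\overline{W}^s,X)=\inf\RGamma_{\overline{W}^s}X$, so the hypothesis says $\RGamma_{\overline{W}^s}X\in\cD^{[0,+\infty]}$.

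The heart of the argument is then to show: if $\inf\RGamma_{\overline{W}^s}X\geq 0$ and $X$ is a finitely generated module (or, after a prime filtration, $X=R/\fp$), then $H^i(\gamma_WX)=0$ for $i<0$. I would run the same filtration-and-maximal-counterexample machinery as in Proposition~\ref{negative vanishing}: using the prime filtration of a finitely generated module and the observation that a counterexample in a short exact sequence propagates to one of the outer terms, it suffices to treat $X=R/\fp$, and among all $\fp$ with $H^i(\gamma_W R/\fp)\neq 0$ for some $i<0$ one picks a maximal such $\fp$. By Corollary~\ref{[a,b]}, $\gamma_W R/\fp\in\cD^+$, so there is a least integer $\ell$ with $H^\ell(\gamma_W R/\fp)\neq 0$; multiplication by any $x\notin\fp$ induces an isomorphism on $H^\ell(\gamma_W R/\fp)$ (using the exact sequence $0\to R/\fp\xrightarrow{x}R/\fp\to R/(\fp+(x))\to 0$ and maximality of $\fp$), so $H^\ell(\gamma_W R/\fp)$ is a $\kappa(\fp)$-vector space. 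Looking at a minimal injective resolution of $\gamma_W R/\fp$, its bottom term $I^\ell$ is a direct sum of copies of $E_R(R/\fp)$, whence $\fp\in\supp\gamma_W R/\fp\subseteq W\subseteq\overline{W}^s$. Now comes the new input compared with Proposition~\ref{negative vanishing}: since $\fp\in\overline{W}^s$ we have $V(\fp)\subseteq\overline{W}^s$, so
$$
\Ext^\ell_R(\kappa(\fp),R/\fp)\cong\Hom_{\cD}(\kappa(\fp),R/\fp[\ell])\cong\Hom_{\cD}(\kappa(\fp),\gamma_W R/\fp[\ell])\neq 0
$$
forces $\ell\geq\depth(V(\fp),R/\fp)$; but I claim $\depth(V(\fp),R/\fp)\geq\depth(\overline{W}^s,R/\fp)\geq 0$, contradicting $\ell<0$. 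The inequality $\depth(V(\fp),R/\fp)\geq\depth(\overline{W}^s,R/\fp)$ holds because $V(\fp)\subseteq\overline{W}^s$, and $W\mapsto\depth(W,X)$ is antitone in $W$ for specialization-closed $W$ since $\RGamma_W$ is built from the $\RGamma_{V(\fa)}$ with $V(\fa)\subseteq W$.

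The main obstacle I anticipate is making precise and correct the passage from the hypothesis $\depth(\overline{W}^s,X)\geq 0$ on the (possibly unbounded) complex $X$ down to the module-level statement needed to run the maximal-counterexample argument: the prime filtration is only available for a single finitely generated module, not for an arbitrary $X\in\cD_{\mathrm{fg}}$, so I must first truncate to land in $\cD^{[0,n]}_{\mathrm{fg}}$ and then induct on the length of $X$ (number of nonzero cohomology modules) using truncation triangles, each time invoking that $\depth(\overline{W}^s,-)\geq 0$ is inherited by the relevant subquotients — this last point needs the long exact sequence for $\RGamma_{\overline{W}^s}$ and a small bookkeeping argument. A secondary subtlety is justifying $H^\ell(\gamma_W R/\fp)\cong\Hom_{\cD}(\kappa(\fp),\gamma_W R/\fp[\ell])$ up to the $\kappa(\fp)$-structure and checking the $E_R(R/\fp)$-identification of $I^\ell$ carefully; these are exactly the steps carried out in the proof of Proposition~\ref{negative vanishing}, so I would cite that argument rather than repeat it. Everything else is formal manipulation with triangles and with the functors $\gamma_W$, $\RGamma_{\overline{W}^s}$ already set up in the earlier sections.
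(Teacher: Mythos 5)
There is a genuine gap, and it sits exactly where you locate ``the main obstacle'': the depth hypothesis does not survive your reductions, and without it the argument only reproves Proposition \ref{negative vanishing} rather than the stronger bound. Concretely: after shifting so that $\depth(\overline{W}^s,X)\geq 0$, the complex $X$ is in general \emph{not} in $\cD^{[0,+\infty]}$ (a module $M$ with $\depth(\overline{W}^s,M)=d>0$ becomes $M[d]$, concentrated in degree $-d$), so the case you reduce to --- $X\in\cD^{[0,+\infty]}_{\rm fg}$ implies $\gamma_WX\in\cD^{[0,+\infty]}$ --- is literally Corollary \ref{negative vanishing 2} and never touches the range $\inf X\leq i<\depth(\overline{W}^s,X)$ where the proposition has content. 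The prime-filtration/maximal-counterexample step then breaks down because $\depth(\overline{W}^s,-)$ is not inherited by the subquotients $R/\fp_j$ of a prime filtration (depth can drop), so ``one of the outer terms is again a counterexample'' fails for the class of counterexamples you need; this is not a small bookkeeping issue with the long exact sequence. Finally, the intended new contradiction is vacuous: $\depth(V(\fp),R/\fp)=0$ always, since $\Hom_R(R/\fp,R/\fp)=R/\fp\neq 0$, so the inequality $\ell\geq\depth(V(\fp),R/\fp)$ yields only $\ell\geq 0$, i.e.\ exactly Proposition \ref{negative vanishing} again.

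The paper's proof avoids all of this with a short trick worth noting. Since $\gamma_WX\in\cL_W\subseteq\cL_{\overline{W}^s}$, Remark \ref{inclusion} (i) gives $\gamma_WX\cong\RGamma_{\overline{W}^s}(\gamma_WX)$, hence $\inf\gamma_WX=\depth(\overline{W}^s,\gamma_WX)$; so it suffices to show that depth does not decrease under $\gamma_W$. For an ideal $\fa$ with $V(\fa)\subseteq\overline{W}^s$, generated by $\boldsymbol{x}$, one has $\depth(\fa,-)=\inf\Hom_R(K(\boldsymbol{x}),-)$, and since $K(\boldsymbol{x})$ is a bounded complex of finitely generated free modules, $\gamma_W$ commutes with $\Hom_R(K(\boldsymbol{x}),-)$. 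Applying Corollary \ref{negative vanishing 2} to the complex $\Hom_R(K(\boldsymbol{x}),X)\in\cD_{\rm fg}$ --- rather than to $X$ itself --- gives $\depth(\fa,X)\leq\depth(\fa,\gamma_WX)$, and taking the infimum over such $\fa$ finishes the proof. Feeding the Koszul dual of $X$ into the already-established vanishing statement is the idea missing from your proposal.
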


\begin{proof}
By Remark \ref{inclusion} (i), it holds that
$\gamma_WX\cong \RGamma_{\overline{W}^s}(\gamma_WX)$.
Therefore we have $$\inf \gamma_WX=\inf \RGamma_{\overline{W}^s}(\gamma_WX)=\depth(\overline{W}^s, \gamma_WX).$$ 
Hence it remains to show that 
$\depth(\overline{W}^s, X)\leq \depth(\overline{W}^s, \gamma_WX)$.
Let $\fa$ be an ideal of $R$ with $V(\fa)\subseteq\overline{W}^s$ and $\boldsymbol{x}=\{x_1,\ldots,x_n\}$ be a system of generators of $\fa$.
Since $K(\boldsymbol{x})$ is a bounded complex of finitely generated free $R$-modules, it follows from Corollary \ref{negative vanishing 2} that
$$\inf\Hom_R(K(\boldsymbol{x}),X)\leq
\inf \gamma_W\Hom_R(K(\boldsymbol{x}),X)=\inf\Hom_R(K(\boldsymbol{x}),\gamma_WX).$$
Thus we have $\depth(\fa,X)\leq  \depth(\fa,\gamma_WX)$. 
Hence it holds that $\depth(\overline{W}^s,X)\leq  \depth(\overline{W}^s,\gamma_WX)$ by definition.
\end{proof}\vspace{1mm}

This proposition states that an inequality
$$\inf \RGamma_{\overline{W}^s}X\leq \inf \gamma_{W} X.$$
holds for a subset $W$ of $\Spec R$ with $\dim W<+\infty$ and $X\in \cD_{\rm fg}$.\\
 
Let $X\in \cD_{\rm fg}$. 
We denote by $\dim X$ the supremum of the set $\Set{\dim H^i(X)+i |  i\in \mathbb{Z}}$, see \cite[\S 3]{F}.
Note that $\dim X[1]=\dim X-1$. 
Let $D_R$ be a dualizing complex of $R$ with $D_R\in \cD^{[0,d]}_{\rm fg}$, where $d=\dim R$.
In the proof of the next proposition, we use a basic fact that 
$X^{\dag}\in \cD_{\rm fg}^{[d-n,+\infty]}$, where $n=\dim X$.
To show this, we first suppose that $X$ is a finitely generated $R$-module. 
Then it is straightforward to see that $X^\dag\in \cD^{[d-n,d]}_{\rm fg}\subset \cD_{\rm fg}^{[d-n,+\infty]}$.
Next, supppse that $X$ is any complex of $\cD_{\rm fg}$. Notice that it suffices to treat the case that $n=\dim X=0$.
Then, using the triangle 
$\sigma_{\leq -d}X\to X\to \sigma_{>-d}X\to (\sigma_{\leq -d}X)[1]$, one can deduce that $X^{\dag}\in \cD_{\rm fg}^{[d,+\infty]}$ as desired. 
This fact is essentially proved in \cite[Proposition 3.14 (d)]{F}. 

\begin{proposition}\label{vanishing dim M}
Assume that $R$ admits a dualizing complex. 
Let $W$ be a subset of $\Spec R$ and $X\in \cD_{\rm fg}$. Then $ H^i ( \gamma_W X )=0$ for all  $i> \dim X$.
\end{proposition}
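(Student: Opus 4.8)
The plan is to derive the vanishing from the Local Duality Principle together with the finiteness results of Section~3. We may assume $n := \dim X$ is an integer: if $\dim X = +\infty$ the assertion is vacuous, and if $X = 0$ it is trivial. Since $R$ admits a dualizing complex, $d := \dim R$ is finite, and hence $\dim W$ is finite as well (indeed $\dim W \le d$).

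First I would invoke Corollary~\ref{LDT}: with $D_R \in \cD^{[0,d]}_{\rm fg}$ fixed as above and $X^\dagger = \RHom_R(X, D_R)$, there is a natural isomorphism $\gamma_W X \cong \RHom_R(X^\dagger, \gamma_W D_R)$. I would then supply two inputs. By the fact recorded just before the statement, $X^\dagger \in \cD^{[d-n,+\infty]}_{\rm fg}$, so $X^\dagger$ is isomorphic in $\cD$ to a complex $Z$ (a good truncation in degrees $\ge d-n$) with $Z^k = 0$ for $k < d-n$. Moreover, the normalized dualizing complex $D_R \in \cD^{[0,d]}_{\rm fg}$ is isomorphic in $\cD$ to a bounded complex of injective $R$-modules concentrated in cohomological degrees $[0,d]$, so Corollary~\ref{[a,b]}, applied with $a = 0$ and $b = d$, shows that $\gamma_W D_R$ is isomorphic in $\cD$ to a bounded complex $J$ of injective $R$-modules with $J^k = 0$ for $k > d$. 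Being a bounded complex of injectives, $J$ is $K$-injective.

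The remaining step is an amplitude estimate. As $J$ is $K$-injective, $\RHom_R(X^\dagger, \gamma_W D_R)$ is represented by the total Hom complex $\Hom^\bullet_R(Z, J)$, whose degree-$i$ term is $\prod_{k \in \Z} \Hom_R(Z^k, J^{k+i})$. Suppose $i > n$. If $\Hom_R(Z^k, J^{k+i}) \ne 0$, then $Z^k \ne 0$ forces $k \ge d-n$, while $J^{k+i} \ne 0$ forces $k+i \le d$; together these give $i \le d - k \le n$, a contradiction. Hence $\Hom^\bullet_R(Z, J)^i = 0$ for all $i > n$, and therefore $H^i(\gamma_W X) \cong H^i(\RHom_R(X^\dagger, \gamma_W D_R)) = 0$ for every $i > n = \dim X$.

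No serious obstacle is expected: the substantive content is already packaged in Corollary~\ref{LDT} (the Local Duality Principle) and Corollary~\ref{[a,b]} (the finite injective amplitude of $\gamma_W D_R$). The only points demanding a little care are the verification that $\gamma_W D_R$ has injective amplitude bounded above by $d$ — which rests on the normalization of $D_R$ and on $\dim W \le d < +\infty$ — and the check that the truncated representative $Z$ of $X^\dagger$ together with the $K$-injectivity of $J$ genuinely computes $\RHom_R(X^\dagger, \gamma_W D_R)$ as a plain total Hom complex; both are routine.
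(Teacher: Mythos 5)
Your argument is correct and is essentially the paper's own proof: both pass through Corollary \ref{LDT} to write $\gamma_W X\cong \RHom_R(X^\dagger,\gamma_W D_R)$, use Corollary \ref{[a,b]} to replace $\gamma_W D_R$ by a bounded complex of injectives vanishing in degrees $>d$, and use $X^\dagger\in\cD^{[d-n,+\infty]}_{\rm fg}$ to conclude by a degree count on chain maps. Your version merely makes the amplitude estimate explicit via the total Hom complex, which the paper leaves as "represented by a chain map."
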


\begin{proof}
Let $D_R$ be a dualizing complex with $D_R\in \cD^{[0,d]}_{\rm fg}$, where $d=\dim R$.
By Corollary \ref{[a,b]}, $\gamma_W D_R$ is isomorphic to a bounded complex $I$ of injective $R$-modules with $I^i=0$ for $i>d$.
Then, by Corollary \ref{LDT}, there are isomrophisms 
$$
\gamma_W X \cong  \RHom_R(X^\dag, \gamma_W D_R)\cong \Hom_R(X^\dag, I).
$$
Therefore each element of $H^i(\gamma_WX) \cong \Hom_\cD (X^\dag,I[i])$ is represented by a chain map from   
$X^\dag$  to  $I[i]$.
Moreover, setting $n=\dim X$, we have $X^\dag\in \cD^{[d-n, +\infty]}_{\rm fg}$ by the above-mentioned fact.
Hence it holds that $H^i(\gamma_WX)=0$ for all $i>n=\dim X$.
 \end{proof}

\color{black}
We sum up Proposition \ref{depth corollary} and Proposition \ref{vanishing dim M} in the following theorem.

\begin{theorem}\label{GVT}
Assume that $R$ admits a dualizing complex. 
Let $W$ be a subset of $\Spec R$. 
If $X\in \cD_{\rm fg}$, then $ H^i(\gamma_WX)=0$ unless $\depth(\overline{W}^s, X) \leq i \leq \dim X$.
\end{theorem}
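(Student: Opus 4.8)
The plan is to assemble Theorem~\ref{GVT} directly from the two propositions just proved, treating the lower bound and the upper bound for the non-vanishing range separately. First I would recall that Theorem~\ref{GVT} asserts $H^i(\gamma_W X)=0$ whenever $i$ lies outside the interval $[\depth(\overline{W}^s, X),\ \dim X]$, so it suffices to rule out two regimes: $i < \depth(\overline{W}^s, X)$ and $i > \dim X$. The upper regime is handled verbatim by Proposition~\ref{vanishing dim M}, which requires only that $R$ admit a dualizing complex and gives $H^i(\gamma_W X)=0$ for all $i > \dim X$, with no finiteness hypothesis on $\dim W$.

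Next I would dispose of the lower regime. Here I want to invoke Proposition~\ref{depth corollary}, which states $\depth(\overline{W}^s, X) \leq \inf \gamma_W X$, and $\inf\gamma_W X$ is by definition the least $i$ with $H^i(\gamma_W X)\neq 0$; hence $H^i(\gamma_W X)=0$ for every $i < \depth(\overline{W}^s, X)$. The subtlety is that Proposition~\ref{depth corollary} carries the standing assumption $\dim W < +\infty$, whereas the statement of Theorem~\ref{GVT} does not mention $\dim W$. The reconciliation is that when $R$ admits a dualizing complex, $\dim R < +\infty$, so $\dim W \leq \dim R < +\infty$ automatically for every subset $W\subseteq\Spec R$; thus the hypothesis of Proposition~\ref{depth corollary} is satisfied for free. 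I would state this observation explicitly, since it is the one place where the two propositions do not literally line up with the theorem as phrased.

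Combining the two: for $i$ outside $[\depth(\overline{W}^s, X),\ \dim X]$ we either have $i < \depth(\overline{W}^s, X)$, handled by Proposition~\ref{depth corollary} together with the finiteness of $\dim R$, or $i > \dim X$, handled by Proposition~\ref{vanishing dim M}. In both cases $H^i(\gamma_W X)=0$, which is exactly the claim. I do not anticipate a genuine obstacle here; the proof is a two-line synthesis. The only point that needs care — and the only thing I would flag in writing — is the bookkeeping about why $\dim W$ is automatically finite under the dualizing-complex hypothesis, so that Proposition~\ref{depth corollary} applies without an extra assumption. Everything else is immediate from the definitions of $\inf$, $\dim X$, and $\depth(\overline{W}^s, X)$ recalled earlier in the section.
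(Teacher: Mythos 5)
Your proof is correct and is exactly the paper's argument: the theorem is stated there as a summation of Proposition~\ref{depth corollary} (for $i<\depth(\overline{W}^s,X)$) and Proposition~\ref{vanishing dim M} (for $i>\dim X$). Your explicit remark that a dualizing complex forces $\dim R<+\infty$, hence $\dim W<+\infty$, correctly supplies the hypothesis of Proposition~\ref{depth corollary}, a point the paper leaves implicit.
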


%%%%%%%%%%%%%%%%%%%%%%%%%%%%%%%%%
%%%%%%%%%%%%%%%%%%%%%%%%%%%%%%%%%
%%%%%%%%%%%%%%%%%%%%%%%%%%%%%%%%%
%%%%%%%%%%%%%%%%%%%%%%%%%%%%%%%%%
%%%%%%%%%%%%%%%%%%%%%%%%%%%%%%%%%
%%%%%%%%%%%%%%%%%%%%%%%%%%%%%%%%%
%%%%%%%%%%%%%%%%%%%%%%%%%%%%%%%%%
%%%%%%%%%%%%%%%%%%%%%%%%%%%%%%%%%
%%%%%%%%%%%%%%%%%%%%%%%%%%%%%%%%%
%%%%%%%%%%%%%%%%%%%%%%%%%%%%%%%%%
%%%%%%%%%%%%%%%%%%%%%%%%%%%%%%%%%
%%%%%%%%%%%%%%%%%%%%%%%%%%%%%%%%%
%%%%%%%%%%%%%%%%%%%%%%%%%%%%%%%%%
%%%%%%%%%%%%%%%%%%%%%%%%%%%%%%%%%
%%%%%%%%%%%%%%%%%%%%%%%%%%%%%%%%%
%%%%%%%%%%%%%%%%%%%%%%%%%%%%%%%%%
%%%%%%%%%%%%%%%%%%%%%%%%%%%%%%%%%

\bibliographystyle{amsplain}

\end{document}